\documentclass[a4paper,11pt]{amsart}
\linespread{1.1}
\usepackage{amsmath}
\usepackage{mathtools}
\usepackage{graphicx}
\usepackage{mathrsfs}
\usepackage{natbib}
\usepackage[breaklinks=true]{hyperref}
\usepackage{breakcites}

\usepackage[left=2.80cm,right=2.80cm]{geometry}  
\textheight = 23cm

\newtheorem{theorem}{Theorem}[section]
\newtheorem{lemma}[theorem]{Lemma}
\newtheorem{corollary}[theorem]{Corollary}
\newtheorem{prop}[theorem]{Proposition}

\theoremstyle{definition}

\theoremstyle{remark}
\newtheorem{remark}[theorem]{Remark}
{\theoremstyle{plain}\newtheorem{assumption}{Assumption}}

\numberwithin{equation}{section}


\newcommand{\real}{\mathbb{R}}
\newcommand{\E}{{\rm E}}
\newcommand{\dd}{{\rm d}}
\newcommand{\normal}{{\rm N}}
\newcommand{\eps}{\varepsilon}
\newcommand{\Uscr}{\mathscr{U}}

\newcommand{\cov}{{\rm Cov}}

\newcommand{\abs}[1]{\lvert#1\lvert} 
\newcommand{\norm}[1]{\lVert#1\rVert} 

\begin{document}

\title[]{The standard cure model with a linear hazard}

\author[]{Emil Aas Stoltenberg}
\address{}
\curraddr{Sofies gate 75A, 0454 Oslo, Norway}
\email{emilstoltenberg@gmail.com}
\thanks{I am grateful to Sven Ove Samuelsen for mentioning cure models with linear hazards for me;  to Per~A.~Mykland for teaching me the tools used to solve the problem of this paper, and for constructively criticising my use of them; and to Nils Lid Hjort for reading the manuscript and making me aware of my hazardous shortcuts. I would also like to thank the United States National Science Foundation under grants DMS 17-13118 and DMS-2015530 (Lan Zhang), and DMS 17-13129 and DMS-2015544 (Per A.~Mykland), for the support during my stay in Chicago (2018--2019) where parts of the research for this technical report were carried out.}


\keywords{Aalen{'}s additive hazard; contiguity; counting processes; cure models; EM-algorithm; likelihood ratio; locally constant; measure change; nonparametric; semiparametric.}

\date{\today}

\dedicatory{\sc Department of Mathematics, University of Oslo}

\begin{abstract}
In this paper we introduce a mixture cure model with a linear hazard rate regression model for the event times. Cure models are statistical models for event times that take into account that a fraction of the population might never experience the event of interest, this fraction is said to be {`}cured{'}. The population survival function in a mixture cure model takes the form $S(t) = 1 - \pi + \pi\exp(-\int_0^t\alpha(s)\,\dd s)$, where $\pi$ is the probability of being susceptible to the event under study, and $\alpha(s)$ is the hazard rate of the susceptible fraction. We let both $\pi$ and $\alpha(s)$ depend on possibly different covariate vectors $X$ and $Z$. The probability $\pi$ is taken to be the logistic function $\pi(X^{\prime}\gamma) = 1/\{1+\exp(-X^{\prime}\gamma)\}$, while we model $\alpha(s)$ by Aalen{'}s linear hazard rate regression model. This model postulates that a susceptible individual has hazard rate function $\alpha(t;Z) = \beta_0(t) + \beta_1(t)Z_1 + \cdots + Z_{q-1}\beta_{q-1}(t)$ in terms of her covariate values $Z_1,\ldots,Z_{q-1}$. An EM-algorithm for estimating $\gamma$ and the cumulatives $\int_0^{t}\beta_1(s)\,\dd s,\ldots,\int_0^{t}\beta_{q-1}(s)\,\dd s$ is introduced. The large-sample properties of these estimators are studied by way of parametric models that tend to a semiparametric model as a parameter $K \to \infty$. For each model in the sequence of parametric models, we assume that the data generating mechanism is parametric, thus simplifying the derivation of the estimators, as well as the proofs of consistency and limiting normality. Finally, we use contiguity techniques to switch back to assuming that the data stem from the semiparametric model. This technique for deriving and studying estimators in non- and semiparametric settings has previously been studied and employed in the high-frequency data literature, but seems to be novel in survival analysis.     
\end{abstract}

\maketitle

\section{Introduction}\label{sec::intro} Cure models are statistical models for event times that take into account that a fraction of the population might never experience the event of interest. This fraction of the population is referred to as cured, or nonsusceptible. In the most common cure model construction the hazard rate takes the form   
\begin{equation}
\alpha(t,U) = U \alpha(t) ,
\label{eq::curehazard}
\end{equation}    
where $U$ is a Bernoulli random variable with success probability $0 < \pi < 1$. See~\citet{amico2018cure} for a review of the cure model literature. Under~\eqref{eq::curehazard} a random survival time can be thought of as either stemming from a proper survival distribution with hazard rate $\alpha(t)$, with probability $\pi$; or as being constant and equal to infinity, with probability $1 - \pi$. This model is therefore known as the mixture cure model. The survival function $S(t) = \exp(-\int_0^{t}\alpha(s)\,\dd s)$ only applies to the susceptible fraction of the population. The survival function for the entire population is
\begin{equation}
\E\,\exp(-U\int_0^{t}\alpha(s)\,\dd s) = 1 - \pi + \pi \exp(- \int_0^t \alpha(s)\,\dd s).
\notag
\end{equation}    
Assuming that $\exp(- \int_0^t \alpha(s)\,\dd s)$ is a proper survival function, which we do, we see that the population survival function $\E\,\exp(-U\int_0^{t}\alpha(s)\,\dd s)$ tends to $1 - \pi > 0$ as $t\to \infty$, which means that it is improper. In the cure model literature $\pi$ is often called the {\it incidence} part of the model, while the hazard rate and related (survival) quantities are said to belong to the {\it latency} part of the model. We adopt this terminology in the following. Usually, both parts of the model are made to depend on covariates, the most common choice being a logistic specification for the incidence part, and a Cox regression model for the latency part, that is $\pi(X_i^{\prime}\gamma) = 1/\{1 + \exp(-X_i^{\prime}\gamma)\}$ and $\alpha_{i}(t) = \alpha_0(t) \exp(Z_i^{\prime}\beta)$, in terms of the covariate vectors $X_i$ and $Z_i$. The unknown parameters of this model are $\beta$, $\gamma$, and $A_0(t) = \int_0^t\alpha_0(s)\,\dd s$, and these are estimated from the data. Methods for estimating the parameters $\beta$, $\gamma$, and $A_0(t)$ were developed by~\citet{sy2000estimation}, and by~\citet{peng2000nonparametric}; while the asymptotic properties of these estimators were studied by~\citet{fang2005maximum} and by~\citet{lu2008maximum}, building on the work of~\citet{murphy1994consistency,murphy1995asymptotic} for the gamma frailty model. 

In this paper we introduce a mixture cure model with a linear hazard rate regression model for the susceptible fraction of the population, and a logistic regression model for the incidence part. This means that in our model the $i${'}th individual has survival function  
\begin{equation}
S(t,X_i,Z_i) = 1 - \pi(X_i^{\prime}\gamma) + \pi(X_i^{\prime}\gamma)\exp(-Z_i^{\prime}\int_0^t\beta(s)\,\dd s),
\notag
\end{equation}
where
\begin{equation}
\pi(X_i^{\prime}\gamma) = \frac{\exp(X_i^{\prime}\gamma)}{1 + \exp(X_i^{\prime}\gamma)},\quad\text{and}\quad 
Z_i^{\prime}\beta(t) =  \beta_0(t) + \beta_1(t)Z_{i,1} + \cdots + Z_{i,q-1}\beta_{q-1}(t).
\label{eq::linhazard1}
\end{equation} 
We assume that $Z^{\prime}\beta(t) > 0$ for all $Z$ in the support of the covariate distribution; the covariate vectors $(X_{i,1},\ldots,X_{i,p-1})^{\prime}$ and $(Z_{i,1},\ldots,Z_{i,q-1})^{\prime}$ may be completely different, partly overlapping, or the same; the $\gamma_0,\ldots,\gamma_{p-1}$ are unknown parameters, and the $\beta_{0}(t),\ldots,\beta_{q-1}(t)$ are unknown functions. For $l = 0,\ldots,q-1$, write $B_{l}(t) = \int_0^t\beta_{l}(s)\,\dd s$ for the cumulative regression coefficients, and $B(t) = (B_0(t),\ldots,B_{q-1}(t))^{\prime}$, so that $Z_i^{\prime}B(t)$ is the cumulative hazard of the $i${'}th individual. The full parameter vector, denoted $\varphi$, is 
\begin{equation}
\varphi = (B^{\prime},\gamma^{\prime})^{\prime} = (B_0,\ldots,B_{q-1},\gamma_0,\ldots,\gamma_{p-1})^{\prime}.
\notag
\end{equation}
In the standard survival analysis setting, that is, the no cured fraction $\pi\equiv 1$ case, counting process models with hazard rates of the form $Z_i^{\prime}\beta(t) = \beta_0(t) + \sum_{l=1}^{q-1}Z_{i,l}\beta_l(t)$ were first introduced and studied by \citet{aalen1980model,aalen1989linear,aalen1993further}. See the monograph \citet[Ch.~VII.4, p.~562]{andersen1993statistical} for a general discussion of this model, and \citet{huffer1991weighted}, \citet{mckeague1994partly}, \citet{lin1994semiparametric}, and \citet{sinha2009empirical}, for important extensions and variations of the linear hazard regression model in the standard survival setting (that is, $\pi \equiv 1$).

This technical report proceeds as follows. In Section~\ref{sec::detour} we make a brief detour by the estimator introduced by~\citet{aalen1980model,aalen1989linear}. Appendix~\ref{app::aalens_linear_pfs} contains proofs of consistency and limiting normality of this estimator. These results are not new, of course, but the versions of the proof that we provide are instructive, and shed light on some of the challenges encountered, as well as techniques employed, when proving something similar for the cure model. In addition, the form of the Aalen{'}s linear hazard estimator provides the motivation for the estimation strategy that we adopt in the cure model setting. Section~\ref{sec::2.2} introduces this estimation strategy. In Section~\ref{sec::true_model_and_approx} we state our assumptions, and introduce a parametric approximation to the true semiparametric model. In Section~\ref{sec::a_locally_constant_model} we work under the assumption that this parametric approximation is indeed the true model (i.e.~the model generating the data) and derive consistency and limiting normality of our estimators under this assumption. In Section~\ref{sec::shifting_back} we use contiguity techniques to switch back from the parametric models to the semiparametric model, thus obtaining asymptotic results for semiparametric estimators under a semiparametric data generating mechanism. 

The reason for taking the route by a sequence of parametric models models and contiguity, rather than doing the computations directly under the semiparametric model, is that it eases some of the proofs, the consistency proof in particular. A by-product of this approach is that one is spared some approximations in some of the arguments.        

\section{Linear hazard regression models}\label{sec::detour}
In Section~\ref{sec::2.1} we give a brief introduction to Aalen{'}s linear hazard rate regression model and the estimator proposed by~\citet{aalen1980model,aalen1989linear}. In particular, we emphasise that this estimator lends itself naturally to an asymptotic analysis based on martingale theory. The form of Aalen{'}s estimator provides the motivation for the estimator of $\varphi$ that we propose in this technical report, and in Section~\ref{sec::2.2} we explain how. From one section to the other, we reuse many of the symbols, $N$, $Y$, and $M$, because the are so ingrained, the reader should be aware, though, that they are counting processes, at-risk processes, and martingales, associated with different models.     

\subsection{The classical setting}\label{sec::2.1}
Suppose that we are in a setting where all the individuals under study are susceptible to the event of interest. We refer to this as the {`}classical{'} or {`}standard{`} survival analysis setup. The data take the form $(T_1,\delta_1,Z_1),\ldots, (T_n,\delta_n,Z_n)$ in terms of observed lifetimes $T_i = T_i^*\wedge C_i$, being the minimum of a true lifetime $T_i^*$ and a censoring time $C_i$; the censoring indicators $\delta_1,\ldots,\delta_n$ take the value $1$ if $T_i = T_i^*$, and zero otherwise; and $Z_i$ is a $q$-dimensional vector of covariates. Under Aalen{'}s linear hazard rate regression model, the lifetimes $T_i^{*}\mid Z_i$ stem from a distribution with hazard rate given by      
\begin{equation}
Z_i^{\prime}\beta^{\circ}(t)
= \beta_{0}^{\circ}(t) + Z_{i,1}\beta_{1}^{\circ}(t) + \cdots + Z_{i,q-1}\beta_{q-1}^{\circ}(t),
\notag
\end{equation}
where the $\circ$-superscript indicates that these are the true values of the parameters. Based on the vector $(T_i,\delta_i,Z_i)$ we form the counting processes $N_i(t)$ and at-risk processes $Y_i(t)$ defined by 
\begin{equation}
N_i(t) = I\{T_i \leq t ,\delta_i = 1\},\quad \text{and}\quad Y_i(t) = I\{T_i \geq t\}, \quad\text{for $i = 1,\ldots,n$}.
\label{eq::counting_and_atrisk}
\end{equation}  
We also need the martingales 
\begin{equation}
M_i(t) = N_i(t) - \int_0^t Y_i(s)Z_i^{\prime}\beta^{\circ}(s)\,\dd s, \quad\text{for $i = 1,\ldots,n$}, 
\notag
\end{equation}
which are orthogonal and square integrable with predictable quadratic variation, $\langle M_i,M_i\rangle_{t} = \int_0^t Y_i(s) Z_i^{\prime}\beta^{\circ}(s)\,\dd s$. 
To estimate the cumulative $B^{\circ}(t)$, \citet{aalen1980model,aalen1989linear} introduced the estimator whose increments are given by 
\begin{equation}
\dd \widetilde{B}(s) = \widetilde{G}_n(s)^{-1}n^{-1}\sum_{i=1}^n Z_i \,\dd N_i(s), \quad \text{with}\quad
\widetilde{G}_n(s) = n^{-1}\sum_{i=1}^n Y_i(s)Z_i Z_i^{\prime}.
\label{eq::standard.est}
\end{equation}
The relation $\dd \{\widetilde{B}(s) - B^{\circ}(s)\} = n^{-1}\widetilde{G}_n(s)^{-1} \sum_{i=1}^n Z_i \,\dd M_i(s)$ shows that this estimator is amenable to martingale theory. In particular, $n^{-1/2}\sum_{i=1}^n \int_0^t Z_i \,\dd M_i(s)$ is a martingale with variance process $H_n(t) = n^{-1}\sum_{i=1}^n\int_0^tY_i(s) Z_iZ_i^{\prime}Z_i^{\prime}\beta^{\circ}(s) \,\dd s$. Provided $\widetilde{G}_n(s) \to_p G(s)$ and $H_n(s) \to_p H(s)$ as $n\to \infty$, where $G(s)$ and $H(s)$ are full-rank $q\times q$ matrix functions, we have process convergence of $\sqrt{n}(\widetilde{B}- B)$ to an independent increments Gaussian martingale $\Uscr$, whose variance process is 
\begin{equation}
\langle \Uscr,\Uscr\rangle_t = \int_0^t G(s)^{-1}\,\dd H(s) G(s)^{-1}.
\label{eq::aalen_estimator_clt}
\end{equation}
Three proofs of this result can be found in Appendix~\ref{app::aalens_linear_pfs}, and all three are different from what might be called the standard martingale based proof, see for example \citet[Theorem~VII.4.1, p.~575]{andersen1993statistical}. These proofs are of some interest in themselves, and they shed light on some of the challenges with deriving, but also some of the techniques used to derive, a similar result for linear hazard cure model.




\subsection{A cured fraction is present}\label{sec::2.2}
The estimator we introduce below is the natural generalisation of $\widetilde{B}$ in~\eqref{eq::standard.est} to the cure model setting. To understand how, we must delve a little deeper into the cure model. As touched upon in the introduction, when working with the cure model we split the covariates in two, the $p$-dimensional vector $X_i$, and the $q$-dimensional vector $Z_i$. This is done to distinguish those covariates affecting the probability $\pi(X_i^{\prime}\gamma^{\circ})$ of being susceptible, and those working on the hazard rate $Z_i^{\prime}\beta^{\circ}(t) = \beta_0^{\circ}(t)+ \sum_{l=1}^{q-1} Z_{i,l}\beta_l^{\circ}(t)$. 
The counting processes and the at-risk processes are defined as in~\eqref{eq::counting_and_atrisk}. With respect to the history of the observable quantities $(N_1(t),Y_1(t),X_1,Z_1),\ldots,(N_n(t),Y_n(t),X_n,Z_n)$, the compensator of the $i${'}th counting process is (see~\citet{nielsen1992counting} for details)
\begin{equation}
\int_0^tY_i(s)w_i(s,\varphi^{\circ})Z_i^{\prime}\beta^{\circ}(s)\,\dd s,
\notag
\end{equation}
where the functions $w_i(t,\varphi)$ are given by   
\begin{equation}
w_i(t,\varphi) = \frac{\pi(X_i^{\prime}\gamma)\exp(-Z_i^{\prime}\int_0^t\beta(s)\,\dd s)}{1 - \pi(X_i^{\prime}\gamma) + \pi(X_i^{\prime}\gamma)\exp(-Z_i^{\prime}\int_0^t\beta(s)\,\dd s)}.
\notag
\end{equation}
Importantly, note that when $\pi(v) = \exp(v)/\{1 + \exp(v)\}$ is a logistic function, which it will be throughout the paper, the $w_i(t,\varphi)$ functions are also logistic functions, that is
\begin{equation}
w_i(t,\varphi) = \pi(X_i^{\prime}\gamma - Z_i^{\prime}\int_0^t\beta(s)\,\dd s)
= \frac{\exp(X_i^{\prime}\gamma - Z_i^{\prime}\int_0^t\beta(s)\,\dd s)}{1 + \exp(X_i^{\prime}\gamma - Z_i^{\prime}\int_0^t\beta(s)\,\dd s)} .
\label{eq::wi.functions}
\end{equation} 
This means that with respect to the filtration of observables, the 
\begin{equation}
M_i(s) = N_i(s) - \int_0^t Y_i(s) w_i(s,\varphi^{\circ}) Z_i^{\prime}\beta^{\circ}(s)\,\dd s,\quad\text{for $i = 1,\ldots,n$},
\notag
\end{equation} 
are orthogonal, square integrable martingales. Comparing $M_i(s)$ here with the martingale in~\eqref{eq::standard.est}, it appears that a natural estimator, say $\widehat{B}$, of $B^{\circ}$ in the cure model setting is characterised by 
\begin{equation}
\begin{split}
\dd\widehat{B}(t) &= G_{n}(t,\widehat{\varphi})^{-1}n^{-1}\sum_{i=1}^n Z_i\,\dd N_i(t),\\ 
G_{n}(t,\varphi) & = n^{-1}\sum_{i=1}^n Z_iZ_i^{\prime}Y_i(t)\{\delta_i + (1 - \delta_i)w_i(T_i,\varphi)\},
\end{split}
\label{eq::B_hat_nonpara}
\end{equation}
where $\widehat{\varphi}$ is an estimate of the true $\varphi^{\circ}$. The weights $\delta_i + (1 - \delta_i)w_i(T_i,\varphi)$ on the at-risk indicators $Y_i(t)$ are intuitively appealing because $\delta_i = 1$ implies that $U_i = 1$, so if $\delta_i = 1$, the at-risk status of the $i${'}th individual at time $t$ is indeed $Y_i(t)$. If $\delta_i = 0$, on the other hand, the value of $U_i$ is unknown, $Y_i(t)$ is equal to $1$ for all $t$, and $w_i(T_i,\varphi)$ is the probability we subscribe to the $i${'}th individual being at risk. 

Contrary to what is the case for the Aalen estimator in~\eqref{eq::standard.est}, however, the difference 
\begin{equation}
\widehat{B}(t) - B^{\circ}(t), 
\notag
\end{equation} 
is not a martingale, hence the analytical tractability of the Aalen estimator provided by the availability of martingale theory does not immediately carry over to the estimator in~\eqref{eq::B_hat_nonpara}. There are two reasons for still studying the estimator in~\eqref{eq::B_hat_nonpara}: First, it is computationally extremely easy to compute, and, second, it is {\it almost} a martingale.

\section{The true model and an approximation}\label{sec::true_model_and_approx}
We have independent and identically distributed (i.i.d.)~replicates $(T_i,\delta_i,X_i,Z_i)$ for $i = 1,\ldots,n$ of $(T,\delta,X,Z)$, observed over a finite interval of time $[0,\tau]$, where $X$ and $Z$ are $p$- and $q$-dimensional column vectors of covariates, respectively, with the first elements being a $1$ corresponding to an intercept. The observed time $T$ is the minimum of a true survival time $T^{*}$ and a censoring time $C$, that is $T = T^*\wedge C$, and $\delta$ is an indicator taking the value $1$ if an event is observed, zero otherwise. The censoring times are assumed to be independent draws from an absolutely continuous distribution $H_c$ on $(0,\tau]$, independent of both the $X$- and $Z$-covariates, and of the survival times. The survival times $T_i^{*}\mid Z_i$ stem from a mixture distribution with hazard rates 
\begin{equation}
U_iZ_i^{\prime}\beta^{\circ}(t)
= U_i\{\beta_{0}^{\circ}(t) + Z_{i,1}\beta_{1}^{\circ}(t) + \cdots + Z_{i,q-1}\beta_{q-1}^{\circ}(t)\},\quad\text{for $i = 1,\ldots,n$},
\notag
\end{equation}
where $U_1,\ldots,U_n$ are independent Bernoulli random variables whose means conditional on $X$ are 
\begin{equation}
\pi_i = \pi(X_i^{\prime}\gamma^{\circ}) = \frac{\exp(-X_i^{\prime}\gamma^{\circ})}{1 + \exp(-X_i^{\prime}\gamma^{\circ})},\quad \text{for $i=1,\ldots,n$}. 
\notag
\end{equation}
As above, the $\circ$-superscript denotes the true values of the parameters. The function denoted $\pi(v) = \exp(v)/\{1 + \exp(v)\}$ will always be the logistic function, and we use the shorthand $\pi_i = \pi(X_i^{\prime}\gamma)$ for the Bernoulli mean of the $i${'}th individual. The true survival function of the $i${'}th individual is then
\begin{equation}
S(t ; X_i,Z_i,\varphi^{\circ}) = 1 - \pi(X_i^{\prime}\gamma^{\circ}) + \pi(X_i^{\prime}\gamma^{\circ})\exp\{-Z_i^{\prime}B^{\circ}(t)\}.
\label{eq::popsurv1}
\end{equation} 
In addition to the distributional assumptions already mentioned, we impose the following conditions: 
\begin{assumption}\label{assumption1} The functions $\beta_0^{\circ}(s),\ldots,\beta_{q-1}^{\circ}(s)$ are two times continuously differentiable on $[0,\tau]$.
\end{assumption}
\begin{assumption}\label{assumption2} The parameter $\gamma^{\circ} = (\gamma_0^{\circ},\ldots,\gamma_{p-1}^{\circ})^{\prime}$ lies in the interior of a set $\Theta_{\gamma}\subset \real^p$, and $\Theta_{\gamma}$ is closed and bounded.  
\end{assumption}
\begin{assumption}\label{assumption3} The covariates $X$ and $Z$ are linearly independent; bounded with probability one; and the matrices $\E\, ZZ^{\prime}$ and $\E\, XX^{\prime}$ are both positive definite. 
\end{assumption}
\begin{assumption}\label{adhoc_assumption} Apart from the intercept terms, all the components of both covariate vectors $X$ and $Z$ stem from the same distribution.   
\end{assumption} 
  
The last assumption is only used in the proof of Lemma~\ref{lemma::uniqueness} where uniqueness of the solution to the expectation of a set of estimating equations is proved. It is likely not a necessary condition for this lemma to be true. The first derivative of $\beta_{l}^{\circ}(s)$ with respect to time is $\dot{\beta}_l^{\circ}(s)$, and $\dot{\beta}^{\circ}(s)$ is the column vector $(\dot{\beta}_0^{\circ}(s),  \ldots,\dot{\beta}_{q-1}^{\circ}(s))^{\prime}$, and similarly for the second derivative $\ddot{\beta}_l^{\circ}(s)$, and the column vector $\ddot{\beta}^{\circ}(s)$ of second derivatives. 

From now on, when expectations are taken with respect to a distribution of $T_1,\ldots,T_n$, both conditionally on the covariates and unconditionally, this will be denoted by subscripts, for example $\E_{\varphi}\,(\cdot)$; while an expectation without a subscript $\E\,(\cdot)$, means that the expectation is taken solely with respect to the distribution of the covariates. In the following we use the notation $Y(s)$ and $w(s,\theta)$ for the random variables $I\{T \geq s\}$ and $\pi\{X^{\prime}\gamma - Z^{\prime}B(s)\}$, respectively; and write 
\begin{equation}
y(s;X,Z) = \E_{\varphi}\,\{Y(s)\mid X,Z\} = \pi(X^{\prime}\gamma)\{1-H_c(s)\}\exp\{-Z^{\prime}B(s)\} + 1 - \pi(X^{\prime}\gamma),
\notag
\end{equation}
for the conditional expectation of $Y(s)$, and $y(s)$ for the marginal $y(s) = \E\, y(s;X,Z)$. It will be clear from the context what distribution the expectations in $y(s)$ and $y(s;X,Z)$ are with respect to. Notice also that $y(s)$ and $y(s;X,Z)$ are continuous in $s$, with continuous derivatives on $[0,\tau]$. Moreover, from the i.i.d.~assumption, it follows from the Glivenko--Cantelli theorem (see e.g.~\citet[Theorem~19.1, p.~266]{vanderVaart1998}), that $n^{-1}\sum_{i=1}^n Y_i(t) \to_p y(t)$, and also 
\begin{equation}
n^{-1}\sum_{i=1}^n \E_{\varphi}\,\{Y_i(t)\mid X,Z\} \overset{p}\to y(t), 
\notag
\end{equation}
both uniformly on $[0,\tau]$. The distribution of the data specified by Assumptions~\ref{assumption1}--\ref{assumption2} is denoted $P_{\circ}$. This is the distribution associated with the parameter value $\varphi^{\circ} = \{(B^{\circ})^{\prime},(\gamma^{\circ})^{\prime}\}^{\prime}$, where $B_{l}^{\circ}(t) = \int_0^{t}\beta_l^{\circ}(s)\,\dd s$ for $l = 0,\ldots,{q-1}$, and the $\beta_0^{\circ}(s),\ldots,\beta_{q-1}^{\circ}(s)$ are at least two times continuously differentiable on $[0,\tau]$. 

The approximation alluded to in the title of this section are the parametric distributions $P_{\star}^K$ constructed as follows: For some integer $K \geq 2$, let 
\begin{equation}
0 = v_0 < v_{1} < \cdots < v_{K-1} < v_K = \tau,
\notag
\end{equation} 
be a partition of $[0,\tau]$. Set $W_j = [v_{j-1},v_j)$ and assume that $v_{j} = j\tau/K$ for $j=1,\ldots,K$, and let $I_{W_j}(t) = 1$ if $t \in W_j$, and zero otherwise. For some $K$, the parameters of $P_{\star}^K$ are 
\begin{equation}
\gamma^{\star} = \gamma^{\circ},\quad \text{and}\quad B^{\star}(t) = \int_0^t \beta^{\star}(s)\,\dd s,
\notag
\end{equation}
where 
\begin{equation}
\beta^{\star}(t) = \sum_{j=1}^K \beta_j^{\star} I_{W_j}(t),\quad\text{and}\quad\beta_j^{\star} = \beta^{\circ}(v_{j-1}),\quad\text{for $j =1,\ldots,K$},
\notag
\end{equation}
are column vectors, e.g.~$\beta_j^{\star} = (\beta_{0,j}^{\star},\ldots,\beta_{q-1,j}^{\star})^{\prime}$. To not overburden an already heavy notation, we avoid using $K$ super/sub-script for the parameters of the distributions $P_{\star}^K$. The distributions $P_{\star}^K$ are fully specified by the parameter vector $\theta^{\star}$ that lives in $\Theta_K \subset \real^{qK}\times \Theta_{\gamma} \subset \real^{qK}\times \real^p$, where an arbitrary element of $\Theta_K$ is of the form
\begin{equation}
\theta = (\beta_0^{\prime},\ldots,\beta_{q-1}^{\prime},\gamma^{\prime})^{\prime}
= (\beta_{0,1},\ldots,\beta_{q-1,1},\ldots,\beta_{0,K},\ldots,\beta_{q-1,K},\gamma_0,\ldots,\gamma_{p-1})^{\prime}.
\notag
\end{equation}
Notice that by Assumptions~\ref{assumption1}--\ref{assumption2} we can, and will, assume that the parameter spaces $\Theta_K$ are all closed and bounded. With a slight abuse of notation, we index functions both by $\theta$ and by $\varphi$, even though these are parameters living in different spaces. Throughout, $\norm{\alpha} = (\sum_{\ell=1}^q \alpha_{\ell}^2)^{1/2}$ is the Euclidian norm, and $\Delta \xi(t) = \xi(t) - \xi(t-)$ is the jump of a process $\xi$ at time $t$. Here $\xi(t-) = \lim_{s \uparrow t}\xi(s-)$, with the limit taken from the left, as usual.

The strategy is now as follows: In Section~\ref{sec::a_locally_constant_model} we work under the assumption that the data stem from the distribution $P_{\star}^K$ for some fixed $K$, and study the large-sample properties of our estimators as $n\to\infty$. This section is thus purely parametric. In Section~\ref{sec::cov_shrink} we switch to a triangular array setup, where we assume that the data are generated by $P_{\star}^{K}$ for increasing values of $K$, eventually tending to infinity, and derive large-sample results when both $n\to \infty$ and $K \to \infty$. Finite dimensional convergence in distribution is denoted by ${``}\to_d{"}$, while ${``}\Rightarrow{"}$ indicates full process convergence. In Section~\ref{sec::shifting_back} we use measure change techniques, inspired by those developed in~\citet{mykland2009inference} in a high-frequency setup, to switch back to having the data generated by the distribution $P_{\circ}$ associated with the continuous regression functions $\beta_0^{\circ}(s),\ldots,\beta_{q-1}^{\circ}(s)$.          


\subsection{The locally constant model}\label{sec::a_locally_constant_model} 
In this section we assume that for a fixed partition $0 = v_0 < v_1 < \cdots v_{K-1} < v_K = \tau$, the data are generated by a $P_{\star}^K$ distribution, that is, $K$ is held fixed. The expectation taken with respect to this distribution is denoted $\E_{\theta^{\star}}(\cdot)$. This means that our model for the data is given by the survival functions   
\begin{equation}
S(t ;X,Z,\theta) = 1 - \pi(X^{\prime}\gamma) + \pi(X^{\prime}\gamma)\exp\{-Z^{\prime}\int_0^{t}\sum_{j=1}^K \beta_jI_{W_j}(s)\,\dd s\}, 
\label{eq::surv1_model}
\end{equation} 
with the true model being $S(t ;X,Z,\theta^{\star})$. The distributional assumptions on the covariates and censoring times are as stated above. In particular, for $i = 1,\ldots,n$,
\begin{equation} 
M_i(t,\theta^{\star}) = N_i(t) - \int_0^t Y_i(s) w_i(s,\theta^{\star}) Z_i^{\prime}\beta^{\star}(s)\,\dd s, 
\notag
\end{equation}
are martingales under $P_{\star}^K$. 

The natural counterpart of~\eqref{eq::B_hat_nonpara} under the model given here are the estimators $\widehat{\beta}_1,\ldots,\widehat{\beta}_K$ characterised by 
\begin{equation}
\widehat{\beta}_j = G_{n,j}(\widehat{\theta})^{-1} n^{-1}\sum_{i=1}^nZ_i\int_{W_j} \,\dd N_i(s),\quad\text{for $j=1,\ldots,K$}, 
\label{eq::betahat1}
\end{equation} 
with
\begin{equation}
G_{n,j}(\theta) = n^{-1}\sum_{i=1}^n Z_iZ_i^{\prime}r_j(T_i)\{\delta_i + (1-\delta_i)w_i(T_i,\theta)\}, \quad\text{for $j=1,\ldots,K$},
\label{eq::betahat2}
\end{equation} 
for some estimator $\widehat{\gamma}$. The functions $r_{j}(t)$ are given by
\begin{equation}
r_{j}(t) = \int_{W_j} I\{t \geq s\} \,\dd s = \int_0^{t}I_{W_j}(s)\,\dd s,\quad\text{for $j = 1,\ldots,K$}.
\label{eq::rj_funcs}
\end{equation} 
Thus, $r_{j}(T_i) = \int_{W_j} I\{T_i \geq s\} \,\dd s = \int_{W_j} Y_i(s) \,\dd s$ is the amount of time the $i${'}th individual spends in the $j${'}th time interval. Apart from the intuitive appeal of the $\widehat{\beta}_j$ in~\eqref{eq::betahat1}, these estimators can be motivated, and are in fact defined, by certain estimating equations. To see how, we start with the log-likelihood function of the model in~\eqref{eq::surv1_model}. It is
\begin{equation}
\ell_n^K(\theta)  = \sum_{i=1}^n [ \delta_i \{\log \pi_i + \log Z_i^{\prime} \beta(T_i)  - Z_i^{\prime}B(T_i) \} + (1 - \delta_i) \log \{1 - \pi_i + \pi_i \exp(- Z_i^{\prime}B(T_i)) \} ],
\notag
\end{equation} 
where the superscript $K$ indicates that this likelihood function is defined relative to a given partition $W_{1},\ldots,W_{K}$ of $[0,\tau]$. After some algebra and using the expression for the $w_i(T_i,\theta)$ given in~\eqref{eq::wi.functions}, one finds that the score functions are\footnote{For $\partial\ell_n^K(\theta) /\partial\gamma$, use that $1 - w_i(T_i,\theta) = (1 - \pi_i)/\{1 - \pi_i + \pi_ie^{-Z_iB(T_i)}\}$, then $\partial\ell^K(\theta,T_i)/\partial \gamma = \delta_i (1 - \pi_i) - (1-\delta_i)\{\pi_i(1 - \pi_i) - \pi_i(1 - \pi_i)e^{-Z_iB(T_i)}\}/\{1 - \pi_i + \pi_ie^{-Z_iB(T_i)}\}
 = \delta_i (1 - \pi_i) - (1-\delta_i)[\pi_i \{1 - w_i(T_i,\theta)\}
- (1 - \pi_i)w_i(T_i,\theta)]= \delta_i (1 - \pi_i) + (1-\delta_i)(w_i(T_i,\theta) - \pi_i)
= \delta_i + (1 - \delta_i)w_i(T_i,\theta) - \pi_i$.}
\begin{equation}
\begin{split}
\frac{\partial}{\partial\beta_j}\ell_n^K(\theta) 
& = \sum_{i=1}^n\big[ \frac{Z_i\delta_iI_{W_j}(T_i)}{Z_i^{\prime}\beta_j} 
- r_j(T_i) \{\delta_i + (1-\delta_i)w_i(T_i,\theta)\} Z_i \big],\quad\text{for $j=1,\ldots,K$},\\
\frac{\partial}{\partial\gamma}\ell_n^K(\theta) & = \sum_{i=1}^n X_i\{\delta_i + (1-\delta_i)w_i(T_i,\theta) - \pi_i\}.
\end{split}
\notag
\end{equation}
The estimators characterised by~\eqref{eq::betahat1}--\eqref{eq::betahat2} are not necessarily the zeros of these equations, and it turns out that for moderate $K$ the likelihood $\ell_{n}^K(\theta)$ is computationally very hard to maximise (it is, after all, a model with $qK + p$ parameters). Therefore, we multiply each of the elements of the sums $\partial\ell_n^K(\theta) /\partial\beta_j$ by $Z_i^{\prime}\beta_j$ for $j = 1,\ldots,K$, to obtain the vector valued function 
\begin{equation}
\Psi_{n}^K\colon \mathbb{R}^{qK + p}\to \mathbb{R}^{qK + p}, 
\notag
\end{equation}
whose vector valued elements are 
\begin{equation}
\begin{split}
&\Psi_{n,j}^K(\theta)  = \frac{1}{n}\sum_{i=1}^n Z_i[\delta_iI_{W_j}(T_i) - r_j(T_i)\{\delta_i + (1-\delta_i)w_i(T_i,\theta)\}Z_i^{\prime}\beta_j ],\quad\text{for $j=1,\ldots,K$},\\
& \Psi_{n,K+1}^K(\theta)  = \frac{1}{n}\sum_{i=1}^n X_i\{\delta_i + (1-\delta_i)w_i(T_i,\theta) - \pi_i\}. 
\end{split}
\label{eq::system1}
\end{equation}
Thus, $\Psi_n^K(\theta)$ is the $qK + p$ dimensional column vector where the $\Psi_{n,1}^K(\theta),\ldots,\Psi_{n,K+1}^K(\theta)$ are stacked on top of each other. The estimator $\widehat{\theta} = (\widehat{\beta}^{\prime},\widehat{\gamma}^{\prime})^{\prime}\in \Theta_K$ is defined as the solution to 
\begin{equation}
\Psi_n^K(\theta) = 0.
\label{eq::the_est_eq}
\end{equation}  
The estimator $\widehat{\theta}$ depends, of course, on both $n$ and $K$, but we stick to writing $\widehat{\theta}$. Given an estimator $\widehat{\gamma}$, we see that the solution $\widehat{\theta}$ to $\Psi_{n,j}^K(\theta) = 0$ for $j=1,\ldots,K$ must have $\widehat{\beta}_j$ as defined in~\eqref{eq::betahat1}--\eqref{eq::betahat2}. It should be noted that the estimating equations leading to the Aalen estimator $\widetilde{B}$ of~\eqref{eq::standard.est} can be derived in the same manner, that is, by way of the score functions. This is done explicitly in Appendix~\ref{app::aalens_linear_pfs}, and also in~\citet{mckeague1994partly}, in both cases leading to estimating equations amenable to martingale theory. Note also that we can rewrite the equations in~\eqref{eq::system1} as (see Appendix~\ref{app::some_results}),
\begin{equation}
\begin{split}
& \Psi_{n,j}^K(\theta)  = \frac{1}{n}\sum_{i=1}^n Z_i\int_0^{\tau} 
\big\{I_{W_j}(s) - (1-w_i(s,\theta))r_j(s) \, Z_i^{\prime}\beta_j \big\} \,\dd M_{i}(s,\theta),\quad j = 1,\ldots,K,\\
& \Psi_{n,K+1}^K(\theta)  =\frac{1}{n}\sum_{i=1}^n X_i\int_0^{\tau}\{1-w_i(s,\theta)\}\,\dd M_{i}(s,\theta),
\end{split}
\label{eq::system2}
\end{equation}  
where $\dd M_i(s,\theta)$ is a shorthand for $\dd N_i(s) - Y_i(s)w_i(\theta,s)Z_i^{\prime}\beta(s)\,\dd s$, that is, when evaluated in $\theta^{\star}$ the $\dd M_i(s,\theta^{\star})$ are martingale increments under $P_{\star}^K$. Consequently, $\Psi_n^K(\theta^{\star})$ is a martingale under $P_{\star}^K$, and $\Psi_n^K(\theta) = 0$ is an unbiased estimating equation. 

\subsection{Parametric large-sample results}
We now proceed to the asymptotics of $\widehat{\theta}$ as $n$ tends to infinity (and $K$ is held constant), working under a fixed distribution $P_{\star}^K$ for the data. Let $h(t,x,z,\theta) \in \real^{qK+p}$ be the function
\begin{equation}
h(t,x,z,\theta) = 
\begin{pmatrix}
h_1(t,x,z,\theta)\\
\vdots \\
h_K(t,x,z,\theta)\\
h_{K+1}(t,x,z,\theta)
\end{pmatrix}
= \begin{pmatrix}
zg_1(s,x,z,\theta)\\ 
\vdots \\
zg_K(s,x,z,\theta)\\ 
x\{1 - w(s,\theta)\}
\end{pmatrix},
\label{eq::h_func}
\end{equation}
where
\begin{equation}
\begin{split}
g_j(s,x,z,\theta) & = I_{W_j}(s) - \{1 - w(s,\theta)\}r_j(s)z^{\prime}\beta_j,\quad\text{for $j = 1,\ldots,K$}.
\end{split}
\label{eq::small_g_func}
\end{equation}
Denote by $\Psi_{\star}^K$ the probability limit of $\Psi_n^K$ under the $P_{\star}^K$-distribution when $n \to \infty$, that is $\Psi_{\star}^K(\theta) = \E_{\theta^{\star}} \,\Psi_{n}^K(\theta)$, where
\begin{equation}
\Psi_{\star}^K(\theta) = \E\,\int_0^{\tau} h(s,X,Z,\theta) y(s;X,Z)\{w(s,\theta^{\star})Z^{\prime}\beta^{\star}(s) - w(s,\theta)Z^{\prime}\beta(s)\}\,\dd s.
\label{eq::Psi_avg1}
\end{equation}
Clearly, $\theta^{\star}$ is a solution to the equation $\Psi_{\star}^K(\theta) = 0$. The next lemma shows that, in $\Theta_K$, it is the only one. 

\begin{lemma}\label{lemma::uniqueness} The parameter value $\theta^{\star}$ is the unique solution to $\Psi_{\star}^K(\theta) = 0$.   
\end{lemma}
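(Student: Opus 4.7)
Write $\lambda(s, X, Z;\theta) := w(s, \theta)\, Z'\beta(s)$ for the observable conditional hazard and $F(s, X, Z;\theta) := \lambda(s, X, Z;\theta^\star) - \lambda(s, X, Z;\theta)$ for its deviation from truth; then \eqref{eq::Psi_avg1} says $\Psi_\star^K(\theta) = 0$ is
\[
\E\int_0^\tau h(s, X, Z, \theta)\, y(s; X, Z)\, F(s, X, Z;\theta)\,\dd s = 0,
\]
with $y(s; X, Z) > 0$ on $[0, \tau)$. The aim is to show that any $\theta \in \Theta_K$ satisfying this equation must have $F \equiv 0$ on $[0, \tau]$ $(X, Z)$-a.s., and then to identify $\theta = \theta^\star$ from $F \equiv 0$.

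To produce $F \equiv 0$, I would contract the vector equation with the direction $\theta - \theta^\star$ to obtain a scalar equation. Using the partition identities $\sum_{j=1}^K (Z'\beta_j) I_{W_j}(s) = Z'\beta(s)$ and
\[
\sum_{j=1}^K (Z'\alpha_j)(Z'\beta_j)\, r_j(s) = \int_0^s Z'\alpha(u)\, Z'\beta(u)\,\dd u,
\]
valid for any piecewise-constant $\alpha, \beta$ adapted to $\{W_j\}$, the contracted test function $(\theta - \theta^\star)^\top h(s, X, Z, \theta)$ can be expressed in terms of the logit difference $\eta^\star(s) - \eta(s) = X'(\gamma^\star - \gamma) + Z'(B(s) - B^\star(s))$, the instantaneous hazard difference $Z'[\beta^\star(s) - \beta(s)]$, and the mixed integral $\int_0^s Z'\beta(u)\, Z'[\beta^\star(u) - \beta(u)]\,\dd u$. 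Writing $w(s, \theta^\star) - w(s, \theta) = \bar{\pi}'(s, X, Z)\,(\eta^\star(s) - \eta(s))$ with $\bar{\pi}' = \int_0^1 \pi'(\eta + t(\eta^\star - \eta))\,\dd t > 0$, and decomposing $F = -w(s, \theta)\, Z'[\beta(s) - \beta^\star(s)] + [w(s, \theta^\star) - w(s, \theta)]\, Z'\beta^\star(s)$, the scalar equation should rearrange into $\E\int_0^\tau y(s; X, Z)\, Q(s, X, Z)\,\dd s = 0$ with $Q$ a non-negative quadratic form in the two differences $(\eta^\star - \eta)(s)$ and $Z'[\beta^\star(s) - \beta(s)]$. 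Non-negativity of $y$ and $Q$ then forces $Q \equiv 0$ a.e., giving $\eta = \eta^\star$ and $Z'\beta = Z'\beta^\star$ on the support, hence $F \equiv 0$.

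From $X'\gamma - Z'B(s) = X'\gamma^\star - Z'B^\star(s)$ for $(X, Z)$-a.s.~and all $s \in [0, \tau]$, letting $s \to 0^+$ yields $X'\gamma = X'\gamma^\star$ a.s., and the positive definiteness of $\E XX'$ (Assumption~\ref{assumption3}) gives $\gamma = \gamma^\star$. Then $Z'B(s) = Z'B^\star(s)$ for all $s$ a.s., and positive definiteness of $\E ZZ'$ gives $B(s) = B^\star(s)$ on $[0, \tau]$; since both are piecewise linear on the $W_j$'s, differentiating on each interval gives $\beta_j = \beta_j^\star$ for $j = 1, \ldots, K$, and hence $\theta = \theta^\star$.

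The main obstacle will be precisely the rearrangement of the preceding paragraph into a manifestly non-negative quadratic, despite the presence of the cross integral $\int_0^s Z'\beta(u)\, Z'[\beta^\star(u) - \beta(u)]\,\dd u$, which is not directly a square. Because $\Psi_n^K$ is not the observed score but its rescaling by $Z'\beta_j$ inside each summand of the $\beta_j$-block, the standard information-inequality shortcut is not available, and the non-negativity must be extracted instead from the logistic identity $w(1-w) = \pi'(\eta)$ together with the piecewise-constant structure through the identity displayed above. Assumption~\ref{adhoc_assumption} is signalled by the author to enter here, most plausibly to prevent degenerate trade-offs between $\gamma$-variation and $\beta$-variation that could leave the quadratic vanishing on the support without forcing both differences to vanish separately; the common distribution of the non-intercept components of $X$ and $Z$ supplies the joint variability in the support needed to invert the identity $X'(\gamma - \gamma^\star) = Z'(B(s) - B^\star(s))$ into $\gamma = \gamma^\star$ and $B = B^\star$ individually.
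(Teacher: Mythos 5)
There is a genuine gap at the heart of your proposal: the step that actually proves the lemma --- deducing from $\Psi_{\star}^K(\theta)=0$ that the deviation $F(s,X,Z;\theta)$ vanishes --- is only conjectured, not carried out, and your own closing paragraph concedes that the required rearrangement into a non-negative quadratic is ``the main obstacle'' without resolving it. Concretely, contracting with $\theta-\theta^{\star}$ and using your partition identities gives the integrand as the product of
$Z^{\prime}\{\beta(s)-\beta^{\star}(s)\} + \{1-w(s,\theta)\}\big[X^{\prime}(\gamma-\gamma^{\star}) - \int_0^s Z^{\prime}\{\beta(u)-\beta^{\star}(u)\}Z^{\prime}\beta(u)\,\dd u\big]$
with
$-w(s,\theta)Z^{\prime}\{\beta(s)-\beta^{\star}(s)\} - \bar{\pi}^{\prime}\,Z^{\prime}\beta^{\star}(s)\big[X^{\prime}(\gamma-\gamma^{\star}) - Z^{\prime}\{B(s)-B^{\star}(s)\}\big]$,
and the two bracketed quantities are genuinely different objects (the cross integral $\int_0^s Z^{\prime}(\beta-\beta^{\star})Z^{\prime}\beta\,\dd u$ is not $Z^{\prime}(B-B^{\star})(s)$), so the product has no evident sign. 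Since $\Psi_n^K$ is a rescaling of the score rather than the score itself, there is no concavity or information-inequality reason for the sign to come out right, and I see no way to force it. Your final identification step (from $\eta\equiv\eta^{\star}$ to $\gamma=\gamma^{\star}$ and $B=B^{\star}$ via linear independence) is fine, but it is downstream of the missing argument.

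The paper's proof avoids this entirely by exploiting the triangular structure of the equations in the time intervals: $r_j(s)=0$ for $s< v_{j-1}$, so the $j$th block equation only involves the intervals $W_{\ell}$ with $\ell\geq j$. Taking $j=1$ and restricting to the intercept components, the tail sum over $\ell\geq 2$ is rewritten --- using the $(K{+}1)$st (that is, $\gamma$-) block equation together with Assumption~\ref{adhoc_assumption} to match the relevant $X$- and $Z$-moments --- as minus the corresponding $W_1$ term. One is left with two integrals of strictly positive weights against the single function $w(s,\theta^{\star})Z^{\prime}\beta_1^{\star}-w(s,\theta)Z^{\prime}\beta_1$, which cannot change sign on $W_1$ because both hazards are constant there; hence it must vanish on $W_1$, and non-linearity in $s$ of the logistic expressions then forces $X^{\prime}(\gamma^{\star}-\gamma)-Z^{\prime}(\beta_1^{\star}-\beta_1)s\equiv 0$, whence $\gamma=\gamma^{\star}$ and $\beta_1=\beta_1^{\star}$ by Assumption~\ref{assumption3}; the remaining $\beta_j$ follow by proceeding forward through the intervals. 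Note also that your reading of Assumption~\ref{adhoc_assumption} is off: it is not there to rule out trade-offs between $\gamma$- and $\beta$-variation in the support, but to allow the expectation appearing in the $j$th block equation (which carries a factor $Z^{\prime}\beta_j$) to be cancelled against the $\gamma$-equation (which carries an $X$ instead of a $Z$).
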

\begin{proof} See Appendix~\ref{app::lemma::uniqueness}. 
\end{proof}

\begin{lemma}\label{lemma::consistency1} The sequence of solutions $\widehat{\theta}$ to $\Psi_n^K(\theta) = 0$ is consistent for $\theta^{\star}$.  
\end{lemma}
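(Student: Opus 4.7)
The plan is to apply a standard Z-estimator consistency argument, following, e.g., Theorem~5.9 in~\citet{vanderVaart1998}. This requires three ingredients: (a) the estimating function $\Psi_n^K$ converges in probability to $\Psi_{\star}^K$ uniformly on the compact parameter space $\Theta_K$; (b) $\theta^{\star}$ is a well-separated zero of $\Psi_{\star}^K$; and (c) the estimator $\widehat\theta$ satisfies $\Psi_n^K(\widehat\theta) = 0$, which holds by construction via~\eqref{eq::the_est_eq}.

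For (a), I would first verify pointwise convergence $\Psi_n^K(\theta) \to_p \Psi_{\star}^K(\theta)$, which follows by a componentwise weak law of large numbers: each summand in~\eqref{eq::system1} is a bounded function of the i.i.d.~observation $(T_i,\delta_i,X_i,Z_i)$, because the covariates $X_i,Z_i$ are bounded by Assumption~\ref{assumption3}, the $r_j(T_i)$ are bounded by $\tau/K$, the logistic weights $w_i(T_i,\theta)$ lie in $[0,1]$, and $\beta_j$ sits in the compact set $\Theta_K$. To upgrade to uniform convergence, I would show that $\theta \mapsto \Psi_n^K(\theta)$ is Lipschitz on $\Theta_K$ with a deterministic constant independent of $n$ and of the sample. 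This is straightforward: the only dependence on $\theta$ occurs through $w_i(T_i,\theta)$, which is the logistic transform of a linear combination of bounded quantities and is therefore Lipschitz in $\theta$ with a constant depending only on the sup-norm bounds of $X,Z$ and of $\Theta_K$; and through the linear factor $Z_i^{\prime}\beta_j$. Deterministic Lipschitz continuity on the compact $\Theta_K$ combined with pointwise convergence at a countable dense subset then yields $\sup_{\theta \in \Theta_K}\norm{\Psi_n^K(\theta) - \Psi_{\star}^K(\theta)} \to_p 0$.

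For (b), the map $\theta \mapsto \Psi_{\star}^K(\theta)$ is continuous on $\Theta_K$ by dominated convergence, using that the integrand in~\eqref{eq::Psi_avg1} is bounded and continuous in $\theta$. By Lemma~\ref{lemma::uniqueness}, $\theta^{\star}$ is the unique zero of $\Psi_{\star}^K$ in $\Theta_K$. Continuity and compactness then force the well-separation condition $\inf_{\theta \in \Theta_K,\,\norm{\theta - \theta^{\star}} \geq \varepsilon}\norm{\Psi_{\star}^K(\theta)} > 0$ for every $\varepsilon > 0$. Combining everything, on the event where the uniform bound in (a) is small we get $\norm{\Psi_{\star}^K(\widehat\theta)} = \norm{\Psi_{\star}^K(\widehat\theta) - \Psi_n^K(\widehat\theta)} \leq \sup_{\theta \in \Theta_K}\norm{\Psi_n^K(\theta) - \Psi_{\star}^K(\theta)} = o_p(1)$, which by well-separation forces $\widehat\theta \to_p \theta^{\star}$.

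The main obstacle I expect is step~(a): establishing uniform convergence, and in particular checking that the Lipschitz constants can be chosen deterministically and uniformly in $n$. This is the only place that requires some (routine) computation, namely differentiating $w_i(T_i,\theta)$ in $\theta$ and bounding the derivative using Assumption~\ref{assumption3} together with the compactness of $\Theta_K$. An alternative route is to verify directly that the class of functions $\{h(s,x,z,\theta)y(s;x,z) : \theta \in \Theta_K\}$ is Glivenko--Cantelli, but the equicontinuity argument seems cleaner and more self-contained.
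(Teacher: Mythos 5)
Your proposal is correct and follows essentially the same route as the paper: consistency via van der Vaart's Theorem~5.9, with identification supplied by Lemma~\ref{lemma::uniqueness} and uniform convergence of $\Psi_n^K$ to $\Psi_{\star}^K$ over the compact $\Theta_K$. The only (immaterial) difference is how the uniform law of large numbers is justified --- the paper first splits off the $P_{\star}^K$-martingale part and invokes the compactness--continuity--domination ULLN of Ferguson, whereas you argue equi-Lipschitz continuity in $\theta$ directly; both are valid.
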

\begin{proof} We have that 
\begin{equation}
\begin{split}
\Psi_n^K(\theta) & = \frac{1}{n}\sum_{i=1}^n\int_0^{\tau}h(s,X_i,Z_i,\theta) \,\dd M_i(s,\theta)\\
& = \frac{1}{n}\sum_{i=1}^n\int_0^{\tau}h(s,X_i,Z_i,\theta) \big[\dd M_i(s,\theta^{\star}) 
+ Y_i(s)Z_i^{\prime}\{w_i(s,\theta^{\star})\beta^{\star}(s) - w_i(s,\theta) \beta(s)\}\,\dd s \big] \\
& = \frac{1}{n}\sum_{i=1}^n\int_0^{\tau}h(s,X_i,Z_i,\theta) \,\dd M_i(s,\theta^{\star}) + \Psi_{\star}^K(\theta) + o_p(1),
\end{split} 
\notag
\end{equation} 
uniformly in $\theta$ by Assumptions~\ref{assumption1}--\ref{assumption2}, that is, using that $\Theta_K$ is compact, that the summands are continuous in $\theta$ for every data point, and that they are dominated by an integrable function \citep[Theorem~16(a), p.~108]{ferguson1996course}. By Assumptions~\ref{assumption1}--\ref{assumption2}, the same applies to the function $\theta \mapsto \int_0^{\tau} h(s,x,z,\theta)\,\dd M(s,\theta^{\star})$, therefore,
\begin{equation}
\sup_{\theta \in \Theta_K}\norm{\Psi_n^K(\theta)  - \Psi_{\star}^K(\theta) } = 
\sup_{\theta \in \Theta_K}\norm{ \frac{1}{n}\sum_{i=1}^n\int_0^{\tau}h(s,X_i,Z_i,\theta) \,\dd M_i(s,\theta^{\star})} + o_p(1) \overset{p}\to 0,
\notag
\end{equation}
as $n \to \infty$. Coupled with the uniqueness of $\theta^{\star}$ from Lemma~\ref{lemma::uniqueness}, Theorem~5.9 in~\citet[p.~46]{vanderVaart1998} gives the result.   
\end{proof}
From the expression for $\Psi^K_n(\theta)$ in the proof of Lemma~\ref{lemma::consistency1}, we see that, when it is evaluated in $\theta^{\star}$,  
\begin{equation}
\Psi_n^K(\theta^{\star}) = \frac{1}{n}\sum_{i=1}^n \int_0^{\tau}h(s,X_i,Z_i,\theta^{\star}) \,\dd M_i(s,\theta^{\star}) + o_p(1), 
\notag
\end{equation}
which gives the following expression for the approximate variance of $\Psi_n(\theta^{\star})$, namely,
\begin{equation}
\Gamma^K_{\theta^{\star}} = \E_{\theta^{\star}}\,\int_0^{\tau}h(s,x,z,\theta^{\star})h(s,x,z,\theta^{\star})^{\prime} y(s;x,z) w(s,\theta^{\star})z^{\prime}\beta^{\star}(s)\,\dd s.
\notag
\end{equation}

Let $\dot{\Psi}_{\theta}^K$ be the derivative of $-\Psi_{\star}^K(\theta)$ with respect to $\theta$. In the remainder of the paper, we assume that this matrix is invertible for all $K$. This assumption likely follows directly from Assumption~\ref{assumption3}, but since we have yet to show it, we state it as an assumption. 

\begin{assumption}\label{assumption_invertible} For all $K$, the matrices $\dot{\Psi}_{\theta^{\star}}^K$ are invertible. 
\end{assumption}  

\begin{prop}\label{prop::simple_clt1} For some fixed $K$, 
\begin{equation}
\sqrt{n}(\widehat{\theta} - \theta^{\star}) = (\dot{\Psi}_{\theta^{\star}}^K)^{-1} \sqrt{n}\Psi_n^K(\theta^{\star}) + o_p(1) \to_d \normal_{qK+p}\{0, (\dot{\Psi}_{\theta^{\star}}^K)^{-1}\Gamma_{\theta^{\star}}^K((\dot{\Psi}_{\theta^{\star}}^K)^{-1})^{\prime}\},
\notag
\end{equation} 
under $P_{\star}^K$ as $n \to \infty$. 
\end{prop}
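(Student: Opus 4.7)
The plan is the standard Z-estimator argument, leveraging the consistency already established in Lemma~\ref{lemma::consistency1} and the representation of $\Psi_n^K(\theta^{\star})$ as an i.i.d.\ sum of mean-zero martingale integrals.

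First I would apply a first-order Taylor expansion of $\Psi_n^K$ around $\theta^{\star}$. Since the $w_i(s,\theta)$ are smooth logistic functions of $\theta$ and the other ingredients ($r_j(s)$, $Z_i^{\prime}\beta_j$, $I_{W_j}(s)$) are smooth or bounded, the map $\theta \mapsto \Psi_n^K(\theta)$ is $C^1$ on the compact set $\Theta_K$, with bounded second derivatives almost surely (the covariates are bounded by Assumption~\ref{assumption3}, and the compensators are bounded on $[0,\tau]$). Writing $\dot{\Psi}_n^K(\theta)$ for the Jacobian with respect to $\theta$, there exists $\widetilde{\theta}$ on the segment between $\widehat{\theta}$ and $\theta^{\star}$ with
\begin{equation}
0 = \Psi_n^K(\widehat{\theta}) = \Psi_n^K(\theta^{\star}) + \dot{\Psi}_n^K(\widetilde{\theta})(\widehat{\theta} - \theta^{\star}).
\notag
\end{equation}

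Second, I would establish uniform convergence $\sup_{\theta\in\Theta_K}\|\dot{\Psi}_n^K(\theta) + \dot{\Psi}_{\theta}^K\| \to_p 0$ by the uniform law of large numbers (compactness of $\Theta_K$, continuity in $\theta$ for fixed data, and an integrable envelope via Assumptions~\ref{assumption1}--\ref{assumption3}), exactly as was done in the proof of Lemma~\ref{lemma::consistency1}. Consistency of $\widehat{\theta}$ then gives $\dot{\Psi}_n^K(\widetilde{\theta}) \to_p -\dot{\Psi}_{\theta^{\star}}^K$, which is invertible by Assumption~\ref{assumption_invertible}. Rearranging,
\begin{equation}
\sqrt{n}(\widehat{\theta} - \theta^{\star}) = (\dot{\Psi}_{\theta^{\star}}^K)^{-1}\sqrt{n}\Psi_n^K(\theta^{\star}) + o_p(1).
\notag
\end{equation}

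Third, I would handle the asymptotic distribution of $\sqrt{n}\Psi_n^K(\theta^{\star})$. The representation displayed just after Lemma~\ref{lemma::consistency1} expresses $\Psi_n^K(\theta^{\star})$ as an i.i.d.\ average of the vectors $\xi_i \defeq \int_0^{\tau}h(s,X_i,Z_i,\theta^{\star})\,\dd M_i(s,\theta^{\star})$ (plus $o_p(n^{-1/2})$). Each $\xi_i$ has mean zero since $M_i(\cdot,\theta^{\star})$ is a $P_{\star}^K$-martingale starting at zero and $h$ is predictable, and is square-integrable by the boundedness of the covariates and the predictable variation. The covariance matrix of $\xi_i$ equals $\Gamma_{\theta^{\star}}^K$ by the optional sampling/ compensator identity $\E_{\theta^{\star}}\,\xi_i\xi_i^{\prime} = \E_{\theta^{\star}}\int_0^{\tau}hh^{\prime}Y(s)w(s,\theta^{\star})Z^{\prime}\beta^{\star}(s)\,\dd s$, after conditioning on the covariates and using $y(s;X,Z) = \E_{\theta^{\star}}\{Y(s)\mid X,Z\}$. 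Applying the ordinary multivariate CLT for i.i.d.\ vectors gives $\sqrt{n}\Psi_n^K(\theta^{\star}) \to_d \normal_{qK+p}(0,\Gamma_{\theta^{\star}}^K)$, and Slutsky combined with the linearisation above yields the stated limit.

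The main obstacle I anticipate is the Taylor remainder bookkeeping: showing that $\dot{\Psi}_n^K(\widetilde{\theta}) \to_p -\dot{\Psi}_{\theta^{\star}}^K$ rather than merely $\dot{\Psi}_n^K(\theta^{\star})$ requires the uniform LLN on $\Theta_K$, and one must verify the dominating envelope for $\partial^2 \Psi_n^K/\partial\theta\,\partial\theta^{\prime}$ coming from derivatives of the logistic weights $w_i(s,\theta)$; this is routine because $w_i$ and its derivatives are uniformly bounded in $(s,\theta)$ on $[0,\tau]\times\Theta_K$ thanks to bounded covariates and the compactness of $\Theta_K$, but it is the step most prone to tedious notational errors. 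Everything else is either already proved or a direct application of standard i.i.d.\ asymptotics.
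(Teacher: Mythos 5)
Your proof is correct, but it takes a different route from the paper's. The paper disposes of the proposition in three lines by invoking the Z-estimator master theorem of van der Vaart (Theorem~5.21, p.~52), whose hypotheses are (i) consistency of $\widehat{\theta}$ (Lemma~\ref{lemma::consistency1}), (ii) a Lipschitz-in-$\theta$ condition on the individual summands $\int_0^{\tau}h(s,X_i,Z_i,\theta)\,\dd M_i(s,\theta)$ with square-integrable Lipschitz constant, and (iii) differentiability of the population map $\theta\mapsto\Psi_{\star}^K(\theta)$ at $\theta^{\star}$ with invertible derivative (Assumption~\ref{assumption_invertible}); the Lipschitz condition is verified by inspecting the entries of $\dot{\Psi}_{\theta}^K$ in Appendix~\ref{app::Psi_dot} and noting the parameters enter linearly or through functions bounded by $1$. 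You instead run the classical Taylor-expansion argument: expand $\Psi_n^K$ around $\theta^{\star}$, prove a uniform LLN for the empirical Jacobian, and conclude via Slutsky. Your route demands more smoothness (an integrable envelope for the second derivatives of the summands, and the usual caveat that the vector-valued mean value theorem must be applied row by row with coordinate-dependent intermediate points), but all of this is available here since the weights $w_i(s,\theta)$ are smooth logistic functions and the covariates and $\Theta_K$ are bounded; in exchange your argument is self-contained and does not offload the linearisation to a master theorem. The third step of your proof — the i.i.d.\ CLT for $\sqrt{n}\Psi_n^K(\theta^{\star})$ with covariance $\Gamma_{\theta^{\star}}^K$ computed from the predictable variation $\E_{\theta^{\star}}\int_0^{\tau}hh^{\prime}Y(s)w(s,\theta^{\star})Z^{\prime}\beta^{\star}(s)\,\dd s$ — is implicit in the paper's appeal to Theorem~5.21 and is carried out correctly; note that at $\theta=\theta^{\star}$ the representation $\Psi_n^K(\theta^{\star}) = n^{-1}\sum_i\int_0^{\tau}h(s,X_i,Z_i,\theta^{\star})\,\dd M_i(s,\theta^{\star})$ is in fact exact, so no remainder bookkeeping is needed there.
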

\begin{proof} By~\citet[Theorem~5.21, p.~52]{vanderVaart1998} it suffices to show the terms in the sum $\Psi_n^K(\theta)$, that is $\int_0^{\tau}h(s,X_i,Z_i,\theta)\,\dd M_i(s,\theta)$, are Lipschitz, and that $\dot{\Psi}_{\theta^{\star}}^K$ is invertible. From Appendix~\ref{app::Psi_dot}, we see that in the matrix $\dot{\Psi}_{\theta}^K$ the parameters enter either linearly or through functions that are bounded by $1$, so $\sup_{\theta \in \Theta_K}\norm{\dot{\Psi}_{\theta}^K} < \infty$ by Assumptions~\ref{assumption1}--\ref{assumption2}, and the Lipschitz condition follows. 
\end{proof}

Recall that $r_{j}(s) = \int_{W_j}I\{s \geq u\}\,\dd u$ as defined in~\eqref{eq::rj_funcs}, and introduce the $(q + p) \times (qK+p)$ matrix $H_t$ that for $\theta \in \Theta_K$ is such that 
\begin{equation}
H_t \theta = (B_0(t),\ldots,B_{q-1}(t),\gamma_0,\ldots,\gamma_{p-1})^{\prime}.
\notag
\end{equation} 
As an example, suppose $p =2$, $q = 2$, and $K = 3$. In this case 
\begin{equation}
\theta = (\beta_{0,1},\beta_{1,1},\beta_{0,2},\beta_{1,2},\beta_{0,3},\beta_{1,3},\gamma_0,\gamma_1)^{\prime}, 
\notag
\end{equation}
and   
\begin{equation}
H_{t} =  
\begin{pmatrix}
r_1(t) & 0 & r_2(t) & 0 & r_3(t) & 0 & 0 & 0\\
0 & r_1(t) & 0 &r_2(t) & 0 & r_3(t) & 0 & 0 \\
0 & 0 & 0 & 0 & 0 & 0 & 1 & 0\\
0 & 0 & 0 & 0 & 0 & 0 & 0 & 1
\end{pmatrix}.
\notag
\end{equation}
\begin{corollary}\label{corollary::simple_clt2} For fixed $K$ and some fixed $0 <t \leq \tau$ 
\begin{equation}
\sqrt{n}H_t(\widehat{\theta} - \theta^{\star}) = 
\sqrt{n}
\begin{pmatrix}
\widehat{B}(t) - B^{\star}(t)\\
\widehat{\gamma} - \gamma^{\star}
\end{pmatrix}
\overset{d}\to \normal_{q+p}\{0, H_t(\dot{\Psi}_{\theta^{\star}}^K)^{-1}\Gamma_{\theta^{\star}}^K((\dot{\Psi}_{\theta^{\star}}^K)^{-1})^{\prime}H_t^{\prime}\},
\label{eq::simple_clt2}
 \end{equation}
under $P_{\star}^K$ as $n\to \infty$. 
\end{corollary}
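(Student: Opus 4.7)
The plan is to deduce the corollary directly from Proposition~\ref{prop::simple_clt1} by noting that $H_t$ is a deterministic linear transformation and that the vector $(\widehat{B}(t)^{\prime},\widehat{\gamma}^{\prime})^{\prime}$ is exactly $H_t\widehat{\theta}$. Thus nothing new is needed about the behaviour of $\widehat{\theta}$ itself; the asymptotic covariance just gets sandwiched between $H_t$ and $H_t^{\prime}$ by the usual rule for linear images of multivariate normals.

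First I would verify the identity $H_t\widehat{\theta} = (\widehat{B}_0(t),\ldots,\widehat{B}_{q-1}(t),\widehat{\gamma}_0,\ldots,\widehat{\gamma}_{p-1})^{\prime}$. For each component $l = 0,\ldots,q-1$ we have, by the locally constant form of $\widehat{\beta}$,
\begin{equation}
\widehat{B}_l(t) = \int_0^t \widehat{\beta}_l(s)\,\dd s = \sum_{j=1}^K \widehat{\beta}_{l,j}\int_0^t I_{W_j}(s)\,\dd s = \sum_{j=1}^K \widehat{\beta}_{l,j}\,r_j(t),
\notag
\end{equation}
which is precisely the $l$th row of $H_t$ acting on $\widehat{\theta}$ under the chosen indexing, as illustrated by the $p=q=2$, $K=3$ example in the statement. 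The same identity holds with $\widehat{\beta}$ and $\widehat{\gamma}$ replaced by $\beta^{\star}$ and $\gamma^{\star}$, so $\sqrt{n}H_t(\widehat{\theta} - \theta^{\star})$ coincides with the vector on the left-hand side of~\eqref{eq::simple_clt2}.

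Once this identification is in place, Proposition~\ref{prop::simple_clt1} supplies the joint convergence
\begin{equation}
\sqrt{n}(\widehat{\theta} - \theta^{\star}) \overset{d}\to \normal_{qK+p}\{0,\,(\dot{\Psi}_{\theta^{\star}}^K)^{-1}\Gamma_{\theta^{\star}}^K((\dot{\Psi}_{\theta^{\star}}^K)^{-1})^{\prime}\}.
\notag
\end{equation}
Since $H_t$ is a fixed (nonrandom) matrix, the continuous mapping theorem, applied to the linear map $x\mapsto H_t x$, together with the well-known fact that a linear image of a centred Gaussian with covariance $V$ is centred Gaussian with covariance $H_t V H_t^{\prime}$, yields the limit asserted in~\eqref{eq::simple_clt2}.

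There is no real obstacle here beyond bookkeeping: the only thing to check with care is that the rows of $H_t$ are laid out consistently with the indexing of $\theta$, so that the algebraic identity $H_t\widehat{\theta} = (\widehat{B}(t)^{\prime},\widehat{\gamma}^{\prime})^{\prime}$ holds as written. Everything else is a one-line consequence of the proposition plus the linearity of the map defining $(\widehat{B}(t),\widehat{\gamma})$ as a function of $\widehat{\theta}$.
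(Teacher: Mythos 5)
Your proposal is correct and is essentially the paper's own argument: the paper simply invokes the delta-method applied to the linear map $\theta\mapsto H_t\theta$ on top of Proposition~\ref{prop::simple_clt1}, which for a fixed matrix reduces exactly to the continuous-mapping/linear-image-of-a-Gaussian step you spell out. Your explicit verification that $H_t\widehat{\theta}=(\widehat{B}(t)^{\prime},\widehat{\gamma}^{\prime})^{\prime}$ is just the bookkeeping the paper leaves implicit.
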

\begin{proof} This is direct from Proposition~\ref{prop::simple_clt1} via an application of the delta-method.
\end{proof}
The next lemma presents an approximation that is key to what follows. Recall from~\eqref{eq::betahat2} that $G_{n,j}(\theta) = n^{-1}\sum_{i=1}^n Z_iZ_i^{\prime}r_j(T_i)\{\delta_i + (1-\delta_i)w_i(T_i,\theta)\}$ for $j = 1,\ldots,K$. For each $j$, define 
\begin{equation}
\begin{split}
G_j(\theta) &= \E\, ZZ^{\prime}\int_{W_j} y(s;X,Z)w(s,\theta)\,\dd s\\ 
& \quad +\E\, ZZ^{\prime}\int_0^{\tau} r_j(s)\{1 - w(s,\theta)\} y(s;X,Z) \{w(s,\theta^{\star})Z^{\prime}\beta^{\star}(s) - w(s,\theta)Z^{\prime}\beta(s)\}\,\dd s.
\end{split}
\label{eq::Gj_approx}
\end{equation}
\begin{lemma}\label{lemma::Gapprox} For $j = 1,\ldots,K$, $G_{n,j}(\theta) = G_j(\theta) + o_p(1)$, as $n \to \infty$, uniformly in $\theta$.   
\end{lemma}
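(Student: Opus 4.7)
The plan is to (i) apply a uniform law of large numbers to reduce the claim to showing $\E_{\theta^{\star}}G_{n,j}(\theta) = G_j(\theta)$ pointwise in $\theta$, and then (ii) verify this identity by an integration-by-parts manipulation combined with the compensator of $N$ under $P_{\star}^K$.

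Step (i) is routine and essentially the same argument as in the proof of Lemma~\ref{lemma::consistency1}. By Assumption~\ref{assumption3} the $Z_i$ are bounded, $r_j(T_i) \leq \tau/K$, and $w_i(T_i,\theta) \in [0,1]$, so the summands defining $G_{n,j}(\theta)$ are jointly continuous in $\theta$ and dominated by an integrable envelope that does not depend on $\theta$. Compactness of $\Theta_K$ together with the uniform strong law of large numbers of \citet[Theorem~16(a), p.~108]{ferguson1996course} then delivers $\sup_{\theta \in \Theta_K}\norm{G_{n,j}(\theta) - \E_{\theta^{\star}}G_{n,j}(\theta)} = o_p(1)$.

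For step (ii), I would condition on $(X,Z)$ and handle the two pieces of $r_j(T)\{\delta + (1-\delta)w(T,\theta)\}$ separately. The first, $r_j(T)\delta = \int_0^{\tau}r_j(s)\,\dd N(s)$, is treated via the compensator $Y(s)w(s,\theta^{\star})Z^{\prime}\beta^{\star}(s)\,\dd s$ of $\dd N(s)$ under $P_{\star}^K$. For the second, $r_j(T)(1-\delta)w(T,\theta)$, I use that $r_j^{\prime}(s) = I_{W_j}(s)$ and that the logistic form of $w$ gives $\dot{w}(s,\theta) = -w(s,\theta)\{1-w(s,\theta)\}Z^{\prime}\beta(s)$. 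Since $H_c$ is supported on $(0,\tau]$ we have $T \leq \tau$ a.s., and since $r_j(0) = 0$, the product rule yields
\begin{equation*}
r_j(T)w(T,\theta) = \int_0^{\tau}Y(s)\bigl[I_{W_j}(s)w(s,\theta) - r_j(s)w(s,\theta)\{1-w(s,\theta)\}Z^{\prime}\beta(s)\bigr]\,\dd s.
\end{equation*}
Subtracting $r_j(T)\delta w(T,\theta) = \int_0^{\tau}r_j(s)w(s,\theta)\,\dd N(s)$ from this and combining with the first piece gives the pathwise identity
\begin{equation*}
r_j(T)\{\delta + (1-\delta)w(T,\theta)\} = \int_0^{\tau}r_j(s)\{1 - w(s,\theta)\}\,\dd N(s) + \int_{W_j}Y(s)w(s,\theta)\,\dd s - \int_0^{\tau}Y(s)r_j(s)w(s,\theta)\{1-w(s,\theta)\}Z^{\prime}\beta(s)\,\dd s.
\end{equation*}
Taking conditional expectation under $P_{\star}^K$, using $\E_{\theta^{\star}}[\dd N(s)\mid X,Z] = y(s;X,Z)w(s,\theta^{\star})Z^{\prime}\beta^{\star}(s)\,\dd s$ and $\E_{\theta^{\star}}[Y(s)\mid X,Z] = y(s;X,Z)$, multiplying by $ZZ^{\prime}$ and averaging over $(X,Z)$ collects the two $\dd s$-integrals into precisely the second summand of $G_j(\theta)$, while the $\int_{W_j}$ term reproduces its first summand.

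The main obstacle will be the integration-by-parts step: one must be careful to exploit $T \leq \tau$ a.s.\ so that the boundary term reduces to $r_j(T)w(T,\theta)$, and to combine the $\dot{w}$-contribution with the compensator increment in exactly the right way so that the factor $\{1 - w(s,\theta)\}\{w(s,\theta^{\star})Z^{\prime}\beta^{\star}(s) - w(s,\theta)Z^{\prime}\beta(s)\}$ from $G_j(\theta)$ emerges. Once this identity is in hand, the remainder is bookkeeping and an application of Fubini, both of which are justified by the uniform boundedness already used in step (i).
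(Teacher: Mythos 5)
Your proof is correct and follows essentially the same route as the paper: the pathwise identity you derive for $r_j(T)\{\delta + (1-\delta)w(T,\theta)\}$ via the product rule for $r_j(t)w(t,\theta)$ is exactly the paper's decomposition of $G_{n,j}(\theta)$ into the $\int_{W_j}Y_i w_i\,\dd s$ term, a martingale term, and a compensator-difference term (cf.\ Appendix~\ref{app::some_results}), and the uniform law of large numbers step is the same appeal to \citet[Theorem~16(a)]{ferguson1996course} used in Lemma~\ref{lemma::consistency1}. The only cosmetic difference is that you apply the ULLN first and then identify the expectation, whereas the paper decomposes pathwise first; the content is identical.
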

\begin{proof} Write
\begin{equation}
\begin{split}
G_{n,j}(\theta) & = \frac{1}{n}\sum_{i=1}^nZ_iZ_i^{\prime}\big[\int_{W_j}Y_i(s)w_i(s,\theta)\,\dd s
+ \int_0^{\tau} r_{j}(s) \{1 - w_i(s,\theta)\}\,\dd M_i(s,\theta^{\star}) \\
& \qquad \qquad \qquad \qquad + \int_0^{\tau} r_{j}(s) \{1 - w_i(s,\theta)\}Y_i(s)Z_i^{\prime}\{w_i(s,\theta^{\star})\beta^{\star}(s) - w_i(s,\theta)\beta(s)\}\,\dd s\big].
\end{split}
\notag
\end{equation} 
and note that $|r_j(s)| \leq |v_{j} - v_{j-1}|$. The claim follows from the same arguments used to prove Lemma~\ref{lemma::consistency1}.
\end{proof}
Note also that since $\theta \mapsto G_{n,j}(\theta)$ is continuous and $\widehat{\theta}$ is consistent for $\theta^{\star}$,
\begin{equation}
\begin{split}
G_{n,j}(\widehat{\theta}) & = G_{n,j}(\theta^{\star}) + o_p(1),\quad \text{as $n \to \infty$},\\
\end{split}
\notag
\end{equation}
and we get from Lemma~\ref{lemma::Gapprox} that $G_{n,j}(\theta^{\star})  = G_j(\theta^{\star}) + o_p(1) = \E\, ZZ^{\prime} \int_{W_j}y(s;X,Z)w(s,\theta^{\star})\,\dd s + o_p(1)$, where $G_j(\theta)$ is defined in~\eqref{eq::Gj_approx} (see \citet[p.~108]{ferguson1996course}). Also, because $\theta^{\star}$ is the probability limit of $\widehat{\theta}$, we can use the expression for $\widehat{\beta}_j$ in~\eqref{eq::betahat1}, continuous mapping, and the Cram{\'e}r--Slutsky rules to see that
\begin{equation}
\begin{split}
\widehat{\beta}_j & = G_{n,j}(\widehat{\theta})^{-1}\frac{1}{n}\sum_{i=1}^n Z_i \int_{W_j} \dd N_i(s)\\
& = G_{n,j}(\widehat{\theta})^{-1}\frac{1}{n}\sum_{i=1}^n Z_i \{\int_{W_j} \dd M_i(s,\theta^{\star}) + Y_i(s) w(s,\theta^{\star})Z_i^{\prime}\beta^{\star}_j \,\dd s\}\\
& = G_{j}(\theta^{\star})^{-1}\frac{1}{n}\sum_{i=1}^n Z_i \int_{W_j} Y_i(s) w(s,\theta^{\star})Z_i^{\prime}\beta^{\star}_j \,\dd s + o_p(1) = \beta_j^{\star} + o_p(1), \quad\text{for $j = 1,\ldots,K$}.
\end{split}
\notag
\end{equation}
From this expression it is immediate that 
\begin{equation}
\begin{split}
\widehat{\beta}_j - \beta_j^{\star} & = G_{j}(\theta^{\star})^{-1}\frac{1}{n}\sum_{i=1}^n Z_i\int_{W_j}\,\dd M_i(s) + o_p(1), \quad\text{for $j = 1,\ldots,K$},
\end{split}
\label{eq::beta_star_nice}
\end{equation}
as $n \to \infty$. This expression mimics the exact martingale expression for the corresponding difference in the standard survival case (i.e.~$\pi \equiv 1$), as explored in Appendix~\ref{app::aalens_linear_pfs}, and is key to the theory of the next section.

\section{Shrinking intervals}\label{sec::shrink}
In this section we study the result $H_t \sqrt{n}(\widehat{\theta} - \theta^{\star}) \to_d \normal_{q+p}\{0, H_t(\dot{\Psi}_{\theta^{\star}}^K)^{-1}\Gamma_{\theta^{\star}}^K((\dot{\Psi}_{\theta^{\star}}^K)^{-1})^{\prime}H_t^{\prime}\}$ from~\eqref{eq::simple_clt2} when the mesh size tends to zero, that is, when $K \to \infty$, with the aim of arriving at a process convergence result for our estimators. For a given partition, $\Delta_j = v_{j} - v_{j-1}$ for $j = 1,\ldots,K$, so that $K \to \infty$ means that $\max_{j \leq K}\Delta_j \to 0$. We also assume that $v_{j} = j\tau/K$ for all $j$. In a first part we work under the sequence $(P_{\star}^K)_{K}$ of distributions. Subsequently, we adjust back to assuming that the data stem from the distribution $P_{\circ}$ associated with $\gamma^{\circ}$ and the continuous regression coefficients $\beta_0^{\circ}(t),\ldots,\beta_{q-1}^{\circ}(t)$, as defined in Assumptions~\ref{assumption1}--\ref{assumption2}.  

\subsection{Shrinking intervals and triangular arrays}\label{sec::cov_shrink}
For the matrices introduced in Proposition~\ref{prop::simple_clt1}, write 
\begin{equation}
\dot{\Psi}_{\theta}^K 
= 
\begin{pmatrix}
\dot{\Psi}_{\theta,00}^K & \dot{\Psi}_{\theta,01}^K\\
\dot{\Psi}_{\theta,10}^K & \dot{\Psi}_{\theta,11}^K
\end{pmatrix},
\quad \text{and}\quad 
(\dot{\Psi}_{\theta}^K)^{-1} 
= 
\begin{pmatrix}
\dot{\Psi}_{\theta}^{K,00} & \dot{\Psi}_{\theta}^{K,01}\\
\dot{\Psi}_{\theta}^{K,10} & \dot{\Psi}_{\theta}^{K,11}
\end{pmatrix},
\label{eq::dotPsi}
\end{equation}
where $\dot{\Psi}_{\theta,00}^K$ is of dimension $qK\times qK$; $\dot{\Psi}_{\theta,01}^K$ is $qK\times p$; $\dot{\Psi}_{\theta,10}^K$ is $p\times qK$; and $\dot{\Psi}_{\theta,11}^K$ is $p\times p$, with the same dimensions for the blocks of the inverse. See Appendix~\ref{app::Psi_dot} for more detail on these two matrices. Denote by $h_{1:K}(s,x,z,\theta)$ the vector containing the first $qK$ elements of the vector $h(s,x,z,\theta)$ introduced in~\eqref{eq::h_func}, that is 
\begin{equation}
h_{1:K}(s,x,z,\theta) =
\begin{pmatrix}
h_1(s,x,z,\theta)\\
\vdots\\
h_K(s,x,z,\theta)
\end{pmatrix}
= 
\begin{pmatrix}
zg_1(s,x,z,\theta)\\
\vdots\\
zg_K(s,x,z,\theta)
\end{pmatrix}.
\notag
\end{equation}
Define the sequence of processes
\begin{equation} 
\Uscr_n^{K} = \sqrt{n}\{(\widehat{B} - B^{\star})^{\prime},(\widehat{\gamma} - \gamma^{\star})^{\prime}\}^{\prime},
\label{eq::Z_n_K}
\end{equation} 
so that $\Uscr_n^{K}(t) = \sqrt{n}H_t(\widehat{\theta} - \theta^{\star})$ when evaluated in $t$. Assume for simplicity, and without loss of generality, that $t = v_j$ for some $1 \leq j \leq K$, then, using Proposition~\ref{prop::simple_clt1} and the expression in~\eqref{eq::beta_star_nice}, we can write    
\begin{equation}
\Uscr_{n}^K(t) 
\doteq \frac{1}{\sqrt{n}} \sum_{i=1}^n 
\begin{pmatrix} 
\sum_{j : v_j \leq t}G_j(\theta^{\star})^{-1}Z_i\int_{W_j}\dd M_i(s,\theta^{\star})\Delta_j \\
 \int_{0}^{\tau}\{\dot{\Psi}_{\theta^{\star}}^{K,10}h_{1:K}(s,X_i,Z_i,\theta^{\star})
 + \dot{\Psi}_{\theta^{\star}}^{K,11} h_{K+1}(s,X_i,Z_i,\theta^{\star})\}
\,\dd M_i(s,\theta^{\star})
\end{pmatrix},
\label{eq::MG_form_explicit}
\notag
 \end{equation}
as $n$ tends to infinity, where $a \doteq b$ means that $a = b + o_p(1)$. This representation of $\Uscr_n^K$ is used actively when studying the variance process $\langle \Uscr_n^K,\Uscr_n^K\rangle_t $ as $n$ and $K$ tends to infinity. Define the matrix function
\begin{equation}
\Sigma_{t}^K  
= 
\begin{pmatrix}
\Sigma_{t,00}^K & \Sigma_{t,01}^K\\
\Sigma_{t,10}^K & \Sigma_{11}^K
\end{pmatrix}
= \langle \Uscr_n^K,\Uscr_n^K\rangle_t,
\notag
\end{equation} 
where $\Sigma_{t,00}^K = n\langle \widehat{B} - B^{\star},\widehat{B} - B^{\star}\rangle_t$ and $\Sigma_{11}^K = n\langle\widehat{\gamma} - \gamma^{\star},\widehat{\gamma} - \gamma^{\star}\rangle_{\tau}$ are $q\times q$ and $p\times p$ matrices, respectively, while $\Sigma_{t,01}^K$ is $q \times p$ and $\Sigma_{t,10}^K$ is $p\times q$. The limit of $\Sigma_{t}^K$ as both $n$ and $K$ tend to infinity is denoted $\Sigma_t$, with $\Sigma_{t,00}$, $\Sigma_{t,01}$, $\Sigma_{t,10}$, and $\Sigma_{t,11}$ being its four blocks. Define the functions 
\begin{equation}
\begin{split}
\lambda_{\theta}(s) & = y(s;x,z)w(s,\theta),\quad \text{for $j = 1,\ldots,K$},\\
\mu_{\theta}(s) & = y(s;x,z)w(s,\theta)\{1 - w(s,\theta)\},\quad \text{for $j = 1,\ldots,K$},\\
\nu_{\theta}(s) & = y(s;x,z)w(s,\theta)\{1 - w(s,\theta)\}z^{\prime}\beta_j,\quad \text{for $j = 1,\ldots,K$},\\
\xi_{\theta}(s) & = y(s;x,z)w(s,\theta)\{1 - w(s,\theta)\}^2z^{\prime}\beta(s),
\end{split}
\label{eq::lambda_mu_nu_xi}
\end{equation}
and recall that all of these are bounded. We also need the function 
\begin{equation}
G(s,\theta) = \E\, ZZ^{\prime}\lambda_{\theta}(s) = \E\, ZZ^{\prime}y(s;X,Z)w(s,\theta).
\label{eq::Gfunc}
\end{equation}
Since $s \mapsto y(s;x,z)$ and $s \mapsto w(s,\theta)$ have bounded derivatives for all parameter values, and because the covariates are bounded with probability one, we see that
\begin{equation}
G_j(\theta^{\star}) = G(v_{j-1},\theta)\Delta_j + O(\Delta_j^2), \quad\text{for $j = 1,\ldots,K$}, 
\label{eq::Gjapprox}
\end{equation}
as $K \to \infty$. In Lemma~\ref{lemma:Psi11_approx} it is shown that for the lower right corner $\dot{\Psi}_{\theta^{\star}}^{K,11}$ of $\dot{\Psi}_{\theta^{\star}}^{-1}$ in~\eqref{eq::dotPsi}, 
\begin{equation}
\dot{\Psi}_{\theta^{\star}}^{K,11} = \dot{\Psi}^{11} + O(1/K),
\notag
\end{equation}
as the mesh size tends to zero, where $\dot{\Psi}^{11}$ is the inverse of the $p \times p$ matrix
\begin{equation}
(\dot{\Psi}^{11})^{-1} = \int_{0}^{\tau}\E\,XX^{\prime}\xi_{\varphi^{\circ}}(s)\,\dd s -  \int_{0}^{\tau}\E\,\{XZ^{\prime}\nu_{\varphi^{\circ}}(s)\}G(s,\varphi^{\circ})^{-1}\E\,\{ZX^{\prime}\mu_{\varphi^{\circ}}(s)\}\,\dd s. 
\notag
\end{equation}   
We have the following result.
\begin{lemma}\label{lemma::sigma_matrix_explicit} For fixed $t$, as $n \to \infty$ and $K \to \infty$, the matrix $\Sigma_{t}^K \to_p \Sigma_{t}$, with blocks 
\begin{equation}
\begin{split}
\Sigma_{t,00}^{K} \overset{p}\to \Sigma_{t,00} & = \int_0^t G(s,\varphi^{\circ})^{-1}\E\, Z\, y(s;X,Z) w(s,\varphi^{\circ}) Z^{\prime}\beta^{\circ}(s)Z^{\prime}G(s,\varphi^{\circ})^{-1}\,\dd s,\\
\Sigma_{11}^{K} \overset{p}\to \Sigma_{11} & = \dot{\Psi}^{11}\E\,\int_{0}^{\tau}  A(s,\varphi^{\circ})A(s,\varphi^{\circ})^{\prime}y(s,X,Z)w(s,\varphi^{\circ})Z^{\prime}\beta^{\circ}(s)\,\dd s\,(\dot{\Psi}^{11})^{\prime},\\   
\Sigma_{t,01}^{K} \overset{p}\to \Sigma_{t,01} & = \int_0^t G(s,\varphi^{\circ})^{-1} 
\E\, Z A(s,\varphi^{\circ})^{\prime} y(s;X,Z) Z^{\prime}\beta^{\circ}(s)\,\dd s\,(\dot{\Psi}^{11})^{\prime}, 
\end{split}
\notag
\end{equation}
and $\Sigma_{t,10}^{K} \to_p \Sigma_{t,10} = (\Sigma_{t,01})^{\prime}$, where $A(s,\varphi) \in \real^p$ is the function
\begin{equation}
\begin{split}
A(s,\varphi) = \E\,XZ^{\prime}\nu_{\varphi}(s)G(s,\varphi)^{-1} Z - X\{1 - w(s,\varphi)\}.
\end{split}
\notag
\end{equation}
\end{lemma}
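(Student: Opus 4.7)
The plan is to read off $\Sigma_t^K = \langle \Uscr_n^K, \Uscr_n^K\rangle_t$ directly from the martingale representation~\eqref{eq::MG_form_explicit} block by block, using the independence of $M_1,\ldots,M_n$ under $P_\star^K$ together with $\dd\langle M_i\rangle(s,\theta^{\star}) = Y_i(s)w_i(s,\theta^{\star})Z_i^{\prime}\beta^{\star}(s)\,\dd s$. Each block then reduces to an $n^{-1}$-average of i.i.d.~terms, to which the uniform law of large numbers and Assumptions~\ref{assumption1}--\ref{assumption3} apply; the mesh refinement $K\to\infty$ is absorbed by the approximation~\eqref{eq::Gjapprox}, by Lemma~\ref{lemma:Psi11_approx}, and by $\beta^{\star}(s)\to\beta^{\circ}(s)$ uniformly on $[0,\tau]$, which replaces $\theta^{\star}$ by $\varphi^{\circ}$ in the limit.

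For the block $\Sigma_{t,00}^K$, I would feed~\eqref{eq::Gjapprox} into the top row of~\eqref{eq::MG_form_explicit}: the $\Delta_j$ factors cancel since $G_j(\theta^{\star})^{-1}\Delta_j = G(v_{j-1},\theta^{\star})^{-1} + O(\Delta_j)$, so $\sqrt{n}\{\widehat{B}(t) - B^{\star}(t)\}$ is, to leading order, the ordinary integrated martingale $n^{-1/2}\sum_i\int_0^t G_K(s,\theta^{\star})^{-1}Z_i\,\dd M_i(s,\theta^{\star})$, where $G_K(\cdot,\theta^{\star})$ is the left-continuous step-function approximation to $G(\cdot,\varphi^{\circ})$. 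Its predictable quadratic variation is $n^{-1}\sum_i \int_0^t G_K(s,\theta^{\star})^{-1}Z_iZ_i^{\prime}G_K(s,\theta^{\star})^{-1}Y_i(s)w_i(s,\theta^{\star})Z_i^{\prime}\beta^{\star}(s)\,\dd s$, and the LLN together with the uniform continuity of $G(\cdot,\varphi^{\circ})$ on $[0,\tau]$ deliver the stated $\Sigma_{t,00}$.

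For the block $\Sigma_{11}^K$, the standard block-inversion formula gives $\dot{\Psi}_{\theta^{\star}}^{K,10} = -\dot{\Psi}_{\theta^{\star}}^{K,11}\dot{\Psi}_{\theta^{\star},10}^K(\dot{\Psi}_{\theta^{\star},00}^K)^{-1}$, so the integrand in the lower component of~\eqref{eq::MG_form_explicit} factors as $\dot{\Psi}_{\theta^{\star}}^{K,11}\{h_{K+1}(s,X_i,Z_i,\theta^{\star}) - \dot{\Psi}_{\theta^{\star},10}^K(\dot{\Psi}_{\theta^{\star},00}^K)^{-1}h_{1:K}(s,X_i,Z_i,\theta^{\star})\}$. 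Lemma~\ref{lemma:Psi11_approx} yields $\dot{\Psi}_{\theta^{\star}}^{K,11}\to\dot{\Psi}^{11}$; the remaining task is to show that $\dot{\Psi}_{\theta^{\star},10}^K(\dot{\Psi}_{\theta^{\star},00}^K)^{-1}h_{1:K}(s,X,Z,\theta^{\star})\to\E\{XZ^{\prime}\nu_{\varphi^{\circ}}(s)\}G(s,\varphi^{\circ})^{-1}Z$. Using the expressions for $\dot{\Psi}^K_{\theta,\bullet\bullet}$ in Appendix~\ref{app::Psi_dot}, the $j$-th diagonal block of $\dot{\Psi}_{\theta^{\star},00}^K$ equals $G_j(\theta^{\star}) + O(\Delta_j^2) = G(v_{j-1},\varphi^{\circ})\Delta_j + O(\Delta_j^2)$, its $(j,k)$ off-diagonal block is $O(\Delta_j\Delta_k)$, and the $j$-th block of $\dot{\Psi}_{\theta^{\star},10}^K$ is $\E\{XZ^{\prime}\nu_{\varphi^{\circ}}(v_{j-1})\}\Delta_j + O(\Delta_j^2)$. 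Inverting the approximately block-diagonal $\dot{\Psi}_{\theta^{\star},00}^K$ cancels the $\Delta_j$'s, and the resulting Riemann sum converges to the claimed limit; the braced integrand equals $-A(s,\varphi^{\circ})$, and the LLN applied to the predictable quadratic variation yields $\Sigma_{11}$ since $(-A)(-A)^{\prime} = AA^{\prime}$. The cross block $\Sigma_{t,01}^K$ is then the predictable quadratic covariation of the two component martingales and combines the formulas from the previous two paragraphs directly.

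The step I expect to require the most care is the block inversion of the $qK\times qK$ matrix $\dot{\Psi}_{\theta^{\star},00}^K$: one has to verify that it really is asymptotically block-diagonal in operator norm as $K\to\infty$, so that the cross-contributions between distinct mesh intervals do not accumulate. I would handle this by Taylor-expanding the generic block of $\dot{\Psi}_{\theta^{\star},00}^K$ in the mesh size and bounding the aggregate off-diagonal error by $\sum_{j\neq k}\Delta_j\Delta_k\leq (\sum_j\Delta_j)^2 = \tau^2$ times a uniformly small factor, mirroring the strategy behind Lemma~\ref{lemma:Psi11_approx}.
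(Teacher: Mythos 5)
Your proposal is correct and follows essentially the same route as the paper's proof: both read off $\Sigma_t^K$ block by block from the predictable quadratic variation of the martingale representation of $\Uscr_n^K$, apply the law of large numbers, and pass $K\to\infty$ via the approximation~\eqref{eq::Gjapprox}, Lemma~\ref{lemma:Psi11_approx}, and Riemann-sum convergence, with the step you flag as delicate (the asymptotic block-diagonality of $\dot{\Psi}_{\theta^{\star},00}^K$ and the identity $\dot{\Psi}_{\theta^{\star}}^{K,10}h_{1:K}\approx -\dot{\Psi}^{11}\E\{XZ^{\prime}\nu_{\varphi^{\circ}}(s)\}G(s,\varphi^{\circ})^{-1}Z$) being exactly what the paper establishes in Appendix~\ref{app::Psi_dot} and Lemma~\ref{lemma::dotpsi_limit}. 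You merely make the block-inversion argument more explicit in the main text than the paper does, which is a presentational rather than a substantive difference.
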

\begin{proof} 
Since $\theta \mapsto G(s,\theta)$ and $\theta \mapsto w(s,\theta)$ are continuous and $H_t \theta^{\star} \to (B^{\circ}(t),\gamma^{\circ})^{\prime}$ as $K \to \infty$, we have that $G(s,\theta^{\star}) \to G(s,\varphi^{\circ})$ and $w(s,\theta^{\star}) \to w(s,\varphi^{\circ})$. This is used without further comment in the following.  Write $\langle \widehat{B} - B^{\star},\widehat{B} - B^{\star}\rangle_{W_j}=\langle \widehat{B} - B^{\star},\widehat{B} - B^{\star}\rangle_{v_j} - \langle \widehat{B} - B^{\star},\widehat{B} - B^{\star}\rangle_{v_{j-1}}$. Over an interval $W_j$, as $n \to \infty$,  
\begin{equation} 
\begin{split}
n\langle \widehat{B} - B^{\star},\widehat{B} - B^{\star}\rangle_{W_j}  
& = \frac{1}{n}\sum_{i=1}^n G_j(\theta^{\star})^{-1} Z_i\int_{W_j} Y_i(s) w_i(s,\theta^{\star}) Z_i^{\prime}\beta^{\star}_j\,\dd s \,Z_i^{\prime}G_j(\theta^{\star})^{-1}\Delta_j^2\\
& = G_j(\theta^{\star})^{-1} \E\,Z\,\int_{W_j} y(s;X,Z) w(s,\theta^{\star}) Z^{\prime}\beta^{\star}_j\,\dd s \,Z^{\prime}G_j(\theta^{\star})^{-1}\Delta_j^2 + o_p(1).
\end{split}
\notag
\end{equation}
From~\eqref{eq::Gjapprox} we get $G_j(\theta^{\star})  = G(v_{j-1},\theta^{\star})\Delta_j + O(\Delta_j^2)$ for $j = 1,\ldots,K$, as $K \to \infty$, where $G(s,\theta)$ is defined in~\eqref{eq::Gfunc}. A similar approximation for the nominator in $n\langle \widehat{B} - B^{\star},\widehat{B} - B^{\star}\rangle_{W_j}$ gives that 
\begin{equation}
n\langle \widehat{B} - B^{\star},\widehat{B} - B^{\star}\rangle_{W_j}  =
\{G(v_{j-1},\theta^{\star})\}^{-1} \E\,\lambda_{\theta^{\star}}(v_{j-1})Z^{\prime}\beta_j^{\star}  \{G(v_{j-1},\theta^{\star})\}^{-1} \Delta_j + O(\Delta_j^2),
\notag
\end{equation}
as $n \to \infty$ and $K \to \infty$. Then $\Sigma_{t,00}^{K} = n\langle \widehat{B} - B^{\star},\widehat{B} - B^{\star}\rangle_{t}$ is 
\begin{equation}
\begin{split}
\Sigma_{t,00}^{K} &
= \sum_{j : v_j \leq t}\{G(v_{j-1},\theta^{\star})\}^{-1} \E\,\lambda_{\theta^{\star}}(v_{j-1})Z^{\prime}\beta_j^{\star}  \{G(v_{j-1},\theta^{\star})\}^{-1} \Delta_j + O(1/K),
\end{split}
\notag
\end{equation}
which converges to $\Sigma_{t,00}$ as $n \to \infty$ and $K\to\infty$. As a consequence of Lemma~\ref{lemma:Psi11_approx} we have that
\begin{equation}
\dot{\Psi}_{\theta^{\star}}^{K,10}h(s,X_i,Z_i,\theta^{\star}) 
= - \dot{\Psi}^{11} \sum_{j=1}^K \,\E\,\{XZ^{\prime}\nu_{\theta^{\star}}(v_{j-1})\}G(v_{j-1},\theta^{\star})^{-1} Z_iI_{W_j}(s)
+ O(1/K).
\notag
\end{equation}  
Write $A_{i,j}(s,\theta) = \E\,\{XZ^{\prime}\nu_{\theta}(v_{j-1})\}G(v_{j-1},\theta^{\star})^{-1} Z_iI_{W_j}(s) - X_i\{1 - w_i(s,\theta)\}$, and $A_j(s,\theta) = \E\,\{XZ^{\prime}\nu_{\theta}(v_{j-1})\}G(v_{j-1},\theta^{\star})^{-1} ZI_{W_j}(s) - X\{1 - w(s,\theta)\}$. Then, also using Lemma~\ref{lemma:Psi11_approx},
\begin{equation}
\begin{split}
\Sigma_{11}^K & 
= \frac{1}{n} \sum_{i=1}^n \dot{\Psi}_{\theta^{\star}}^{K,11}\sum_{j=1}^K\int_{W_j} A_{i,j}(s,\theta^{\star})A_{i,j}(s,\theta^{\star}) Y_i(s)w_i(s,\theta^{\star})Z_i^{\prime}\beta^{\star}(s)\,\dd s\, (\dot{\Psi}_{\theta^{\star}}^{K,11})^{\prime}\\ 
& = \dot{\Psi}^{11}\sum_{j=1}^K\E\,\int_{W_j} A_j(s,\theta^{\star})A_j(s,\theta^{\star})^{\prime} \lambda_{\theta^{\star}}(s)Z^{\prime}\beta^{\star}(s)\,\dd s\, (\dot{\Psi}^{11})^{\prime} + O_p(1/K) + o_p(1),
\end{split}
\notag
\end{equation}
as $n,K \to \infty$. Since all the elements of the $A_j(s,X,Z,\theta^{\star})$ are bounded with probability one by Assumption~\ref{assumption3}, 
\begin{equation}
\begin{split}
& \sum_{j=1}^K\E\,\int_{W_j} A_j(s,\theta^{\star})A_j(s,\theta^{\star})^{\prime} \lambda_{\theta^{\star}}(s)Z^{\prime}\beta^{\star}(s)\,\dd s\\  
& \qquad \qquad = \sum_{j=1}^K\E\,A_j(v_{j-1},\theta^{\star})A_j(v_{j-1},\theta^{\star})^{\prime} \lambda_{\theta^{\star}}(v_{j-1})Z^{\prime}\beta_j^{\star}\Delta_j + O_p(1/K) ,
\end{split}
\notag
\end{equation}
whose limit is $\E\,\int_0^{\tau} A(s,\varphi^{\circ})A(s,\varphi^{\circ})^{\prime} y(s;X,Z)w(s,\varphi^{\circ})Z^{\prime}\beta^{\circ}(s)\,\dd s$ as $K \to \infty$. By similar arguments and using the same notation,  
\begin{equation}
\begin{split}
\Sigma_{t,01}^K & = \frac{1}{n}\sum_{i=1}^n\sum_{j : v_j \leq t}
G_j(\theta^{\star})^{-1} Z_i \int_{W_j} A_{i,j}(s,\theta^{\star})^{\prime}  
Y_i(s)w(s,\theta^{\star}) Z_i^{\prime}\beta_j^{\star}\,\dd s\,(\dot{\Psi}^{11})^{\prime}\Delta_j + O_p(1/K^2)\\
&
= \sum_{j : v_j \leq t}
G_j(\theta^{\star})^{-1}\,\E\, Z \int_{W_j} A_j(s,\theta^{\star})^{\prime} 
\lambda_{\theta^{\star}}(s) Z^{\prime}\beta_j^{\star}\,\dd s\,(\dot{\Psi}^{11})^{\prime}\Delta_j  + O_p(1/K^2) + o_p(1)\\
& 
= \sum_{j : v_j \leq t}
G(v_{j-1},\theta^{\star})^{-1}\,\E\, Z A_j(v_{j-1},\theta^{\star})^{\prime} 
\lambda_{\theta^{\star}}(v_{j-1}) Z^{\prime}\beta_j^{\star}\,(\dot{\Psi}^{11})^{\prime}\Delta_j  + O_p(1/K^2) + o_p(1),
\end{split}
\notag
\end{equation}
and the claim follows by letting $K \to \infty$.
\end{proof}
Recall from~\eqref{eq::Z_n_K} that $\Uscr_{n}^K = \sqrt{n}\{(\widehat{B} - B^{\star})^{\prime},(\widehat{\gamma} - \gamma^{\star})^{\prime}\}^{\prime}$. The first, triangular array, process convergence result takes place under the sequence $(P_{\star}^K)_{K \geq 1}$ of measures. 

\begin{theorem}\label{theorem::process_clt_1} As $n \to \infty$ and $K\to\infty$, 
\begin{equation}
\Uscr_{n}^K \Rightarrow \Uscr,
\notag
\end{equation}
under the sequence $(P_{\star}^{K})_{K}$, where $\Uscr = (\Uscr_1,\ldots,\Uscr_{q+p})$ is a Gaussian martingale with mean zero and variance process $\Sigma_{t}$, where $\Sigma_t$ is given in Lemma~\ref{lemma::sigma_matrix_explicit}. 
\end{theorem}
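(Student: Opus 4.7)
The plan is to apply Rebolledo's martingale central limit theorem to the triangular array of vector-valued $P_\star^K$-martingales approximating $\Uscr_n^K$. Combining~\eqref{eq::beta_star_nice} (for the first $q$ coordinates) with the block decomposition of $(\dot{\Psi}_{\theta^\star}^K)^{-1}$ entering Proposition~\ref{prop::simple_clt1} and the system~\eqref{eq::system2} (for the last $p$ coordinates), one may write
\[
  \Uscr_n^K(t) = \frac{1}{\sqrt{n}}\sum_{i=1}^n \Xi_i^K(t) + o_p(1)
\]
uniformly in $t \in [0,\tau]$, where each $\Xi_i^K(t)$ is a $(q+p)$-vector of stochastic integrals of bounded predictable integrands against the orthogonal square-integrable $P_\star^K$-martingale $M_i(\cdot,\theta^\star)$. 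The upper limit in the first $q$ coordinates is $t$ (rounded down to the partition) and $\tau$ in the last $p$ coordinates; the $n^{-1/2}$-scaled sum is thus itself a square-integrable vector martingale, whose jumps are of order $n^{-1/2}$.

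For the first Rebolledo hypothesis, the required convergence of the predictable quadratic variation $\langle \Uscr_n^K, \Uscr_n^K\rangle_t \to_p \Sigma_t$ is precisely Lemma~\ref{lemma::sigma_matrix_explicit}; joint convergence at any finite collection of times follows by repeating that proof block-by-block on the partition, or by monotonicity of the diagonal entries of the predictable quadratic variation. For the Lindeberg condition, Assumption~\ref{assumption3} gives bounded covariates, the logistic weights $w$ and $\pi$ lie in $(0,1)$, $r_j(s) \leq \Delta_j = \tau/K$, and Assumption~\ref{assumption1} bounds $\beta^\star$ uniformly. Lemma~\ref{lemma:Psi11_approx} together with the block inverse formulas in Appendix~\ref{app::Psi_dot} then show that the integrands defining $\Xi_i^K$ are bounded by a deterministic constant independent of $K$; in particular, at each $s$ only the $q$-subblock of $h_{1:K}$ corresponding to $W_j \ni s$ is nonzero, so despite $\dot{\Psi}_{\theta^\star}^{K,10}$ being $p \times qK$ the product contracts to an $O(1)$ quantity. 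Consequently, the jumps of $\Uscr_n^K$ are uniformly of order $n^{-1/2}$ and the Lindeberg condition holds trivially.

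Rebolledo's theorem (see e.g.~\citet[Theorem~II.5.1]{andersen1993statistical}) then delivers $\Uscr_n^K \Rightarrow \Uscr$ in $D[0,\tau]^{q+p}$, where $\Uscr$ is a zero-mean Gaussian process with independent increments---that is, a Gaussian martingale---whose variance process is the $\Sigma_t$ identified in Lemma~\ref{lemma::sigma_matrix_explicit}. The main obstacle I anticipate is precisely the uniform-in-$K$ control of the $p$-coordinate integrand $\dot{\Psi}_{\theta^\star}^{K,10} h_{1:K}(s, X_i, Z_i, \theta^\star)$: because both the matrix and the vector grow in one dimension with $K$, any naive term-by-term bound would diverge, and one must exploit the matching sparsity pattern of the two factors, which is itself a consequence of the block-matrix inversion combined with Lemma~\ref{lemma:Psi11_approx}. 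Once this bound and the Lindeberg condition are in hand, the passage from finite-dimensional convergence (via Rebolledo applied to a fixed time grid) to full process convergence is routine, since the predictable quadratic variation $\Sigma_t$ is continuous and the jumps are asymptotically negligible.
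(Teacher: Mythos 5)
Your proposal is correct and follows essentially the same route as the paper: represent $\Uscr_n^K$ as a sum of stochastic integrals of uniformly bounded integrands against the orthogonal $P_{\star}^K$-martingales $M_i(\cdot,\theta^{\star})$, obtain convergence of the predictable quadratic variation from Lemma~\ref{lemma::sigma_matrix_explicit}, note that the jumps are $O(n^{-1/2})$, and invoke a martingale central limit theorem. The only difference is that you cite Rebolledo's theorem with an explicit Lindeberg check, whereas the paper invokes its own Theorem~\ref{theorem::counting_clt} (proved via tightness and P-UT arguments from Jacod--Shiryaev, with the Lindeberg condition absorbed into the boundedness of the integrands); your observation about the sparsity of $h_{1:K}$ making $\dot{\Psi}_{\theta^{\star}}^{K,10}h_{1:K}$ an $O(1)$ quantity is exactly the point handled by Lemma~\ref{lemma:Psi11_approx} in the paper.
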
 
\begin{proof} From the expression for $\Uscr_n^K$ below~\eqref{eq::Z_n_K} we see that 
\begin{equation}
\Uscr_n^K(t) = n^{-1/2}\sum_{i=1}^n\int_0^t \Xi^n(s,X_i,Z_i)\,\dd M_i(s),
\notag
\end{equation} 
in terms of functions $\Xi^n(s,X_i,Z_i)$ with values in $\real^{q+p}$. From Assumptions~\ref{assumption1}--\ref{assumption3}, there is a constant $b<\infty$, such that $\norm{\Xi^n(s,X_i,Z_i)}\leq b$. From Lemma~\ref{lemma::sigma_matrix_explicit} we have $\langle \Uscr_n^K,\Uscr_n^K\rangle \to_p \Sigma_t$ as $n,K\to \infty$, where each component of $\Sigma_t$ is absolutely continuous with respect to Lebesgue measure. The claim then follows from Theorem~\ref{theorem::counting_clt} in Appendix~\ref{app::lemma::uniqueness}.
\end{proof}

\subsection{Shifting back to $P_{\circ}$}\label{sec::shifting_back}
In this section we use measure change techniques closely associated with those developed and studied in~\citet{mykland2009inference}, to obtain large-sample results for the estimators in~\eqref{eq::betahat1} when $n$ and $K$ tend to infinity, under the distribution $P_{\circ}$ associated with the survival function in~\eqref{eq::popsurv1}, that is, under the parameter values $\gamma_0^{\circ},\ldots,\gamma_{p-1}^{\circ}$ and $\beta_0^{\circ},\ldots,\beta_{q-1}^{\circ}$. The measure change techniques we use are akin to process convergence analogues of Le Cam{'}s third lemma (see e.g.~\citet[p.~90]{vanderVaart1998}, or~\citet[Def.~V.1.1, p.~285]{jacod2003limit}). The log-likelihood of the model presented in Section~\ref{sec::2.2} is (see e.g.~\citet[p.~98]{andersen1993statistical})
\begin{equation}
\ell_n(\varphi^{\circ}) = \sum_{i=1}^n \int_{0}^{\tau}[\log \{w(s,\varphi^{\circ}) Z_i^{\prime}\beta^{\circ}(s) \} \,\dd N_i(s) - Y_i(s)w(s,\varphi^{\circ}) Z_i^{\prime}\beta^{\circ}(s)\,\dd s ].
\notag
\end{equation} 
The log of the Radon--Nikodym derivative $\dd P_{\circ,n}/\dd P_{\star,n}^K$ is  
\begin{equation}
\begin{split}
\log \frac{\dd P_{\circ,n}}{\dd P_{\star,n}^K } 
& = \sum_{i=1}^n \int_{0}^{\tau}\big[\log \frac{w(s,\varphi^{\circ}) Z_i^{\prime}\beta^{\circ}(s)}{w(s,\theta^{\star}) Z_i^{\prime}\beta^{\star}(s)} \,\dd N_i(s)\\ 
& \qquad \qquad \qquad \qquad - Y_i(s)\{w(s,\varphi^{\circ})Z_i^{\prime}\beta^{\circ}(s) - w(s,\theta^{\star}) Z_i^{\prime}\beta^{\star}(s)\}\,\dd s \big].
\end{split}
\notag
\end{equation} 
Define 
\begin{equation}
\phi_K(s) = \sqrt{n}\{\beta^{\circ}(s) - \beta^{\star}(s)\}, 
\label{eq::phi_Kn}
\end{equation}
so that $\beta^{\circ}(s) = \beta^{\star}(s) + \phi_K(s)/\sqrt{n}$. Recall that $\beta^{\star}(s) = \sum_{j=1}^n \beta_j^{\star}I_{W_j}(s)$ with $\beta_j^{\star} = \beta^{\circ}(v_{j-1})$ for $j = 1,\ldots,K$. Under Assumption~\ref{assumption1}, the difference $\beta^{\circ}(s) - \beta^{\star}(s)$ tends to zero uniformly in $s$ as $K \to \infty$.

\begin{lemma}\label{lemma::lr1} The log-likelihood ratio $\log ( \dd P_{\circ,n}/\dd P_{\star,n}^K)$, evaluated in $t$, is  
\begin{equation}
\begin{split}
\log \frac{\dd P_{\circ,n}}{\dd P_{\star,n}^K }\big|_t 
& = \sum_{i=1}^n \big\{ \zeta_{i}^{K}(t) - \frac{1}{2}\langle \zeta_{i}^{K},\zeta_{i}^{K}\rangle_t \big\} + o_p(1),
\end{split}
\notag
\end{equation}
as $K \to \infty$, where $\zeta_{1}^{K},\ldots,\zeta_{n}^{K}$ are the $P_{\star}^K$ martingales given by 
\begin{equation}
\zeta_{i}^{K}(t) = \frac{1}{\sqrt{n}}\int_0^{t} a_{K,i}(s) \,\dd M_i(s,\theta^{\star}),
\label{eq::key_approx_MG} 
\end{equation}
with $a_{K,i}(s)  = \{Z_i^{\prime}\phi_K(s)\}/\{Z_i^{\prime}\beta^{\star}(s)\} - \{1 - w_i(s,\theta^{\star})\}\big(Z_i^{\prime}\int_0^s \phi_K(u)\,\dd u \big)$ for $i = 1,\ldots,n$.
\end{lemma}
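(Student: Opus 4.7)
The plan is to perform a second-order Taylor expansion of the formula for $\log(\dd P_{\circ,n}/\dd P_{\star,n}^K)|_t$ displayed just above the lemma in the small uniform quantities $e_i(s) \defeq Z_i^{\prime}(\beta^{\circ}(s)-\beta^{\star}(s)) = Z_i^{\prime}\phi_K(s)/\sqrt{n}$ and $E_i(s) \defeq Z_i^{\prime}(B^{\circ}(s) - B^{\star}(s)) = Z_i^{\prime}\int_0^s \phi_K(u)/\sqrt{n}\,\dd u$, both of which are $O(1/K)$ uniformly in $(s,i)$ by Assumption~\ref{assumption1} and the covariate boundedness in Assumption~\ref{assumption3}. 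Because $\gamma^{\circ} = \gamma^{\star}$, the weight $w_i(s,\varphi^{\circ})$ differs from $w_i(s,\theta^{\star})$ only through the $Z_i^{\prime}B$ part of its logistic argument, so the whole log-ratio collapses to a function of $e_i$ and $E_i$ alone.

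Using $\pi^{\prime}(v) = \pi(v)(1-\pi(v))$ and $\pi^{\prime\prime}(v) = \pi(v)(1-\pi(v))(1-2\pi(v))$, and writing $w = w_i(s,\theta^{\star})$ for short, one obtains
\begin{equation}
\log\frac{w_i(s,\varphi^{\circ})Z_i^{\prime}\beta^{\circ}(s)}{w\,Z_i^{\prime}\beta^{\star}(s)} = \frac{a_{K,i}(s)}{\sqrt{n}} - \tfrac{1}{2}\Bigl(\frac{e_i(s)}{Z_i^{\prime}\beta^{\star}(s)}\Bigr)^{2} - \tfrac{1}{2}w(1-w)E_i(s)^2 + O(K^{-3}),
\notag
\end{equation}
together with the companion expansion
\begin{equation}
w_i(s,\varphi^{\circ})Z_i^{\prime}\beta^{\circ}(s) - w\,Z_i^{\prime}\beta^{\star}(s) = \frac{wZ_i^{\prime}\beta^{\star}(s)\,a_{K,i}(s)}{\sqrt{n}} + w(1-w)\bigl\{\tfrac{1}{2} Z_i^{\prime}\beta^{\star}(s)(1-2w)E_i(s)^2 - e_i(s)E_i(s)\bigr\} + O(K^{-3}),
\notag
\end{equation}
where the identification $a_{K,i}(s)/\sqrt{n} = e_i(s)/Z_i^{\prime}\beta^{\star}(s) - (1-w)E_i(s)$ follows at once from the definition of $a_{K,i}$. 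Decomposing $\dd N_i(s) = \dd M_i(s,\theta^{\star}) + Y_i(s)wZ_i^{\prime}\beta^{\star}(s)\,\dd s$ inside the first integral, the first-order log piece integrated against $\dd M_i$ accumulates to $\zeta_i^K(t)$, while against $Y_iwZ_i^{\prime}\beta^{\star}\,\dd s$ it exactly cancels the leading piece of the compensator-difference expansion. Combining the remaining second-order $\dd s$-integrals from both expansions and invoking the identity $w(1-w) + (1-w)(1-2w) = (1-w)^2$ collapses them into the perfect square
\begin{equation}
-\tfrac{1}{2}\int_0^t Y_i(s)wZ_i^{\prime}\beta^{\star}(s)\Bigl(\frac{e_i(s)}{Z_i^{\prime}\beta^{\star}(s)} - (1-w)E_i(s)\Bigr)^{2}\dd s = -\tfrac{1}{2}\langle\zeta_i^K,\zeta_i^K\rangle_t.
\notag
\end{equation}

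The error terms are handled as follows. The second-order log pieces integrated against $\dd M_i(s,\theta^{\star})$ yield $P_{\star}^K$-martingales whose aggregate predictable quadratic variation is $O(n/K^4)$, hence $o_p(1)$ in the natural rate regime of Section~\ref{sec::cov_shrink} by Lenglart's inequality; the $O(K^{-3})$ Taylor remainders integrated against $Y_iwZ_i^{\prime}\beta^{\star}\,\dd s$ and summed over $i$ contribute $O_p(n/K^3) = o_p(1)$, while the same remainders integrated against $\dd M_i$ form a martingale whose QV is of order $n/K^6$. The main obstacle is the algebraic matching in the last display: the second-order contributions from the log expansion and from the compensator difference must conspire, through the Bernoulli-link identity $w(1-w)+(1-w)(1-2w)=(1-w)^2$, into the perfect square $(a_{K,i}(s)/\sqrt{n})^2 \cdot Y_iwZ_i^{\prime}\beta^{\star}(s)$; this mirrors the standard LAN information identity for survival log-likelihoods, and is the one place where careful sign bookkeeping is essential.
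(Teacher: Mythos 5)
Your proposal is correct and follows essentially the same route as the paper's own proof: a second-order Taylor expansion of the log-likelihood ratio in the $O(1/K)$ quantities $\phi_K/\sqrt{n}$ and its integral, the decomposition $\dd N_i = \dd M_i(\cdot,\theta^{\star}) + Y_i w_i Z_i^{\prime}\beta^{\star}\,\dd s$, cancellation of the first-order compensator terms, collection of the second-order $\dd s$-terms into $-\tfrac{1}{2}\langle\zeta_i^K,\zeta_i^K\rangle_t$, and Lenglart's inequality for the residual martingales. The only difference is organizational — you expand the product $w\,Z^{\prime}\beta$ jointly and make the perfect-square identity $w(1-w)+(1-w)(1-2w)=(1-w)^2$ explicit, whereas the paper splits the logarithm into the hazard-ratio and weight-ratio pieces and tracks them separately; your bookkeeping of the rate conditions on $n/K^3$ and $n/K^4$ is, if anything, slightly more explicit than the paper's.
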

\begin{proof} The log-likelihood ratio evaluated in $t$ is 
\begin{equation}
\begin{split}
\log \frac{\dd P_{\circ,n}}{\dd P_{\star,n}^K } \big|_{t}
& = \sum_{i=1}^n \int_{0}^{t}\bigg[\log \frac{w(s,\theta^{\circ}) Z_i^{\prime}\{\beta^{\star}(s) + \phi_K(s)/\sqrt{n}\}}{w(s,\varphi^{\star}) Z_i^{\prime}\beta^{\star}(s)} \,\dd N_i(s)\\ 
& \qquad\qquad- Y_i(s)[w(s,\varphi^{\circ})Z_i^{\prime}\phi_K(s)/\sqrt{n}
+ \{w(s,\varphi^{\circ}) - w(s,\theta^{\star}) \}Z_i^{\prime}\beta^{\star}(s)]\,\dd s \bigg]\\
& = \sum_{i=1}^n\bigg[ \int_{0}^{t}\big\{\log \frac{Z_i^{\prime}\{\beta^{\star}(s) + \phi_K(s)/\sqrt{n}\}}{Z_i^{\prime}\beta^{\star}(s)}\,\dd N_i(s) - Y_i(s)w_i(s,\varphi^{\circ})Z_i^{\prime} \phi_K(s)\,\dd s\big\}\\
&  \qquad\qquad + \int_{0}^{t}\big\{ \log \frac{w_i(s,\varphi^{\circ})}{w_i(s,\theta^{\star})}\,\dd N_i(s) 
- Y_i(s) \{w(s,\varphi^{\circ}) - w(s,\theta^{\star})\}Z_i^{\prime}\beta^{\star}(s)\,\dd s\bigg].
\end{split}
\notag
\end{equation} 
Note that $\norm{\phi_K(s)} 
= \norm{\beta^{\circ}(s) - \beta^{\circ}(v_{j-1})} \leq \sup_{s\in W_j}\norm{\dot{\beta}^{\circ}(s)}|s - v_{j-1}|$, for $s \in W_j$. For $s$ fixed, and as $K \to \infty$, 
\begin{equation}
\begin{split}
w(s,\varphi^{\circ}) - w(s,\theta^{\star})& 
= -w(s,\theta^{\star})\{1 - w(s,\theta^{\star})\}Z_i^{\prime}\int_0^s \phi_K(u)/\sqrt{n}\,\dd u \\
& \qquad 
+ \frac{1}{2}w(s,\theta^{\star})\{1 - w(s,\theta^{\star})\}\\
& \qquad \qquad \times\{1 - 2w(s,\theta^{\star})\}\big(Z_i^{\prime}\int_0^s \phi_K(u)/\sqrt{n}\,\dd u\big)^2 + O_p(1/K^4),
\end{split}
\notag
\end{equation}
and, writing $\log (a/b) = \log \{ 1 + (a - b)/b\}$, 
\begin{equation}
\begin{split}
\log \frac{w(s,\varphi^{\circ})}{w(s,\theta^{\star})} 
& = \frac{w(s,\varphi^{\circ}) - w(s,\theta^{\star})}{w(s,\theta^{\star})}
- \frac{1}{2}\big(\frac{w(s,\varphi^{\circ}) - w(s,\theta^{\star})}{w(s,\theta^{\star})}\big)^2 + O_p(1/K^4),
\end{split}
\notag
\end{equation}
while
\begin{equation}
\begin{split}
\log \frac{Z_i^{\prime}\{\beta^{\star}(s) + \phi_K(s)/\sqrt{n}\} }{Z_i^{\prime}\beta^{\star}(s)} & = 
\frac{Z_i^{\prime}\phi_K(s)}{Z_i^{\prime}\beta^{\star}(s)} - \frac{1}{2}\frac{\{Z_i^{\prime}\phi_K(s)/\sqrt{n}\}^2}{\{Z_i^{\prime}\beta^{\star}(s)\}^2} + O_p(1/K^3),
\end{split}
\notag
\end{equation}
where the probability is with respect to the distribution of the covariates. Consider only the $i${'}th term in the sum in $\log (\dd P_{\circ,n}/\dd P_{\star,n}^K)|_{t}$. Then the two first terms on the right are, using~\eqref{eq::phi_Kn}, 
\begin{equation}
\begin{split}
& \int_{0}^{t}\big\{\log \frac{Z_i^{\prime}\{\beta^{\star}(s) + \phi_K(s)/\sqrt{n}\}}{Z_i^{\prime}\beta^{\star}(s)}\,\dd N_i(s) - Y_i(s)w_i(s,\varphi^{\circ})Z_i^{\prime} \phi_K(s)/\sqrt{n}\,\dd s\big\}\\
& \qquad\qquad = \frac{1}{\sqrt{n}}\int_0^{t}\frac{Z_i^{\prime}\phi_K(s)}{Z_i^{\prime}\beta^{\star}(s)}\,\dd M_i(s,\theta^{\star})
- \frac{1}{2n}\int_0^{t}\big\{\frac{Z_i^{\prime}\phi_K(s)}{Z_i^{\prime}\beta^{\star}(s)}\big\}^2\,\dd\langle M_i(\cdot,\theta^{\star}),M_i(\cdot,\theta^{\star})\rangle_s\\ 
& \qquad \qquad \qquad \qquad \qquad \qquad - \frac{1}{2n}\int_0^{t}\big\{\frac{Z_i^{\prime}\phi_K(s)}{Z_i^{\prime}\beta^{\circ}(s)}\big\}^2\,\dd M_i(s,\theta^{\star}) + O_p(1/K^3).
\end{split}
\notag
\end{equation}
For the last two terms on the right hand side in $\log (\dd P_{\circ,n}/\dd P_{\star,n}^K)|_{t}$ (the $i${'}th term only),
\begin{equation}
\begin{split}
& \int_{0}^{t}\big\{ \log \frac{w_i(s,\varphi^{\circ})}{w_i(s,\theta^{\star})}\,\dd N_i(s) 
- Y_i(s) \{w_i(s,\varphi^{\circ}) - w_i(s,\theta^{\star}) \}Z_i^{\prime}\beta^{\star}(s)\,\dd s\\
& \quad = \int_0^{t} \frac{w_i(s,\varphi^{\circ}) - w_i(s,\theta^{\star})}{w_i(s,\theta^{\star})}\, \dd M_i(s,\theta^{\star}) 
 - \frac{1}{2}\int_0^{t} \big(\frac{w_i(s,\theta^{\star}) - w_i(s,\varphi^{\circ})}{w_i(s,\varphi^{\circ})}\big)^2\,\dd N_i(s) + O_p(1/K^4)\\
& \quad= -\frac{1}{\sqrt{n}}\int_0^{t}\{1 - w_i(s,\theta^{\star})\} \big( Z_i^{\prime}\int_0^s \phi_K(u)\,\dd u \big)\, \dd M_i(s,\theta^{\star})\\
& \qquad\qquad -\frac{1}{2n}\int_0^{t} \big[\{1 - w_i(s,\theta^{\star})\} \big( Z_i^{\prime}\int_0^s \phi_K(u)\,\dd u \big)\big]^2\\ 
& \qquad\qquad\qquad\qquad\times\big\{\dd M_i(s,\theta^{\star}) + Y_i(s)w_i(s,\theta^{\star}) Z_i^{\prime}\beta^{\star}(s)\,\dd s\big\} 
+ O_p(1/K^4).
\end{split}
\notag
\end{equation}
Collecting the terms from the last two displays, and using that both 
\begin{equation}
\frac{1}{n}\sum_{i=1}^n\int_0^{t}\frac{\{Z_i^{\prime}\phi_K(s)\}^2}{Z_i^{\prime}\beta^{\star}(s)}\,\dd M_i(s,\theta^{\star}),\;\text{and}\; \frac{1}{n}\sum_{i=1}^n\int_0^{t} [\{1 - w_i(s,\theta^{\star})\} \big( Z_i^{\prime}\int_0^s \phi_K(u)\,\dd u \big)]^2\,\dd M_i(s,\theta^{\star}), 
\notag
\end{equation}
are $O_p(1/\sqrt{n})$, uniformly in $t$ by Lenglart{'}s inequality \citep[Lemma~I.3.30, p.~35]{jacod2003limit}, we obtain the claim.
\end{proof}

If the $\Delta_1,\ldots,\Delta_K$ are proportional to $1/\sqrt{n}$, then there is a function $\phi(s)$ such that 
\begin{equation}
\int_0^t \phi_K(s)\,\dd s \to \int_0^t \phi(s)\, \dd s,
\label{eq::phi_convergence}
\end{equation}
for all $t$, as $n,K \to \infty$, where $\phi(s) \in \real^q$ is a function not identically equal to zero. This follows from Assumption~\ref{assumption1}, and is seen by looking at $\phi_K(s)$ component for component. Assume for simplicity that $\phi_K(s)$ is one-dimensional; that $\Delta_j = 1/\sqrt{n}$ for all $j = 1,\ldots,K$, i.e.~$\tau = 1$, and that $t = v_{\ell}$, then  
\begin{equation}
\begin{split}
\int_0^t \phi_K(s)\,\dd s & = \sqrt{n}\int_0^t \{\beta^{\circ}(s) - \beta^{\star}(s)\}\,\dd s
 = \sum_{j : v_{j}\leq t}\frac{1}{\Delta_j}\int_{v_{j-1}}^{v_{j}} \{\beta^{\circ}(s) - \beta^{\circ}(v_{j-1})\}\,\dd s  \\
& = \sum_{j : v_{j}\leq v_{\ell}}\frac{1}{\Delta_j}\int_{v_{j-1}}^{v_{j}}\{ \dot{\beta}^{\circ}(v_{j-1})(s - v_{j-1}) + \frac{1}{2}\ddot{\beta}^{\circ}(s_j)(s - v_{j-1})^2 \} \,\dd s\\
& = \frac{1}{2}\sum_{j : v_{j}\leq v_{\ell}}\dot{\beta}^{\circ}(v_{j-1})\Delta_j + O(\max_{j \leq K} \Delta_j),
\end{split}
\label{eq::cumulative_approx}
\end{equation}
where $s_j$ is some point between $s$ and $v_{j-1}$, hence dependent on the partition; and the right hand side is a Riemann sum tending to $\int_0^{v_{\ell}}\beta^{\circ}(s)/2\,\dd s$ as $K \to \infty$. In this simple case, this means that $\phi(s) = \beta^{\circ}(s)/2$. The same argument goes through for $\int_0^t \phi_K(s)^2\,\dd s$. 

The generic version of the $a_{K,i}(s)$ defined in~\eqref{eq::key_approx_MG} is the random variable 
\begin{equation}
a_K(s) = \{Z^{\prime}\phi_K(s)\}/\{Z^{\prime}\beta^{\star}(s)\} - \{1- w(s,\theta^{\star})\}\big(Z^{\prime}\int_0^s \phi_K(u)\,\dd u \big), 
\notag
\end{equation}
and $a(s)$ is the same thing but evaluated in $\varphi^{\circ}$ and $\phi(s)$ instead of $\theta^{\star}$ and $\phi_K(s)$. 

\begin{corollary} Assume that $K$ is of order $\sqrt{n}$. Then, 
\begin{equation}
\log \frac{\dd P_{\circ,n}}{\dd P_{\star,n}^K} \Rightarrow \log L,
\notag
\end{equation}
under $P_{\star,n}^{K}$ as $n \to \infty$ and $K \to \infty$, where $L$ is the process $L_t = \exp(-\int_0^t\sigma(s)^2\,\dd s/2 + \int_0^t\sigma(s) \,\dd W_s)$, with $W$ is a standard Wiener process, and  
\begin{equation}
\sigma(s) = \E\, \big[ \frac{Z^{\prime}\phi(s)}{Z^{\prime}\beta^{\circ}(s)} - \{1 - w(s,\varphi^{\circ})\}\big(Z^{\prime}\int_0^s \phi(u)\,\dd u \big)  \big]\{y(s;X,Z) w(s,\varphi^{\circ})Z^{\prime}\beta^{\circ}(s)\}^{1/2}.
\notag
\end{equation}
\end{corollary}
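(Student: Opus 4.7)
The plan is to recognise the expansion in Lemma~\ref{lemma::lr1} as the log of a stochastic exponential and to attack the martingale part with a functional CLT. Setting $V_n^K(t) = \sum_{i=1}^n\zeta_i^K(t) = n^{-1/2}\sum_{i=1}^n\int_0^t a_{K,i}(s)\,\dd M_i(s,\theta^{\star})$, independence of individuals makes the $M_i(\cdot,\theta^{\star})$ orthogonal under $P_{\star}^K$, so $V_n^K$ is a square-integrable $P_{\star}^K$-martingale with predictable quadratic variation
\begin{equation*}
\langle V_n^K,V_n^K\rangle_t = \frac{1}{n}\sum_{i=1}^n\int_0^t a_{K,i}(s)^2\, Y_i(s)w_i(s,\theta^{\star})Z_i^{\prime}\beta^{\star}(s)\,\dd s.
\end{equation*}
Lemma~\ref{lemma::lr1} rewrites the target as $\log(\dd P_{\circ,n}/\dd P_{\star,n}^K)|_t = V_n^K(t) - \tfrac{1}{2}\langle V_n^K,V_n^K\rangle_t + o_p(1)$, reducing the claim to showing $V_n^K \Rightarrow V$ under $(P_{\star,n}^K)$, where $V$ is a continuous Gaussian martingale with variance process $\int_0^t\sigma(s)^2\,\dd s$.

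The next step is to verify $\langle V_n^K,V_n^K\rangle_t \to_p \int_0^t\sigma(s)^2\,\dd s$, uniformly on $[0,\tau]$. The scaling $K \asymp \sqrt{n}$ together with Assumption~\ref{assumption1} makes $\phi_K(s) = \sqrt{n}\{\beta^{\circ}(s) - \beta^{\circ}(v_{j-1})\}$ uniformly bounded, and combined with Assumption~\ref{assumption3} this gives $\sup_{i,s,K}|a_{K,i}(s)| < \infty$. A Glivenko--Cantelli argument (in the style of Lemma~\ref{lemma::consistency1}) then replaces the empirical average over $i$ by the expectation over $(X,Z)$. Inside that expectation, the cumulative term $Z^{\prime}\int_0^s\phi_K(u)\,\dd u$ converges pointwise in $s$ to $Z^{\prime}\int_0^s\phi(u)\,\dd u$ via the Riemann-sum calculation in~\eqref{eq::cumulative_approx}, while the contribution of the pointwise term $Z^{\prime}\phi_K(s)/Z^{\prime}\beta^{\star}(s)$, although oscillating within each $W_j$, yields a well-defined limit once squared and integrated against a smooth weight, by the analogous approximation the author flags immediately after~\eqref{eq::cumulative_approx}. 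Identifying the resulting limit with $\int_0^t\sigma(s)^2\,\dd s$ completes this step.

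With the quadratic variation in hand, I would invoke the martingale central limit theorem cited in the proof of Theorem~\ref{theorem::process_clt_1} (Theorem~\ref{theorem::counting_clt}). The Lindeberg condition is automatic: the jumps of $V_n^K$ are at most $\sup_{i,s}|a_{K,i}(s)|/\sqrt{n} = O(n^{-1/2})$. Since the limit $V$ is a continuous Gaussian martingale, Dambis--Dubins--Schwarz gives the representation $V_t = \int_0^t\sigma(s)\,\dd W_s$ for some standard Brownian motion $W$. Joint process convergence $(V_n^K,\langle V_n^K,V_n^K\rangle) \Rightarrow (V,\langle V,V\rangle)$ follows, and the continuous mapping theorem applied to $(x,y)\mapsto x - \tfrac{1}{2}y$ on the Skorokhod space transfers this to $\log(\dd P_{\circ,n}/\dd P_{\star,n}^K) \Rightarrow V - \tfrac{1}{2}\langle V,V\rangle = \log L$.

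The main obstacle is the second step. Because $\phi_K$ does not converge pointwise and stabilises only after integration in $s$, establishing the convergence of $\langle V_n^K,V_n^K\rangle$ requires a careful split of $a_{K,i}(s)^2$ into the cumulative piece (handled by Riemann sums as in~\eqref{eq::cumulative_approx}) and the oscillatory piece (handled by a separate limiting calculation over each $W_j$). This is precisely where the assumption $K \asymp \sqrt{n}$ is essential: any other scaling makes the limiting variance process either identically zero or divergent, so the whole analysis of contiguity hinges on this balance.
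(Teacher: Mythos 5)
Your proposal is correct and follows essentially the same route as the paper: reduce via Lemma~\ref{lemma::lr1} to the martingale $\sum_i\zeta_i^K$, show its predictable quadratic variation converges to $\int_0^t\sigma(s)^2\,\dd s$ by a law of large numbers in $n$ followed by the Riemann-sum/dominated-convergence argument of~\eqref{eq::cumulative_approx} in $K$, and then apply Theorem~\ref{theorem::counting_clt} using the uniform bound on $a_{K,i}$. Your closing observation --- that $\phi_K$ oscillates within each window and that the squared (oscillatory) and cumulative pieces of $a_{K,i}^2$ must be handled by separate limiting calculations --- is exactly the detail the paper defers to the proof of Lemma~\ref{app::lemma_lr2}, so no gap remains.
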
 
\begin{proof} 
The martingale $\sum_{i=1}^n\zeta_i^{K}(t) = n^{-1/2}\sum_{i=1}^n\int_0^t a_{K,i}(s)\,\dd M_i(s,\theta^{\star})$ is the sum of the martingales $\zeta_1^{K},\ldots,\zeta_n^{K}$ defined in~\eqref{eq::key_approx_MG}. Since the counting processes are independent, the quadratic variation is $n^{-1}\sum_{i=1}^n\langle\zeta_i^{K},\zeta_i^{K} \rangle_t = n^{-1}\sum_{i=1}^n \int_0^t a_{K,i}(s)^2Y_i(s)w(s,\theta^{\star})Z_i^{\prime}\beta^{\star}(s)\,\dd s$ which converges in probability to $\E\, \int_0^t a_{K}(s)^2y(s;X,Z)w(s,\theta^{\star})Z^{\prime}\beta^{\star}(s)\,\dd s$ for all $t$ as $n \to \infty$, provided $\Delta_j \propto 1/\sqrt{n}$ for $j = 1,\ldots,K$; and by Assumptions~\ref{assumption1}--\ref{assumption2}, 
\begin{equation}
\E\, \int_0^t a_{K}^2(s)y(s;X,Z)w(s,\theta^{\star})Z^{\prime}\beta^{\star}(s)\,\dd s \to \int_0^t\sigma(s)^2\,\dd s,
\notag
\end{equation} 
for all $t$ as $K\to \infty$ by dominated convergence, using~\eqref{eq::cumulative_approx} (see the proof of Lemma~\ref{app::lemma_lr2} for more detail). By Assumptions~\ref{assumption1}--\ref{assumption3}, the functions $a_{K,i}(s)$ are all bounded above, so claim follows from Theorem~\ref{theorem::counting_clt} in Appendix~\ref{app::lemma::uniqueness}.
\end{proof}
Since the limiting process has $\E_{\varphi^{\circ}}\, L_t = 1$ for all $t$,
this corollary entails that $P_{\circ,n}$ is contiguous with respect to $P_{\star,n}^K$ (see e.g.~\citet[Lemma~6.4, p.~88]{vanderVaart1998}, \citet[Corollary~V.1.12, p.~289]{jacod2003limit}, or~\citet[Remark~2, p.~1411]{mykland2009inference}). 

Define the sequence of processes given by,
\begin{equation}
c_t^{K,n} = \big\langle \Uscr_n^K,\log \frac{\dd P_{\circ,n}}{\dd P_{\star,n}^K}  \big\rangle_t, \quad \text{for $0 \leq t \leq \tau$}.
\label{eq::c_t_Kn}
\end{equation}
In the next lemma we find the probability limit of $c_t^{K,n}$, and then change back to convergence of $\Uscr_n^K$ under the distribution $P_{\circ}$ associated with the continuous regression functions $\beta_0^{\circ}(s),\ldots,\beta_{q-1}^{\circ}(s)$. 

\begin{lemma}\label{eq::loglik_covar} Assume that $K$ is of the order $\sqrt{n}$, then $c_t^{K,n} \to_p c_t = (c_{t,1}^{\prime},c_{t,2}^{\prime})^{\prime}$ as $n$ and $K$ tend to infinity, where
\begin{equation}
\begin{split}
c_{t,1} & = \int_0^t G(s,\varphi^{\circ})^{-1}\,\E\, Z a(s) y(s;X,Z)w(s,\varphi^{\circ}) Z^{\prime}\beta^{\circ}(s)\,\dd s,\\
c_{t,2} & = (\dot{\Psi}^{11})^{\prime}\int_0^t\E\, A(s,\varphi^{\circ})^{\prime}a(s)y(s;X,Z)w(s,\varphi^{\circ}) Z^{\prime}\beta^{\circ}(s)\,\dd s,
\end{split}
\notag
\end{equation}
which are of dimensions $q\times 1$ and $p\times 1$, respectively.
\end{lemma}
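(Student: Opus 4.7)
The plan is to compute the predictable quadratic covariation in~\eqref{eq::c_t_Kn} by exploiting the representations of $\Uscr_n^K$ and of the log-likelihood ratio as stochastic integrals against the same family of orthogonal $P_{\star}^K$ martingales $M_1,\ldots,M_n$. From Lemma~\ref{lemma::lr1}, we have $\log(\dd P_{\circ,n}/\dd P_{\star,n}^K)|_t = \sum_{i=1}^n\zeta_i^K(t) - \tfrac12\sum_{i=1}^n\langle\zeta_i^K,\zeta_i^K\rangle_t + o_p(1)$; the compensator term has finite variation and contributes zero to the covariation, so only the martingale part $\sum_i\zeta_i^K$ enters $c_t^{K,n}$. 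Using the martingale representation from~\eqref{eq::MG_form_explicit} for $\Uscr_n^K$ and the independence of the $M_i$'s, the covariation collapses to a sum over $i$ of ordinary integrals against the compensator $\dd\langle M_i,M_i\rangle_s = Y_i(s)w_i(s,\theta^{\star})Z_i^{\prime}\beta^{\star}(s)\,\dd s$.

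For the $B$-block ($c_{t,1}$): writing $t=v_{j_0}$ without loss of generality, the covariation yields
\[
\frac{1}{n}\sum_{i=1}^n\sum_{j:v_j\leq t}\Delta_j\, G_j(\theta^{\star})^{-1}Z_i\int_{W_j}a_{K,i}(s)\,Y_i(s)w_i(s,\theta^{\star})Z_i^{\prime}\beta^{\star}(s)\,\dd s.
\]
A law of large numbers in $n$ replaces the average over $i$ by the covariate expectation, then I insert the approximation $G_j(\theta^{\star})=G(v_{j-1},\theta^{\star})\Delta_j+O(\Delta_j^2)$ from~\eqref{eq::Gjapprox}, and use the mesh continuity of the integrands (Assumptions~\ref{assumption1}--\ref{assumption3}) together with $a_{K}(s)\to a(s)$ and $\theta^{\star}\to\varphi^{\circ}$, to identify the resulting Riemann sum as converging to $\int_0^t G(s,\varphi^{\circ})^{-1}\E\,Z\,a(s)y(s;X,Z)w(s,\varphi^{\circ})Z^{\prime}\beta^{\circ}(s)\,\dd s$.

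For the $\gamma$-block ($c_{t,2}$): the $\gamma$ row of $(\dot\Psi^K_{\theta^{\star}})^{-1}\Psi^K_n(\theta^{\star})$ gives an integrand $\dot\Psi^{K,10}_{\theta^{\star}}h_{1:K}+\dot\Psi^{K,11}_{\theta^{\star}}h_{K+1}$; the calculation in the proof of Lemma~\ref{lemma::sigma_matrix_explicit}, together with Lemma~\ref{lemma:Psi11_approx}, shows this equals $-\dot\Psi^{11}A_{i,j}(s,\theta^{\star})+O(1/K)$ on $s\in W_j$, which in the limit $K\to\infty$ becomes (up to sign/transpose conventions absorbed in the definition of $A$) the object $\dot\Psi^{11}A(s,\varphi^{\circ})$. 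Taking the covariation with $\sum_i\zeta_i^K$, applying the LLN in $n$, and then the same Riemann-sum-to-integral passage used for $c_{t,1}$, produces $(\dot\Psi^{11})^{\prime}\int_0^t \E\,A(s,\varphi^{\circ})^{\prime}a(s)y(s;X,Z)w(s,\varphi^{\circ})Z^{\prime}\beta^{\circ}(s)\,\dd s$.

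The main obstacle is controlling the joint limit in $n$ and $K$. One must carefully verify that the $o_p(1)$ remainders from Lemma~\ref{lemma::lr1}, from~\eqref{eq::beta_star_nice}, and from the block-inverse approximation in Lemma~\ref{lemma:Psi11_approx}, together with the Riemann-sum error $O(1/K)$ and the LLN error in $n$, can all be aggregated without blowing up when $K\asymp\sqrt{n}$. This is handled by boundedness of the covariates and of $a_{K,i}(s)$, boundedness of $w_i$, $y(s;X,Z)$ and the derivatives of $\beta^{\circ}$ (Assumptions~\ref{assumption1}--\ref{assumption3}), followed by dominated convergence and the same Lenglart-type estimates used at the end of the proof of Lemma~\ref{lemma::lr1}; once this bookkeeping is in place, identification of the limit is the Riemann-sum argument described above.
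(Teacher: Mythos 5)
Your proposal is correct and follows essentially the same route as the paper: compute the predictable covariation of the martingale representations of $\Uscr_n^K$ and of the $\zeta_i^K$-part of the log-likelihood ratio (the compensator term dropping out), pass to the covariate expectation by the law of large numbers, insert the approximation $G_j(\theta^{\star})=G(v_{j-1},\theta^{\star})\Delta_j+O(\Delta_j^2)$, and identify the resulting Riemann sum as converging to $c_{t,1}$ (and analogously, via Lemma~\ref{lemma:Psi11_approx} and the $A_{i,j}$ representation, to $c_{t,2}$). You are somewhat more explicit than the paper about the $c_{t,2}$ block and about aggregating the remainder terms when $K\asymp\sqrt{n}$, which the paper dispatches with ``a similar argument.''
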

\begin{proof} With $a_{K,i}(s)$ defined as in~\eqref{eq::key_approx_MG}, and using the expression for $\Uscr_n^K$ given on page~\pageref{eq::MG_form_explicit}, then a straight forward calculation yields that for $c_{t}^{K,n} = \{(c_{t,1}^{K,n})^{\prime},(c_{t,2}^{K,n})^{\prime}\}^{\prime}$, 
\begin{equation}
\begin{split}
c_{t,1}^{K,n} & = \frac{1}{n}\sum_{i=1}^n\sum_{j:v_j \leq t}G_j(\theta^{\star})^{-1}\Delta_jZ_i\int_{W_j}a_{K,i}(s)Y_i(s) w_i(s,\theta^{\star})Z_i^{\prime}\beta^{\star}(s)\,\dd s \\
& = \E\,\sum_{j:v_j \leq t}G_j(\theta^{\star})^{-1}\Delta_j Z\int_{W_j}a_{K}(s)\lambda_{\theta^{\star}}(s)Z^{\prime}\beta^{\star}(s)\,\dd s  + o_p(1)\\
& = \,\E\,\sum_{j:v_j \leq t}G(v_{j-1},\theta^{\star})^{-1} Za_{K}(v_{j-1})
\lambda_{\theta^{\star}}(v_{j-1})Z^{\prime}\beta^{\star}(v_{j-1})\,\Delta_j + O(1/K^2) + o_p(1),
\end{split}
\notag
\end{equation} 
as $n,K \to \infty$. The right hand side is a Riemann sum, and by the same argument used in the proof of Lemma~\ref{app::lemma_lr2}, $c_{t,1}^{K,n}$ tends to $c_{t,1}$ as $K \to \infty$, where we use that the functions $a_K(s)$, $y(s;x,z)$, and $w(s,\theta)$ are all continuous in the parameters. A similar argument gives $c_{t,2}$.  
\end{proof}
 
\begin{theorem}\label{theorem::process_clt_II} Provided $\Delta_1,\ldots,\Delta_k$ are of order $1/\sqrt{n}$ or smaller, 
\begin{equation}
\Uscr_{n}^{K} \Rightarrow c + \Uscr,
\notag
\end{equation}
under $P_{\circ}$ as $n\to \infty$ and $K \to \infty$. The function $c_t$ is the probability limit of $c_{t}^{K,n}$ as defined in~\eqref{eq::c_t_Kn} under $P_{\star}^K$ as $n,K \to \infty$; and where $\Uscr = (\Uscr_1,\ldots,\Uscr_{q+p})$ is a mean zero Gaussian martingale with covariance function $\cov(\Uscr_t,\Uscr_s) = \Sigma_{t \wedge s}$, with $\Sigma_t$ given in Lemma~\ref{lemma::sigma_matrix_explicit}. In particular, the function $c_t \equiv 0$ if $\sqrt{n}\max_{j \leq K}\Delta_j \to 0$.
\end{theorem}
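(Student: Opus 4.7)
The plan is to conclude via a process-form of Le Cam's third lemma, stitched together from three ingredients already in hand: the weak convergence $\Uscr_n^K \Rightarrow \Uscr$ under $(P_\star^K)_K$ from Theorem~\ref{theorem::process_clt_1}; the likelihood-ratio expansion of Lemma~\ref{lemma::lr1} together with its corollary giving convergence of $\log(\dd P_{\circ,n}/\dd P_{\star,n}^K)$ to $\log L$ with $\E_{\varphi^{\circ}} L_t = 1$; and the covariation limit of Lemma~\ref{eq::loglik_covar}. The measure-change machinery of~\citet[Sec.~2]{mykland2009inference} will then produce the shift $c_t$ under $P_{\circ}$.

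First I would establish \emph{joint} Skorokhod convergence of the pair $(\Uscr_n^K,\ \log(\dd P_{\circ,n}/\dd P_{\star,n}^K))$ under $P_\star^K$ as $n,K\to\infty$. After the martingale approximations from Lemma~\ref{lemma::lr1} and the representation of $\Uscr_n^K$ displayed above~\eqref{eq::MG_form_explicit}, both coordinates are sums of stochastic integrals against the same counting-process martingales $M_i(\cdot,\theta^\star)$, with integrands that are bounded in $(n,K,s,\omega)$ by Assumptions~\ref{assumption1}--\ref{assumption3}. The blocks of the predictable joint bracket then converge in probability: to $\Sigma_t$ by Lemma~\ref{lemma::sigma_matrix_explicit}, to $\int_0^t \sigma(s)^2\,\dd s$ by the corollary, and to $c_t$ by Lemma~\ref{eq::loglik_covar}. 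Theorem~\ref{theorem::counting_clt} in Appendix~\ref{app::lemma::uniqueness} consequently yields joint weak convergence to $(\Uscr,\log L)$ with covariation $c_t$.

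Second, contiguity of $P_{\circ,n}$ with respect to $P_{\star,n}^K$ (observed after the corollary using $\E_{\varphi^{\circ}} L_t = 1$) together with the process version of Le Cam's third lemma (see \citet[Remark~2, p.~1411]{mykland2009inference} and \citet[Ch.~V]{jacod2003limit}) converts joint convergence under $P_{\star}^K$ into convergence of $\Uscr_n^K$ under $P_{\circ}$, the limit being shifted by the covariation $c_t$; that is, $\Uscr_n^K \Rightarrow c + \Uscr$ under $P_{\circ}$. For the final assertion, if $\sqrt{n}\max_{j\leq K}\Delta_j \to 0$, Assumption~\ref{assumption1} and a first-order Taylor expansion give, for $s\in W_j$,
\[
\norm{\phi_K(s)} = \sqrt{n}\norm{\beta^{\circ}(s) - \beta^{\circ}(v_{j-1})} \leq \sqrt{n}\max_{j\leq K}\Delta_j\cdot \sup_{s\in[0,\tau]}\norm{\dot{\beta}^{\circ}(s)} \to 0
\]
uniformly in $s$; hence $\int_0^s \phi_K(u)\,\dd u \to 0$ uniformly as well, so $a_K(s)\to 0$, the integrands in $c_{t,1}$ and $c_{t,2}$ from Lemma~\ref{eq::loglik_covar} vanish, and the shift $c_t$ is identically zero.

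The main obstacle is the process (rather than finite-dimensional) version of Le Cam's third lemma in the triangular-array regime where both $n$ and $K$ grow: one must verify tightness of $\Uscr_n^K$ under $P_{\circ}$ (which follows from tightness under $P_{\star}^K$ by contiguity) and argue that the joint predictable covariation $c_t^{K,n}$ between the estimator process and the log-likelihood ratio acts, after the measure change, as a deterministic drift on the limit. This is exactly the continuous-semimartingale measure-change argument of~\citet{mykland2009inference} that the paper has flagged as its methodological template.
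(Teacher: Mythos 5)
Your proposal is correct and follows essentially the same route as the paper: joint weak convergence of $\{\Uscr_n^K,\log(\dd P_{\circ,n}/\dd P_{\star,n}^K)\}$ under $P_\star^K$ via the shared driving martingales and Theorem~\ref{theorem::counting_clt} (with the bracket limits supplied by Lemma~\ref{lemma::sigma_matrix_explicit}, Lemma~\ref{lemma::lr1}, and Lemma~\ref{eq::loglik_covar}), followed by the general Le Cam third lemma to transfer to $P_\circ$. Your explicit verification that $c_t\equiv 0$ when $\sqrt{n}\max_{j\leq K}\Delta_j\to 0$, via $\phi_K\to 0$ uniformly, is a detail the paper states but does not spell out, and it is sound.
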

\begin{proof} Since the processes $\Uscr_n^K$ and $\log ( \dd P_{\circ,n}/\dd P_{\star,n}^K)$ have the same driving martingales, joint weak convergence of $\{\Uscr_n^K,\log ( \dd P_{\circ,n}/\dd P_{\star,n}^K)\}$ under $P_{\star}^K$ can be deduced from Theorem~\ref{theorem::process_clt_1}, Lemma~\ref{lemma::lr1}, Lemma~\ref{eq::loglik_covar}, using Theorem~\ref{theorem::counting_clt}. The claim, namely joint convergence under $P_{\circ}$, then follows from a general version of Le Cam{'}s third lemma (see \citet[Theorem 6.6, p.~90]{vanderVaart1998} or \citet[Lemma~V.1.13, p.~289]{jacod2003limit}).  
\end{proof}

\begin{remark} The condition $\sqrt{n}\max_{j \leq K}\Delta_j \to 0$ will be met again in Appendix~\ref{app::aalens_linear_pfs}, where we derive the limiting distribution of the Aalen estimator $\widetilde{B}$ of~\eqref{eq::standard.est}, both {`}directly{'} under the $P_{\circ}$ distribution, and by contiguity techniques {\`a} la those of the preceding theorem.
\end{remark}

\section{Simulations}\label{sec::simulations} 
Figure~\ref{fig::fig1} contains plots of the estimated parameters $\widehat{\gamma}_0$ and $\widehat{\gamma}_1$, as well as the estimated cumulative regression function $B_l(t) = \int_0^t \beta_l(s)\,\dd s$ for $l = 0,1,2$, with $\beta_0(t) = 1/2 + (1/2)t\{1 - (1/2)t\}$, $\beta_1(t) = t/(1/2 - 8t^2)$, and $\beta_2(t) = - \beta_1(t)/2$, for $0 \leq t \leq 2$. The data were simulated from a cure model of the form~\eqref{eq::popsurv1}, with $n = 5000$; $\gamma_0 = 0.54$, $\gamma_1 = 2.60$, and the $X_i$ independent standard normals; and the hazard of the $i${'}th individual being $\beta_0(t) + \beta_1(t)Z_{i,1} + \beta_2(t)Z_{i,2}$, with $Z_{i,1}$ and $Z_{i,2}$ independent mean zero normals with variance $1/4$. The censoring variables were independent draws from a uniform distribution on $(0,2)$. We ran $100$ simulations, and sampled new covariates for each simulation. The parameters were estimated using the algorithm in~\eqref{eq::the_alg1} of Appendix~\ref{app::algorithm1}.    

\begin{figure}
\centering
\includegraphics[scale=0.45,angle = 270]{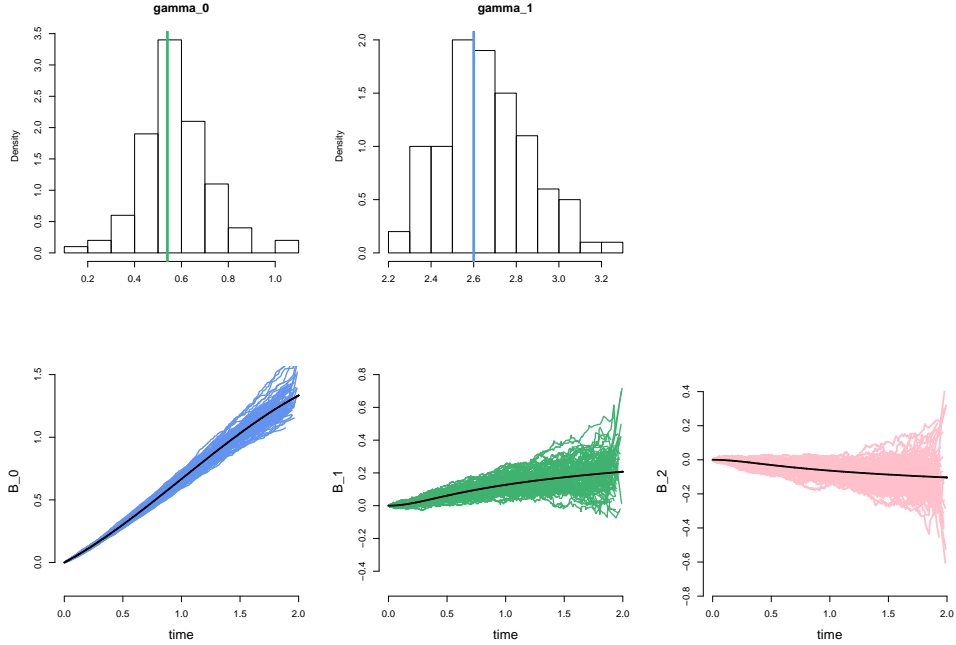} 
\caption{Upper panel: Estimates of $\gamma_0$ and $\gamma_1$, with the vertical lines indicating the true values. Lower panel: The nonparametric estimators $\widehat{B}_0(t), \widehat{B}_1(t)$, and $\widehat{B}_2(t)$, see eq.~\ref{eq::B_hat_nonpara}, with black lines indicating the true cumulative regression functions. The simulations on which these plots are based, are described in Section~\ref{sec::simulations}.} 
\label{fig::fig1}
\end{figure}


\section{Concluding remarks}
The techniques used in this technical report to arrive at process convergence results for a semiparametric survival model are, to my knowledge, new in survival analysis. As mentioned, similar techniques have been studied in the high-frequency literature by~\citet{mykland2009inference}, where stochastic volatility It{\^o}-process models are discretised, easing the derivation of estimators, as well as the study of their asymptotic properties. The semi-canonical strategy for studying the asymptotics of estimators in semiparametric models in survival analysis where the martingale theory is not immediately available, has become the methods initiated by~\citet{gill1989non} and by~\citet{murphy1995asymptotic}. See Appendix~\ref{subapp::spurver} for an example. I believe that the model studied in this technical report is easier handled with the methods employed here, rather than with the nonparametric likelihood theory used by~\citet{murphy1995asymptotic} for the gamma frailty model (and by the papers that tweak her proof to apply to other models). For future research, it would be interesting to investigate other problems in survival analysis that might be easier handled by employing parametric approximations and contiguity, rather than other available techniques. The same remark extends, of course, to semiparametric models beyond the survival analysis world. 

There are also Bayesian connections worth exploring here. \citet{hermansen2015bernshteuin} study parametric priors (as opposed to prior {\it processes}) for unknown functions $f$ that set $f$ constant over windows, as in this paper, and then let the number of windows increase with the sample size. Nonparametric cure models are complicated to handle in a Bayesian manner. The theory of this paper could possibly be merged with the theory of \citet{hermansen2015bernshteuin} to enable straighforward nonparametric Bayesian inference, and large sample results.   





\appendix 
\section{Deriving the estimation equations}\label{app::some_results} 
For any possibly right-censored event time $T = T^*\wedge C$ observed on $[0,\tau]$, with $C$ and censoring variable and $\delta = I\{T^*\geq C\}$, one can write $\delta f(T) = \int_0^{\tau} f(s)\,\dd N(s)$ and $f(T)  = \int_0^{\tau} Y(s)\,\dd f(s)$, for any $f$, and for any $f$ with $f(0) = 0$, respectively. Recall also the definition of the $r_{j}(t)$ functions $r_{j}(t) = \int_{W_j} I\{t \geq s\} \,\dd s = \int_0^{t}I_{W_j}(s)\,\dd s$, so that $r_{j}(T_i) = \int_{W_j} Y_i(s) \,\dd s$. The derivative of $r_{j}(T_i)$ is $I_{W_j}(T_i)$. With the locally constant hazard $\beta(t) = \sum_{j=1}^K \beta_j I_{W_j}(t)$ and $\pi(v) = \exp(v)/\{1+\exp(v)\}$, we also recall that 
\begin{equation}
w_i(t,\theta) = \pi\big( X_i^{\prime}\gamma - Z_i^{\prime}\int_0^t \sum_{j=1}^K \beta_jI_{W_j}(s)\,\dd s\big)
= \pi\big( X_i^{\prime}\gamma - Z_i^{\prime}\int_0^t \beta(s)\,\dd s\big), 
\notag
\end{equation}
hence, except at points of discontinuity, 
\begin{equation}
\frac{\dd}{\dd t}w_i(t,\theta) = - w_i(t,\theta)(1 - w_i(t,\theta)) Z_i^{\prime}\sum_{j=1}^K \beta_jI_{W_j}(t).
\notag
\end{equation}
These results were used to get from~\eqref{eq::system1} to~\eqref{eq::system2}, as follows
\begin{equation}
\begin{split}
&\delta_iI_{W_j}(T_i) - \int_{W_j} Y_i(s)\,\dd s(\delta_i + (1-\delta_i)w_i(T_i)) Z_i^{\prime}\beta_j \\
& \qquad\qquad= \int_0^{\tau}I_{W_j}\,\dd N_i(s) - r_j(T_i)\delta_i (1 - w_i(T_i))Z_i^{\prime}\beta_j - r_j(T_i)w_i(T_i)Z_i^{\prime}\beta_j\\
& \qquad\qquad= \int_0^{\tau}I_{W_j}\,\dd N_i(s) - \int_0^{\tau}r_j(s)(1 - w_i(s))Z_i^{\prime}\beta_j\,\dd N_i(s) - r_j(T_i)w_i(T_i)Z_i^{\prime}\beta_j.
\end{split}
\label{eq::derive.MGlik}
\end{equation}
For $r_j(T_i)w_i(T_i)Z_i^{\prime}\beta_j$ we use that $f(t) = \int_0^{\tau}Y_i(s)\,\dd f(s)$, then
\begin{equation}
\begin{split}
\dd\{r_j(t)w_i(t)\} & = \{I_{W_j}(t)w_i(t)\}\,\dd t - \{r_j(t)w_i(t)(1 - w_i(t)) Z_i^{\prime}\sum_{j=1}^K \beta_jI_{W_j}(t)\}\,\dd t\\
& = \{I_{W_j}(t)w_i(t)\}\,\dd t - \{r_j(t)w_i(t)(1 - w_i(t)) Z_i^{\prime}\beta(t)\}\,\dd t .
\end{split}
\notag
\end{equation}
Inserting this in~\eqref{eq::derive.MGlik} gives  
\begin{equation}
\begin{split}
\delta_iI_{W_j}(T_i) - & \int_{W_j} Y_i(s)\,\dd s(\delta_i + (1-\delta_i)w_i(T_i)) Z_i^{\prime}\beta_j\\
& =\int_0^{\tau}I_{W_j}\,\dd N_i(s) - \int_0^{\tau}r_j(s)(1 - w_i(s))Z_i^{\prime}\beta_j\,\dd N_i(s) - r_j(T_i)w_i(T_i)Z_i^{\prime}\beta_j\\
& = \int_0^{\tau}I_{W_j}\,\dd N_i(s) - \int_0^{\tau}r_j(s)(1 - w_i(s))Z_i^{\prime}\beta_j\,\dd N_i(s)\\ 
& \qquad - \int_0^{\tau} Y_i(s)I_{W_j}(s)w_i(s)Z_i^{\prime}\beta_j\,\dd s
+ \int_0^{\tau} Y_i(s) r_j(s)w_i(s)(1 - w_i(s)) Z_i^{\prime}\beta_j Z_i^{\prime}\beta(s)\,\dd s\\
& = \int_{0}^{\tau}I_{W_j}\{\dd N_i(s) - Y_i(s)w_i(s)Z_i^{\prime}\beta(s)\,\dd s \}\\
& \qquad\qquad\qquad - \int_0^{\tau}r_j(s)(1 - w_i(s))Z_i^{\prime}\beta_j \{\dd N_i(s) - Y_i(s)w_i(s)Z_i^{\prime}\beta(s)\,\dd s \}\\
& = \int_{0}^{\tau}\big\{I_{W_j} - r_j(s)(1 - w_i(s))Z_i^{\prime}\beta_j\big\}\,\dd M(s,\theta),
\end{split}
\notag
\end{equation}
because $\int_0^{\tau}I_{W_j}(t) Z_i^{\prime}\beta(t)\,\dd t =  \int_0^{\tau}I_{W_j}(t) Z_i^{\prime}\beta_j\,\dd t$.

\section{An algorithm}\label{app::algorithm1}
%
Let $\ell_n(\gamma,U)$ be the log-likelihood function that we would have used to estimate $\gamma$ had the Bernoulli vector $U = (U_1,\ldots,U_n)$ in~\eqref{eq::curehazard} been observable. It is given by
\begin{equation}
\ell_n(\gamma,u) = \sum_{i=1}^n \{ u_i \log \pi(X_i^{\prime}\gamma)
+ ( 1- u_i)\log ( 1- \pi(X_i^{\prime}\gamma)) \}.
\notag
\end{equation}
Let $D = (D_1,\ldots,D_n)$, with $D_i = (t_i,\delta_i,x_i,z_i)$ for $i = 1,\ldots,n$ be the observed data. Since $\ell_n(\gamma,u)$ is linear in $u$, we get that $\E_{\varphi}\, \{\ell_n(\gamma,u)\mid D\} = \ell_n\{\gamma,\E_{\varphi}\,(u\mid D)\}$, where 
\begin{equation}
\E_{\varphi}\,(u_i \mid D_i) = \delta_i + (1 - \delta_i)w_i(T_i,\varphi).
\notag
\end{equation} 
The algorithm we use to estimate $\gamma^{\circ}$ and $B^{\circ}$ is as follows: First, pick some initial values $\varphi^{(0)}= (\gamma^{(0)},\beta_1^{(0)},\ldots,\beta_K^{(0)})$. Second, for some small $\varepsilon > 0$, iterate  
\begin{equation}
\begin{split}
\gamma^{(r+1)} & = {\rm arg}\max_{\gamma} \E\,\{ \ell_n(\gamma,u)  \mid D,\varphi^{(r)}\},\\
\dd B^{(r+1)}(t) &= G_{n}(t,\varphi^{(r)})^{-1}n^{-1}\sum_{i=1}^n Z_i \dd N_i(t),\\ 
\end{split}
\label{eq::the_alg1}
\end{equation}    
for $r = 1,2,3,\ldots$, until $\norm{\varphi^{(r+1)} - \varphi^{(r)}} < \varepsilon$, with $G_{n}(t,\varphi)$ as defined in~\eqref{eq::B_hat_nonpara}. The {`}locally constant{'} version of this algorithm, that is, the one finding the solution to~\eqref{eq::the_est_eq}, replaces~\eqref{eq::the_alg1} with 
\begin{equation}
\begin{split}
\gamma^{(r+1)} & = {\rm arg}\max_{\gamma} \E\,\{ \ell_n(\gamma,u)  \mid D,\theta^{(r)}\},\\
\beta_j^{(r+1)} & = G_{n,j}(\theta^{(r)})^{-1}n^{-1}\sum_{i=1}^n Z_i \int_{W_j} \,\dd N_i(s),
\end{split}
\notag
\end{equation}    
until $\norm{\theta^{(r+1)} - \theta^{(r)}} \leq \varepsilon$, where $G_{n,j}(\theta)$ for $j = 1,\ldots,K$ are defined in~\eqref{eq::betahat2}.
To see that this algorithm is in effect a Newton--Rhapson algorithm, denote the complete data estimating equation by, (which is a $qK + p$ dimensional column vector)
\begin{equation}
\Upsilon_n(\theta,D,U) = 
(\Upsilon_{n,1}(\theta,D,U)^{\prime},\ldots,
\Upsilon_{n,K}(\theta,D,U)^{\prime},
\Upsilon_{n,K+1}(\theta,D,U)^{\prime}
)^{\prime}
= 0.
\notag
\end{equation}   
These are complete data estimating equations in the sense that they treat the $U_1,\ldots,U_n$ as observables. Using that $U_i \delta_i = \delta_i$, we can write
\begin{equation}
\begin{split}
& \Upsilon_{n,j}(\theta,D,U) = \frac{1}{n}\sum_{i=1}^n Z_i(\delta_i I_{W_j} - U_ir_j(T_i)Z_i^{\prime}\beta_j),\quad \text{for $j = 1,\ldots,K$},\\
& \Upsilon_{n,K+1}(\theta,D,U)  = \frac{1}{n}\sum_{i=1}^n X_i(U_i - \pi_i).
\end{split}
\notag
\end{equation}
Notice that $\Upsilon_{n,j}(\theta,D,U) = \Upsilon_{n,j}(\beta,D,U)$ for $j = 1,\ldots,K$ and that $\Upsilon_{n,K+1}(\theta,D,U) = \Upsilon_{n,K+1}(\gamma,D,U)$. Let 
\begin{equation}
\dot{\Upsilon}_{n}(\theta,D,U) = \frac{\partial}{\partial\theta}\Upsilon_{n}(\theta,D,U)
= 
\begin{pmatrix}
Q_{n}(\beta,D,U) & 0\\
0 & V_{n}(\gamma,U)
\end{pmatrix} ,
\notag
\end{equation}
and note that $Q_{n}(\beta,D,U)$ is a block diagonal matrix whose blocks are $\{Q_{n}(\beta,D,U)\}_{j,j} = -n^{-1}\sum_{i=1}^nZ_iZ_i^{\prime}U_i r_{j}(T_i)$ for $j =1,\ldots,K$, while $V_{n}(\gamma,u) = -n^{-1}\sum_{i=1}^nX_iX_i^{\prime}\pi_i( 1 - \pi_i)$, is the Hessian of $\ell_n(\gamma,U)$. The observed data estimating equations given in~\eqref{eq::the_est_eq} can be expressed in terms of the complete data estimating equation as
\begin{equation}
\Psi_{n}(\theta,D) = \Upsilon_n(\theta,D,\E\{U \mid D,\theta\}) = 0.
\notag
\end{equation}
Note also that $\Upsilon_n(\theta,D,\E\{U\mid D,\theta\}) = \E\,\{\Upsilon_n(\theta,D,U)\mid D,\theta \}$. It can then be verified (this fact follows more or less directly from the fact that the inverse of a block diagonal matrix is the matrix with the inverses of its diagonal) that the sequence $\theta^{(r)}$ produced by~\eqref{eq::the_alg1} is the same as the Newton--Raphson sequence 
\begin{equation}
\theta^{(r+1)} = \theta^{(r)} - \dot{\Upsilon}_{n}(\theta^{(r)},D,\E\,\{U\mid D,\theta^{(r)}\})^{-1} \Upsilon(\theta^{(r)},D,\E\,\{U\mid D,\theta^{(r)}\}),
\notag
\end{equation}
and convergence of the sequence $(\theta^{(r)})_{r \geq 1}$ ought to follow more or less directly from, e.g., \citet[Th.~8.1.7 and Th.~8.1.8, p.~145--146]{ortega1990numerical}. 


\section{A theorem and proof of Lemma~\ref{lemma::uniqueness}}\label{app::lemma::uniqueness}
Recall that $\Delta\xi(t) = \xi(t) - \xi(t-)$ is the jump of a process $\xi(t)$ at time $t$. The next theorem is used in many of the proofs. 

\begin{theorem}\label{theorem::counting_clt} For $n = 1,2,\ldots$ let $\Xi^n(s,x) = \Xi^n(s,x_1,\ldots,x_p)$ be functions with values in $\real^p,\,p \geq 1$, and assume that $\norm{\Xi^n(s,x)} \leq b < \infty$ for all $s$, $x$ and $n$. Let $X_1,\ldots,X_n$ be i.i.d.~replicates of $X$, where $X$ is a covariate satisfying Assumption~\ref{assumption3}. Suppose that $M_i(t) = N_i(t) - \int_0^{t}Y_i(s)\alpha(s,X_i)\,\dd s$, for $i = 1,\ldots,n$, are independent counting process martingales observed over $[0,\tau]$, and set 
\begin{equation}
\xi_n(t) = n^{-1/2}\sum_{i=1}^n \int_0^t \Xi^n(s,X_i)\,\dd M_i(s). 
\notag
\end{equation}
Suppose there is a function $H(s,x) = \Xi(s,x) \sqrt{y(s;x)\alpha(s,x)}$, such that
\begin{equation}
\langle \xi_n,\xi_n\rangle_t \overset{p}\to \E\,\langle \xi,\xi\rangle_t = \E\, \int_0^t H(s,X)H(s,X)^{\prime}\,\dd s,\quad\text{for all $0 < t \leq \tau$},
\label{eq::angle_bracket_convergence}
\end{equation}
as $n \to \infty$. Then $\xi_n \Rightarrow \xi$, where 
\begin{equation}
\xi(t) = \int_0^t H(s,X)\,\dd W(s), 
\notag
\end{equation}
with $W(s)$ a one-dimensional Wiener process.
\end{theorem}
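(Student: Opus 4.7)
The plan is to recognize $\xi_n$ as a locally square-integrable martingale (with respect to the natural filtration generated by the data $X_1,\ldots,X_n$ and the counting processes $N_1,\ldots,N_n$) and then invoke Rebolledo's martingale central limit theorem. Write
\begin{equation}
\xi_n(t) = n^{-1/2}\sum_{i=1}^n \int_0^t \Xi^n(s,X_i)\,\dd M_i(s),
\notag
\end{equation}
and note that because the $M_i$ are orthogonal counting process martingales, $\xi_n$ is a $p$-dimensional vector martingale whose predictable quadratic variation is exactly the expression appearing on the left of \eqref{eq::angle_bracket_convergence}. Thus convergence of the variation process to the deterministic, absolutely continuous limit $\Sigma_t = \E\int_0^t H(s,X)H(s,X)^{\prime}\,\dd s$ is handed to us as a hypothesis; no work is required there.

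The remaining ingredient for Rebolledo's theorem is a Lindeberg-type condition on the jumps: for every $\eps > 0$ and every $0 < t \leq \tau$,
\begin{equation}
\sum_{s\leq t}\norm{\Delta \xi_n(s)}^2\, I\{\norm{\Delta \xi_n(s)} > \eps\} \overset{p}\to 0.
\notag
\end{equation}
Here is where the uniform bound $\norm{\Xi^n(s,x)} \leq b$ does all the work. Since the $N_i$ are counting processes and the underlying survival and censoring distributions are absolutely continuous, almost surely no two $N_i$ jump simultaneously, so at any jump time of $\xi_n$ exactly one summand contributes and
\begin{equation}
\norm{\Delta \xi_n(s)} = n^{-1/2}\norm{\Xi^n(s,X_i)}\Delta N_i(s) \leq b/\sqrt{n}.
\notag
\end{equation}
For $n$ large enough, $b/\sqrt{n} < \eps$ and the indicator vanishes identically, so the Lindeberg condition is trivially satisfied.

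With both hypotheses of Rebolledo in hand (see for instance Andersen et al.~1993, Theorem II.5.1, or Jacod and Shiryaev 2003, Theorem VIII.3.11), $\xi_n$ converges in distribution in the Skorokhod space $D[0,\tau]^p$ to a continuous mean-zero Gaussian martingale with independent increments and (deterministic) variance process $\Sigma_t$, which is precisely the process denoted $\xi$ in the statement. The main obstacle in a proof of this kind is usually verifying the Lindeberg condition, but the uniform boundedness assumption on $\Xi^n$ renders it immediate; the only minor subtlety is appealing to absolute continuity of the censoring/survival law to rule out simultaneous jumps, which is already part of the standing assumptions on $H_c$ and the lifetime distributions in Section~\ref{sec::true_model_and_approx}.
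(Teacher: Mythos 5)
Your proof is correct, but it takes a genuinely different route from the paper's. You invoke Rebolledo's martingale central limit theorem directly: the hypothesized convergence of $\langle \xi_n,\xi_n\rangle_t$ to the deterministic, continuous limit $\E\int_0^t H(s,X)H(s,X)^{\prime}\,\dd s$ supplies the first condition, and the bound $\norm{\Delta\xi_n(s)}\leq b/\sqrt{n}$ (using that a.s.\ no two counting processes jump simultaneously) kills the Lindeberg condition outright. The paper instead avoids Rebolledo and runs a tightness--subsequence--identification argument entirely inside the Jacod--Shiryaev semimartingale framework: $C$-tightness of the bracket gives tightness of $\xi_n$; the $b/\sqrt{n}$ jump bound gives $C$-tightness of $\xi_n$ itself, shows every subsequential limit is a continuous local martingale, and establishes the P-UT property; P-UT then yields joint convergence of $(\xi_n,[\xi_n,\xi_n])$, which identifies the bracket of the limit; and the representation $\xi(t)=\int_0^t H(s,X)\,\dd W(s)$ comes from the martingale representation theorem for continuous local martingales with absolutely continuous bracket. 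Both arguments hinge on exactly the same two facts (the hypothesized bracket convergence and the uniform jump bound), so they are essentially equivalent in strength here; yours is shorter and is the standard proof in the counting-process literature, while the paper's is self-contained within the machinery it reuses elsewhere (P-UT, Le Cam's third lemma for processes) and makes explicit, via the remark following the theorem, why no Lindeberg-type condition needs to appear in the statement. One small point worth being careful about if you keep the Rebolledo route: you should note explicitly that the integrands $\Xi^n(s,X_i)$ are predictable (they are deterministic functions of $s$ and the time-zero measurable covariates) and bounded, so that $\xi_n$ is indeed a square-integrable martingale and the displayed expression is its predictable covariation.
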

\begin{proof} Write $\xi_n = (\xi_{n,1},\ldots,\xi_{n,p})^{\prime}$, where $\xi_{n,l}$ for $l = 1,\ldots,p$ are one-dimensional square integrable local martingales. Tightness of $\xi_n$ follows from~\citet[Theorem~VI.4.13, p.~358]{jacod2003limit} because the process $Q_{n}(t) = \sum_{l =1}^{p}\langle \xi_{n,l},\xi_{n,l}\rangle_t$ is $C$-tight~\citep[Def.~VI.3.25, p.~351]{jacod2003limit}. To see that $Q_n(t)$ is $C$-tight, note that both $Q_n(t)$ and its probability limit as $n\to \infty$, say $Q(t)$, are increasing processes, and $t\mapsto Q(t)$ is continuous, which by~\citet[Theorem~VI.3.37, p.~354]{jacod2003limit} yields process convergence of $Q_n$ to $Q$, and hence $C$-tightness of $Q_n$. From tightness of $\xi_n$ it follows that for any subsequence $n_k$, we can find a further subsequence $n_{k_l}$ such that $\xi_{n_{k_l}}$ converges in law to $\xi$, say \citep[Theorem~2.6, p.~20]{billingsley1999conv}. Since no two counting processes jump at the same time $\norm{\Delta\xi_n(t)} = \norm{n^{-1/2} \sum_{i=1}^n\Xi(t,X_i)\Delta N_i(t)} \leq n^{-1/2}\max_{i \leq n}\,\norm{\Xi(t,X_i)}\leq n^{-1/2} b \leq b$. The fact that $\norm{\Delta\xi_n(t)} \leq b$ entails that $\xi$ is a local martingale with respect to the filtration that it generates \citep[Corollary~IX.1.19, p.~527]{jacod2003limit}. The tightness of $\xi_n$ combined with $\sup_{0 \leq t \leq \tau}\norm{\Delta\xi_n(t)} \leq n^{-1/2}b $, gives that $\xi_n$ is $C$-tight, hence $\xi$ is continuous \citep[Proposition~VI.3.26, p.~351]{jacod2003limit}. Moreover, that the jumps of $\xi_n$ are uniformly bounded for all $n$, combined with $\langle \xi_n,\xi_n\rangle_t$ being tight for all $t$, gives that $\xi_n$ is P-UT \citep[Proposition~6.13, p.~379]{jacod2003limit}. From $\xi_n$ being P-UT we get joint convergence of $(\xi_{n_{k_l}},[\xi_{n_{k_l}},\xi_{n_{k_l}}])$ to $(\xi,[\xi,\xi])$ \citep[Theorem~VI.6.26, p.~384]{jacod2003limit}, but $[\xi,\xi] = \langle \xi,\xi\rangle $ since $\xi$ is continuous. From~\eqref{eq::angle_bracket_convergence} this means that $\langle \xi ,\xi\rangle_t = \int_0^t H(s,X)H(s,X)^{\prime}\,\dd s$. Since $\xi$ is a continuous local martingale and $\langle \xi,\xi\rangle_t$ is absolutely continuous with respect to Lebesgue measure, we can realise $\xi$ as $\xi(t) = \int_0^t H(s,X)\,\dd W(s)$ in terms on a one-dimensional Wiener process $W$ \citep[Proposition~II.7.12, p.~133]{jacod2003limit}. Since the subsequence $n_{k_l}$ was arbitrary, the claim of the theorem follows (see the corollary on p.~337 in \citet{billingsley1995conv}).
\end{proof}
\begin{remark} The boundedness assumption on $\Xi(s,x)$ and the fact that the $M_i(s)$ are counting process martingales ensure that $\sup_{0 \leq t \leq \tau}\norm{\xi_n(t)} \leq b < \infty$ for all $n$. This bound, in turn, implies that $\int_{\abs{x} > \eps} \abs{x}^2 \nu^n([0,\tau],\dd x) \to_p 0$ for all $\eps > 0$, where $\nu^n$ is the compensator of the jump measure of $\xi^n$ (see \citet[Corollary~B.4(ii)$^{\prime\prime}$]{stoltenberg2020kappa}). Therefore, Theorem~\ref{theorem::counting_clt} can be stated without an explicit Lindeberg type condition.  
\end{remark}

Here is the proof of Lemma~\ref{lemma::uniqueness}. As above, $r_j(s) = \int_{W_j} I\{s \geq u\}\,\dd u$. 
\begin{proof}{\sc(of Lemma~\ref{lemma::uniqueness})} Notice that $r_j(s) = 0$ for $s \leq v_{j-1}$; for $s \in W_j$ it is $r_j(s) = s - v_{j-1}$; and for $s \geq v_j$, $r_j(s) = \Delta_j$. Assume that $\theta$ is such that $\Psi_{\star}^K(\theta) = 0$, then for $1 \leq j \leq K-1$,
\begin{equation}
\begin{split}
0 & = \{\Psi_{\star}^K(\theta)\}_{j}  = \E\,Z\int_0^{\tau} g_j(s,X,Z,\theta) y(s;X,Z)\{w(s,\theta^{\star})Z^{\prime}\beta^{\star}(s) - w(s,\theta)Z^{\prime}\beta(s)\}\,\dd s\\
& = \sum_{\ell=j}^K \,\E\,Z\int_{W_{\ell}} [I_{W_j}(s) - \{1 - w(s,\theta)\}r_j(s)z^{\prime}\beta_j] y(s;X,Z)\{w(s,\theta^{\star})Z^{\prime}\beta_{\ell}^{\star} - w(s,\theta)Z^{\prime}\beta_{\ell}\}\,\dd s\\
& = \E\,Z\int_{W_j}y(s;X,Z)\{w(s,\theta^{\star})Z^{\prime}\beta_j^{\star} - w(s,\theta)Z^{\prime}\beta_j\}\,\dd s\\
& \qquad \qquad - \E\,Z\sum_{\ell = j+1}^K\int_{W_{\ell}} \{1 - w(s,\theta)\}\Delta_jZ^{\prime}\beta_j y(s;X,Z)\{w(s,\theta^{\star})Z^{\prime}\beta_{\ell}^{\star} - w(s,\theta)Z^{\prime}\beta_{\ell}\}\,\dd s.
\end{split}
\notag
\end{equation}
Consider only the intercept term of $Z = (1,Z_1,\ldots,Z_{q-1})^{\prime}$ and of $X = (1,X_1,\ldots,X_{p-1})^{\prime}$, and use that $\{\Psi_{\star}^K(\theta)\}_{K+1}=0$, then for $j = 1$,
\begin{equation}
\begin{split}
0 & = \{\Psi_{\star}^K(\theta)\}_{1,1}  = \E\,\int_{W_1}y(s;X,Z)\{w(s,\theta^{\star})Z^{\prime}\beta_1^{\star} - w(s,\theta)Z^{\prime}\beta_1\}\,\dd s\\
& \qquad \qquad - \Delta_1\,\E\,Z^{\prime}\beta_1 \sum_{\ell = 2}^K\int_{W_{\ell}} \{1 - w(s,\theta)\}y(s;X,Z)\{w(s,\theta^{\star})Z^{\prime}\beta_{\ell}^{\star} - w(s,\theta)Z^{\prime}\beta_{\ell}\}\,\dd s\\
& = \E\,\int_{W_1}y(s;X,Z)\{w(s,\theta^{\star})Z^{\prime}\beta_1^{\star} - w(s,\theta)Z^{\prime}\beta_1\}\,\dd s\\
& \qquad \qquad + \Delta_1\,\E\,Z^{\prime}\beta_1 \int_{W_{1}} \{1 - w(s,\theta)\}y(s;X,Z)\{w(s,\theta^{\star})Z^{\prime}\beta_{1}^{\star} - w(s,\theta)Z^{\prime}\beta_{1}\}\,\dd s,
\end{split}
\notag
\end{equation}
where we use Assumption~\ref{adhoc_assumption} to get the third equality. Because both $y(s;x,z)$ and $1 - w(s,\theta)$ are positive functions, and because the function $w(s,\theta^{\star})Z^{\prime}\beta_{1}^{\star} - w(s,\theta)Z^{\prime}\beta_{1}$ cannot change sign on $W_1$ when both hazards are constant over $W_1$, we must have that 
\begin{equation}
\E\,Z^{\prime}\{w(s,\theta^{\star})\beta_{1}^{\star} - w(s,\theta)\beta_{1} \}
= \E\,\pi(X^{\prime}\gamma^{\star} - Z^{\prime}\beta_1^{\star}s)Z^{\prime}\beta_{1}^{\star} - \pi(X^{\prime}\gamma - Z^{\prime}\beta_1s)Z^{\prime}\beta_{1}\} = 0,
\notag
\end{equation} 
for all $s \in W_1$. Since the functions on the right are not linear in $s$, this equality may only hold if 
\begin{equation}
X^{\prime}\gamma^{\star} - Z^{\prime}\beta_1^{\star}s = X^{\prime}\gamma - Z^{\prime}\beta_1s
=  X^{\prime}(\gamma^{\star} - \gamma) - Z^{\prime}(\beta_1^{\star} - \beta_1)s = 0,
\notag
\end{equation}
almost surely with respect to the distribution of the covariates. But because the covariates are linearly independent by Assumption~\ref{assumption3}, this implies that $\gamma^{\star} = \gamma$ and $\beta_1^{\star} = \beta_1$.
\end{proof}


\section{Some notes on the $\dot{\Psi}$ matrix}\label{app::Psi_dot} 
The vector valued function $\Psi_{\star}^K = \{(\Psi_{\star,1}^K)^{\prime},\cdots,(\Psi_{\star,K}^K)^{\prime},(\Psi_{\star,K+1}^K)^{\prime}\}^{\prime}$ is the $(qK + p)$ column vector defined in~\eqref{eq::Psi_avg1}, restated here
\begin{equation}
\Psi_{\star}^K(\theta) \coloneqq \E_{\theta^{\star}}\, \Psi_{n}^K(\theta) = \E\,\int_0^{\tau} h(s,X,Z,\theta) y(s;X,Z)\{w(s,\theta^{\star})Z^{\prime}\beta^{\star}(s) - w(s,\theta)Z^{\prime}\beta(s)\}\,\dd s.
\notag
\end{equation}
Define $f_{\theta}(t) = w(t,\theta)Z^{\prime}\beta(s)$, where the locally constant function $\beta(s) = \sum_{j=1}^K \beta_j I_{W_j}(s)$. We now derive an expression for $\dot{\Psi}_{\theta}^K$. The functions $g_j(s,x,z,\theta)$ are defined in~\eqref{eq::small_g_func}. For $j = 1,\ldots,K$,
\begin{equation}
\begin{split} 
\frac{\partial\Psi_{\star,j}^{K}(\theta)}{\partial\beta_j}& = - \E_{\theta^{\star}}ZZ^{\prime}\int_0^{\tau} y(s;X,Z)w(s,\theta)g_j(s,X,Z,\theta)[I_{W_j}(s) - \{1 - w(s,\theta)\}r_j(s)Z^{\prime}\beta(s)]\,\dd s\\
& \qquad \qquad - \E_{\theta^{\star}}ZZ^{\prime}\int_0^{\tau} y(s;X,Z)\{1 - w(s,\theta)\} r_j(s)\{f_{\theta^{\star}}(s) - f_{\theta}(s)\}\,\dd s\\ 
& \qquad \qquad - \E_{\theta^{\star}}ZZ^{\prime}\int_0^{\tau} y(s;X,Z)w(s,\theta)\{1 - w(s,\theta)\}Z^{\prime}\beta_j r_j(s)^2\{f_{\theta^{\star}}(s) - f_{\theta}(s)\}\,\dd s
\end{split}
\notag
\end{equation} 
The first set of cross derivatives are, for $j = 1,\ldots,K$,  
\begin{equation}
\begin{split}
\frac{\partial\Psi_{\star,j}^{K}(\theta)}{\partial\beta_{\ell}} & = 
-\E_{\theta^{\star}}ZZ^{\prime}\int_0^{\tau}y(s;X,Z)w(s,\theta)g_j(s,X,Z,\theta)[I_{W_{\ell}}(s) - \{1 - w(s,\theta)\}r_{\ell}(s)Z^{\prime}\beta(s)]\,\dd s\\
& \qquad \quad - \E_{\theta^{\star}}ZZ^{\prime}\int_0^{\tau}y(s;X,Z)w(s,\theta)\{1 - w(s,\theta)\}r_{\ell}(s)r_j(s)Z^{\prime}\beta_j \{f_{\theta^{\star}}(s) - f_{\theta}(s)\}\,\dd s.
\end{split}
\notag
\end{equation}
And, for $j = K+1$, (recall from~\eqref{eq::lambda_mu_nu_xi} that $\mu_{\theta}(s;x,y) = y(s;x,y)w(s,\theta)\{1 - w(s,\theta)\}$) 
\begin{equation}
\begin{split}
\frac{\partial\Psi_{\star,K+1}^{K}(\theta)}{\partial\beta_j} & =  -\E_{\theta^{\star}}\, ZX^{\prime} \int_0^{\tau}\mu_{\theta}(s;X,Y)[I_{W_j}(s) - \{1 - w(s,\theta)\}r_j(s)Z^{\prime}\beta(s) ]\,\dd s\\
& \qquad\qquad + \E_{\theta^{\star}}\,ZX^{\prime} \int_0^{\tau}y(s;X,Z) w(s,\theta)\{1 - w(s,\theta)\}r_j(s) \{f_{\theta^{\star}}(s) - f_{\theta}(s)\}\,\dd s.
\end{split}
\notag
\end{equation}
For $j = 1,\ldots,K$.
\begin{equation}
\begin{split}
\frac{\partial\Psi_{\star,j}^{K}(\theta)}{\partial\gamma} & =  
-\E_{\theta^{\star}}XZ^{\prime}\int_0^{\tau} y(s;X,Z)w(s,\theta)\{1 - w(s,\theta)\}g_j(s)Z^{\prime}\beta(s)\,\dd s \\
& \qquad \qquad +\E_{\theta^{\star}}XZ^{\prime}\int_0^{\tau}y(s;X,Z)w(s,\theta)\{1 - w(s,\theta)\}r_j(s)Z^{\prime}\beta_j\{f_{\theta^{\star}}(s) - f_{\theta}(s)\}\,\dd s,
\end{split}
\notag
\end{equation}
and, for $j = K+1$,
\begin{equation}
\begin{split}
\frac{\partial\Psi_{\star,K+1}^{K}(\theta)}{\partial\gamma} & =  -\E_{\theta^{\star}}\, XX^{\prime} \int_0^{\tau}y(s;X,Z)w(s,\theta)\{ 1 - w(s,\theta)\}^2 Z^{\prime}\beta(s)\,\dd s\\
& \qquad \qquad -\E_{\theta^{\star}}\, XX^{\prime}\int_0^{\tau} y(s;X,Z)w(s,\theta)\{1 - w(s,\theta)\}\{f_{\theta^{\star}}(s) - f_{\theta}(s)\}\,\dd s. 
\end{split}
\notag
\end{equation}
This gives the matrix
\begin{equation}
\begin{split}
\dot{\Psi}_{\theta}^K = 
\begin{pmatrix}
\dot{\Psi}_{\theta,00}^K & \dot{\Psi}_{\theta,01}^K\\
\dot{\Psi}_{\theta,10}^K & \dot{\Psi}_{\theta,11}^K,
\end{pmatrix},
\end{split}
\notag
\end{equation}
where, 
\begin{equation}
\begin{split}
\dot{\Psi}_{\theta,00}^K & = 
- 
\begin{pmatrix}
\frac{\partial}{\partial\beta_1}\Psi_{\star,1}^{K}(\theta) & \cdots & \frac{\partial}{\partial\beta_1}\Psi_{\star,K}^{K}(\theta)\\
\vdots	& \ddots & \vdots\\
\frac{\partial}{\partial\beta_K}\Psi_{\star,1}^{K}(\theta)& \cdots & \frac{\partial}{\partial\beta_K}\Psi_{\star,K}^{K}(\theta)
\end{pmatrix},
\qquad
\dot{\Psi}_{\theta,01}^K = -
\begin{pmatrix}
\frac{\partial}{\partial\beta_1}\Psi_{\star,K+1}^{K}(\theta)\\
\vdots \\
\frac{\partial}{\partial\beta_K}\Psi_{\star,K+1}^{K}(\theta)
\end{pmatrix},
\end{split}
\notag
\end{equation}
and
\begin{equation}
\dot{\Psi}_{\theta,10}^K  = 
- 
\begin{pmatrix}
\frac{\partial}{\partial\gamma}\Psi_{\star,1}^K(\theta)\\
\vdots \\
\frac{\partial}{\partial\gamma}\Psi_{\star,K}^K(\theta)
\end{pmatrix}^{\prime},\qquad 
\dot{\Psi}_{\theta,11}^K = -\frac{\partial}{\partial\gamma}\Psi_{\star,K+1}^{K}(\theta) .
\notag
\end{equation}

\begin{lemma}\label{lemma::dotpsi_limit} When $\dot{\Psi}_{\theta}^K$ is evaluated in $\theta^{\star}$, 
\begin{equation}
\begin{split}
\{\dot{\Psi}_{\theta,00}^K\}_{j,j} & = -
\frac{\partial}{\partial\beta_j} \Psi_{\star,j}^{K}(\theta^{\star})  = G_j(\theta^{\star}) + O(\Delta_j^2),\\
\{\dot{\Psi}_{\theta,00}^K\}_{j,\ell} & = -\frac{\partial}{\partial\beta_{\ell}} \Psi_{\star,j}^{K}(\theta^{\star})  = O(\Delta_j^2\vee\Delta_{\ell}^2),\\
\{\dot{\Psi}_{\theta,01}^K\}_{j} & = -\frac{\partial}{\partial\beta_{j}} \Psi_{\star,K+1}^{K}(\theta^{\star})  = \E\,ZX^{\prime}\int_{W_j}y(s;X,Z)w(s,\theta^{\star})\{1 - w(s,\theta^{\star})\}\,\dd s + O(\Delta_j^2),\\
\{\dot{\Psi}_{\theta,10}^K\}_{j} & = -\frac{\partial}{\partial\gamma} \Psi_{\star,j}^{K}(\theta^{\star})  
= \E\,XZ^{\prime}\int_{W_j}y(s;X,Z)w(s,\theta^{\star})\{1 - w(s,\theta^{\star})\} Z^{\prime}\beta^{\star}_j\,\dd s + O(\Delta_j^2),\\
\dot{\Psi}_{\theta,11}^K &= - \frac{\partial}{\partial\gamma} \Psi_{K+1}^{K}(\theta^{\star})  = \E\,XX^{\prime}\int_0^{\tau} y(s;X,Z)w(s,\theta^{\star})\{1 - w(s,\theta^{\star})\}^2 Z^{\prime}\beta^{\star}(s)\,\dd s,
\end{split}
\notag
\end{equation}
as $K \to \infty$.
\end{lemma}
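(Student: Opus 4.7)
The plan is to substitute $\theta = \theta^{\star}$ directly into each of the five partial-derivative expressions derived in the material preceding the lemma, and then track the order in $\Delta_j$ of every resulting piece. The decisive observation is that the difference $f_{\theta^{\star}}(s) - f_{\theta}(s) = w(s,\theta^{\star})Z^{\prime}\beta^{\star}(s) - w(s,\theta)Z^{\prime}\beta(s)$ vanishes identically at $\theta = \theta^{\star}$, so every line in those formulas containing this factor drops out. What survives is in each case a single integral that is easy to dissect.

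Second, I would partition the integration domain $[0,\tau]$ according to the support structure of $I_{W_j}$ and $r_j$: on $[0,v_{j-1})$ both vanish; on $W_j = [v_{j-1},v_j)$ we have $I_{W_j}(s) = 1$ and $r_j(s) = s - v_{j-1} \in [0,\Delta_j]$; and on $[v_j,\tau]$ the indicator is zero while $r_j(s) = \Delta_j$. For the diagonal block $\{\dot{\Psi}_{\theta,00}^K\}_{j,j}$, the product of the two bracketed factors in the surviving integrand produces the term $I_{W_j}^2$ on $W_j$, which integrates to the leading expression $\E\,ZZ^{\prime}\int_{W_j}y(s;X,Z)w(s,\theta^{\star})\,\dd s = G_j(\theta^{\star})$; the second summand of $G_j$ in~\eqref{eq::Gj_approx} vanishes at $\theta^{\star}$ and hence leaves only this leading piece. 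Every other contribution on $W_j$ picks up an extra factor $r_j(s) \leq \Delta_j$ over an interval of length $\Delta_j$, and on $[v_j,\tau]$ both bracketed factors are themselves $O(\Delta_j)$, which together produces the claimed $O(\Delta_j^2)$ remainder.

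The off-diagonal block $\{\dot{\Psi}_{\theta,00}^K\}_{j,\ell}$ with $j\neq \ell$ has no leading-order piece because $I_{W_j}I_{W_{\ell}}\equiv 0$; every surviving summand carries a factor $r_j$ or $r_{\ell}$ in each of the two bracketed factors, and a short case analysis on the relative position of $W_j$ and $W_{\ell}$ delivers the bound $O(\Delta_j^2\vee\Delta_{\ell}^2)$. The two mixed blocks $\{\dot{\Psi}_{\theta,01}^K\}_j$ and $\{\dot{\Psi}_{\theta,10}^K\}_j$ are handled by the same recipe: the $I_{W_j}$-piece contributes the asserted leading integral over $W_j$, and the $r_j$-piece becomes the remainder. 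The corner block $\dot{\Psi}_{\theta,11}^K$ contains no $r_j$ or $I_{W_j}$ at all, so once the $f_{\theta^{\star}} - f_{\theta}$ line is killed it already has the asserted form exactly, with no small-$\Delta$ expansion needed.

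I expect the main obstacle to be bookkeeping rather than deep analysis: there are many small pieces to collect, and one has to be careful on $[v_j,\tau]$, where $r_j$ does not shrink with the mesh but is always multiplied by another small factor before integration. Assumptions~\ref{assumption1} and~\ref{assumption3} supply boundedness of $Z^{\prime}\beta^{\star}(s)$, $w(s,\theta^{\star})$, $X$, and $Z$, which is exactly what is needed to ensure that every hidden constant is finite and uniform in $j,\ell$.
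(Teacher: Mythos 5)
Your strategy is the same as the paper's: evaluate at $\theta^{\star}$ so that every term carrying the factor $f_{\theta^{\star}}(s)-f_{\theta}(s)$ drops out, identify the surviving $I_{W_j}$-contribution with the stated leading term, and control the rest through $r_j(s)\leq\Delta_j$ together with the lengths of the integration ranges. For $\{\dot{\Psi}_{\theta,00}^K\}_{j,j}$, $\{\dot{\Psi}_{\theta,00}^K\}_{j,\ell}$ and $\dot{\Psi}_{\theta,11}^K$ your accounting is correct and matches the paper's, whose auxiliary estimate $|\int_{W_j}\alpha(s)r_j(s)\,\dd s|=O(\Delta_j^2)$ is exactly your \emph{extra factor $r_j\leq\Delta_j$ over an interval of length $\Delta_j$}; your observation that on $[v_j,\tau]$ both brackets are individually $O(\Delta_j)$ also correctly handles the $r_j(s)r_\ell(s)$ pieces.

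The one place where the same recipe does not deliver what the lemma asserts is the pair of mixed blocks. There the surviving integrand is a \emph{single} bracket $[I_{W_j}(s)-\{1-w(s,\theta^{\star})\}r_j(s)Z^{\prime}\beta^{\star}(s)]$ multiplied by a bounded function ($\mu_{\theta^{\star}}$, respectively a $\nu$-type factor) that carries no second occurrence of $I_{W_j}$ or $r_j$. The $r_j$-piece is therefore $\int_0^{\tau}(\cdots)\,r_j(s)\,\dd s$, and since $r_j(s)=\Delta_j$ on all of $[v_j,\tau]$ --- an interval whose length does not shrink --- this piece is only $O(\Delta_j)$, not the claimed $O(\Delta_j^2)$. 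In the $00$-block the second bracket supplied the missing factor of $\Delta_j$; here nothing does, so your own warning that $r_j$ ``is always multiplied by another small factor before integration'' is precisely the point that fails for these two blocks. To be fair, the paper's proof dispatches them with ``by the same arguments'' and so shares the elision; but as written neither your argument nor the paper's justifies the $O(\Delta_j^2)$ for $\{\dot{\Psi}_{\theta,01}^K\}_{j}$ and $\{\dot{\Psi}_{\theta,10}^K\}_{j}$, and the order matters downstream: in Lemma~\ref{lemma:Psi11_approx} these blocks are multiplied by $(\dot{\Psi}_{\theta^{\star},00}^{K})^{-1}$, whose diagonal blocks are $O(1/\Delta_j)$, and summed over $K$ terms, so an $O(\Delta_j)$ remainder would not be negligible. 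You should either sharpen the estimate (e.g.\ by showing that the $[v_j,\tau]$ contribution cancels or is absorbed elsewhere) or acknowledge that only $O(\Delta_j)$ is obtained by this route.
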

\begin{proof} Recall that $g_j(s,X,Z,\theta) = I_{W_j}(s) - \{1 - w(s,\theta)\}r_j(s)Z^{\prime}\beta_j$ for $j =1,\ldots,K$. That $r_j(s) \leq \Delta_j$ for any $0 < s \leq \tau$ is used repeatedly. We also use that if $\alpha(s)$ is a real function with a bounded derivative $\dot{\alpha}(s)$ on $[0,\tau]$, then
\begin{equation}
\begin{split}
%
|\int_{W_j} \alpha(s) r_j(s)\,\dd s| & 
\leq \Delta_j\int_{W_j} |\alpha(s)| \,\dd s
\leq \Delta_j^2 \alpha(v_{j-1}) + \Delta_j \int_{W_j}|\alpha(s) - \alpha(v_{j-1})|\,\dd s\\
&\leq \Delta_j^2 \alpha(v_{j-1}) + \Delta_j\sup_{s \in W_j}|\dot{\alpha}(s)|\int_{W_j}|s - v_{j-1}|\,\dd s\\
& = \Delta_j^2 \alpha(v_{j-1}) + \Delta_j\sup_{s \in W_j}|\dot{\alpha}(s)|\,\frac{\Delta_j^2}{2}
= O(\Delta_j^2).
\end{split}
\notag
\end{equation} 
Evaluated in $\theta^{\star}$, 
\begin{equation}
\begin{split}
-\frac{\partial}{\partial \beta_j} \Psi_{\star,j}(\theta^{\star}) & 
= \E\, ZZ^{\prime}\int_0^{\tau} y(s;X,Z) w(s,\theta^{\star}) g_j(s,X,Z,\theta^{\star}) \\
& \qquad \qquad \qquad \qquad \times[I_{W_j}(s) - \{1 - w(s,\theta^{\star})\}r_j(s) Z^{\prime}\beta^{\star}(s)]\,\dd s\\
& = 
G_j(\theta^{\star}) - 2\, \E\, ZZ^{\prime}\int_{W_j}y(s;X,Z) w(s,\theta^{\star})\{1 - w(s,\theta^{\star})\}r_j(s)Z^{\prime}\beta_j^{\star}\,\dd s \\
& \qquad \qquad   
+ \E\, ZZ^{\prime}\int_0^{\tau} y(s;X,Z) w(s,\theta^{\star})\{1 - w(s,\theta^{\star})\}^2r_j(s)^2z^{\prime}\beta_j^{\star}Z^{\prime}\beta^{\star}(s)\,\dd s\\
& = G_j(\theta^{\star}) + O(\Delta_j^2),
\end{split}
\notag
\end{equation}
and,
\begin{equation}
\begin{split}
- \frac{\partial}{\partial\beta_{\ell}}\Psi_{\star,j}^{K}(\theta^{\star}) & = 
\E\,ZZ^{\prime}\int_0^{\tau}y(s;X,Z)w(s,\theta^{\star})g_j(s,X,Z,\theta^{\star})\\
& \qquad \qquad \qquad \qquad \times[I_{W_{\ell}}(s) - \{1 - w(s,\theta^{\star})\}r_{\ell}(s)Z^{\prime}\beta^{\star}(s)]\,\dd s\\
& = -\E\,ZZ^{\prime}\int_{W_j}y(s;X,Z)w(s,\theta^{\star})\{1 - w(s,\theta^{\star})\}r_{\ell}(s)Z^{\prime}\beta^{\star}(s)]\,\dd s\\
& \qquad \qquad \qquad \qquad -\E\,ZZ^{\prime}\int_{W_{\ell}}y(s;X,Z)w(s,\theta^{\star})\{1 - w(s,\theta^{\star})\}r_{j}(s)Z^{\prime}\beta_j^{\star}]\,\dd s\\
& + \E\,ZZ^{\prime}\int_{0}^{\tau}y(s;X,Z)w(s,\theta^{\star})\{1 - w(s,\theta^{\star})\}^2r_j(s)r_{\ell}(s)Z^{\prime}\beta^{\star}(s)Z^{\prime}\beta_j^{\star}]\,\dd s\\
& = O(\Delta_j^2\vee \Delta_{\ell}^2).
\end{split}
\notag
\end{equation}
By the same arguments we get the approximations for $\partial \Psi_{\star,K+1}^K(\theta^{\star})/\partial \beta_j$ and $\partial \Psi_{\star,j}^K(\theta^{\star})/\partial \gamma$, while the lower right corner $\partial \Psi_{\star,K+1}^K(\theta^{\star})/\partial \gamma$ remains unchanged.
\end{proof}
The inverse of $\dot{\Psi}_{\theta}^K$ is 
\begin{equation}
\begin{split}
(\dot{\Psi}_{\theta}^K)^{-1} 
& = 
\begin{pmatrix}
(\dot{\Psi}_{\theta,00}^{K})^{-1} + (\dot{\Psi}_{\theta,00}^{K})^{-1} \dot{\Psi}_{\theta,01}^{K} \dot{\Psi}_{\theta}^{K,11}\dot{\Psi}_{\theta,10}^{K}(\dot{\Psi}_{\theta,00}^{K})^{-1} 
& -(\dot{\Psi}_{\theta,00}^{K})^{-1} \dot{\Psi}_{\theta,01}^{K} \dot{\Psi}_{\theta}^{K,11}\\
- \dot{\Psi}_{\theta}^{K,11}\dot{\Psi}_{\theta,10}^{K}(\dot{\Psi}_{\theta,00}^{K})^{-1} & \dot{\Psi}_{\theta}^{K,11}
\end{pmatrix},
\end{split}
\notag
\end{equation}
where 
\begin{equation}
\dot{\Psi}_{\theta}^{K,11} = (\dot{\Psi}_{\theta,11}^{K} - \dot{\Psi}_{\theta,10}^{K}(\dot{\Psi}_{\theta,00}^{K})^{-1}	\dot{\Psi}_{\theta,01}^{K}	)^{-1}.
\notag
\end{equation}
From Lemma~\ref{lemma::dotpsi_limit} we have that $\Psi_{\theta^{\star},00}^K$ is approximately a block diagonal matrix, that is 
\begin{equation}
\Psi_{\theta^{\star},00}^K 
%
= \E\,
\begin{pmatrix}
ZZ^{\prime}\int_{W_1}\lambda_{\theta^{\star}}(s)\,\dd s & \cdots & 0\\
\vdots & \ddots & \vdots\\
0 & \cdots & ZZ^{\prime}\int_{W_K}\lambda_{\theta^{\star}}(s)\,\dd s
\end{pmatrix}
+ O(\max_{j \leq K}\Delta_j^2),
\notag
\end{equation}
where $G_j(\theta^{\star}) = \E\, ZZ^{\prime} \int_{W_j}\lambda_{s}(\theta^{\star})\,\dd s$ for $j = 1,\ldots,K$, which results in the nice form of the following matrix,  
\begin{equation}
\begin{split}
& (\Psi_{\theta^{\star}}^{K,11})^{-1}  = 
\int_{0}^{\tau}\E\,XX^{\prime}\xi_{\theta^{\star}}(s)\,\dd s \\
&\; - \sum_{j=1}^K \E\,\{XZ^{\prime}\int_{W_j}\nu_{\theta^{\star}}(s)\,\dd s\}
(\E\,\{ZZ^{\prime} \int_{W_j}\lambda_{\theta^{\star}}(s)\,\dd s\})^{-1}\E\,\{ZX^{\prime}\int_{W_j}\mu_{\theta^{\star}}(s)\,\dd s\} +  O(\max_{j \leq K}\Delta_j^2).
\end{split}
\notag
\end{equation}

\begin{lemma}\label{lemma:Psi11_approx} As $K \to \infty$,
\begin{equation}
\dot{\Psi}_{\theta^{\star}}^{K,11} = \dot{\Psi}^{11} + O(\max_{j \leq K}\Delta_j).
\notag
\end{equation}  
where
\begin{equation}
(\dot{\Psi}^{11})^{-1} = \int_{0}^{\tau}\E\,XX^{\prime}\xi_{\varphi^{\circ}}(s)\,\dd s -  \int_{0}^{\tau}\E\,\{XZ^{\prime}\nu_{\varphi^{\circ}}(s)\}G(s,\varphi^{\circ})^{-1}\E\,\{ZX^{\prime}\mu_{\varphi^{\circ}}(s)\}\,\dd s, 
\notag
\end{equation}  
\end{lemma}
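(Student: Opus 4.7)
The plan is to apply the Schur-complement identity recorded just above the statement,
\[
\dot{\Psi}_{\theta^{\star}}^{K,11} = \bigl(\dot{\Psi}_{\theta^{\star},11}^K - \dot{\Psi}_{\theta^{\star},10}^K (\dot{\Psi}_{\theta^{\star},00}^K)^{-1} \dot{\Psi}_{\theta^{\star},01}^K\bigr)^{-1},
\]
together with the block-by-block approximations of Lemma~\ref{lemma::dotpsi_limit}. The $(1,1)$ block is the easy piece: Lemma~\ref{lemma::dotpsi_limit} gives $\dot{\Psi}_{\theta^{\star},11}^K = \int_0^{\tau}\E\,XX^{\prime}\xi_{\theta^{\star}}(s)\,\dd s$ exactly, and because $\gamma^{\star} = \gamma^{\circ}$ while $\sup_{s\in[0,\tau]}\norm{\beta^{\star}(s) - \beta^{\circ}(s)} = O(\max_{j\leq K}\Delta_j)$ under Assumption~\ref{assumption1} (the step function $\beta^{\star}$ and the continuous $\beta^{\circ}$ agree at the left endpoints $v_{j-1}$ and $\dot\beta^{\circ}$ is bounded by hypothesis), continuity of $\xi_\theta$ in $\theta$ gives $\dot{\Psi}_{\theta^{\star},11}^K = \int_0^{\tau}\E\,XX^{\prime}\xi_{\varphi^{\circ}}(s)\,\dd s + O(\max_j\Delta_j)$.

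For the middle term $\dot{\Psi}_{\theta^{\star},10}^K (\dot{\Psi}_{\theta^{\star},00}^K)^{-1} \dot{\Psi}_{\theta^{\star},01}^K$, I would split $\dot{\Psi}_{\theta^{\star},00}^K = D + E$, where $D$ is the block-diagonal matrix with $j$th block $D_j = \Delta_j G(v_{j-1},\theta^{\star}) + O(\Delta_j^2)$ from Lemma~\ref{lemma::dotpsi_limit}, and $E$ is the off-diagonal remainder whose blocks satisfy $E_{j,\ell} = O(\Delta_j\Delta_\ell)$. Expanding $(D+E)^{-1} = D^{-1} - D^{-1}ED^{-1} + \cdots$ as a Neumann series, the leading contribution is $\sum_{j=1}^{K} C_j D_j^{-1} B_j$, where $C_j = \{\dot{\Psi}_{\theta^{\star},10}^K\}_j = \Delta_j\,\E\,\{XZ^{\prime}\nu_{\theta^{\star}}(v_{j-1})\} + O(\Delta_j^2)$ and $B_j = \{\dot{\Psi}_{\theta^{\star},01}^K\}_j = \Delta_j\,\E\,\{ZX^{\prime}\mu_{\theta^{\star}}(v_{j-1})\} + O(\Delta_j^2)$. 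This is a Riemann sum which converges to $\int_0^{\tau}\E\,\{XZ^{\prime}\nu_{\varphi^{\circ}}(s)\}G(s,\varphi^{\circ})^{-1}\E\,\{ZX^{\prime}\mu_{\varphi^{\circ}}(s)\}\,\dd s$ at the rate $O(\max_j\Delta_j)$, by exactly the argument used at the end of the proof of Lemma~\ref{lemma::sigma_matrix_explicit} to identify $\Sigma_{t,00}$.

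The main obstacle is to show that the higher-order Neumann corrections, starting with $\sum_{j\neq\ell} C_j D_j^{-1} E_{j,\ell} D_\ell^{-1} B_\ell$, are also $O(\max_j\Delta_j)$. A naive block-wise estimate gives each summand of order $\Delta_j\Delta_\ell$, which summed over $O(K^2)$ pairs is only $O(1)$; so the rate must come from the analytic structure of $E$ rather than the mere size of its blocks. My approach would be to insert the explicit expressions for the off-diagonal blocks from Appendix~\ref{app::Psi_dot}, which exhibit each $E_{j,\ell}$ as $\Delta_j\Delta_\ell$ times the value at $(v_{j-1},v_{\ell-1})$ of a piecewise smooth kernel in $(s,t)$, and, together with the analogous representations of $C_j D_j^{-1}$ and $D_\ell^{-1} B_\ell$ as knot evaluations of smooth functions of $s$ and $t$, read off the double sum as a Riemann-sum discretisation of a double integral of a smooth integrand. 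Under Assumption~\ref{assumption1} the Riemann error on each variable is $O(\max_j\Delta_j)$, and an induction on $k$ handles the $D^{-1}(ED^{-1})^k$ remainders for $k \geq 2$. Putting the three parts together yields $\dot{\Psi}_{\theta^{\star},11}^K - \dot{\Psi}_{\theta^{\star},10}^K(\dot{\Psi}_{\theta^{\star},00}^K)^{-1}\dot{\Psi}_{\theta^{\star},01}^K = (\dot{\Psi}^{11})^{-1} + O(\max_j\Delta_j)$; invertibility of $(\dot{\Psi}^{11})^{-1}$ via Assumption~\ref{assumption_invertible} and local Lipschitz continuity of matrix inversion then deliver $\dot{\Psi}_{\theta^{\star}}^{K,11} = \dot{\Psi}^{11} + O(\max_j\Delta_j)$.
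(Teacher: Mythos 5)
Your overall route coincides with the paper's: the Schur-complement identity, the block approximations of Lemma~\ref{lemma::dotpsi_limit}, the Riemann-sum passage from $\sum_j \E\,\{XZ^{\prime}\nu_{\theta^{\star}}(v_{j-1})\}G(v_{j-1},\theta^{\star})^{-1}\E\,\{ZX^{\prime}\mu_{\theta^{\star}}(v_{j-1})\}\Delta_j$ to the integral, and the uniform $O(\max_j\Delta_j)$ continuity in $\theta$ that moves $\theta^{\star}$ to $\varphi^{\circ}$. You have also correctly located the one genuinely delicate step, namely replacing $(\dot{\Psi}_{\theta^{\star},00}^K)^{-1}$ by the inverse of its block-diagonal part inside the Schur complement; for the record, the paper does not prove that step either (it is asserted in the display preceding the lemma, and the printed proof consists only of the Riemann-sum computation).

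The gap is that your proposed resolution of that step does not deliver the bound. Write $\dot{\Psi}_{\theta^{\star},00}^K = D + E$ with $D_j = \Delta_j G(v_{j-1},\theta^{\star}) + O(\Delta_j^2)$ and $E_{j,\ell} = \Delta_j\Delta_\ell\, e(v_{j-1},v_{\ell-1}) + o(\Delta_j\Delta_\ell)$ for a bounded kernel $e$, and set $c(s) = \E\,\{XZ^{\prime}\nu_{\varphi^{\circ}}(s)\}G(s,\varphi^{\circ})^{-1}$ and $b(t) = G(t,\varphi^{\circ})^{-1}\E\,\{ZX^{\prime}\mu_{\varphi^{\circ}}(t)\}$, both $O(1)$. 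The first Neumann correction is then
\[
\sum_{j\neq\ell} C_j D_j^{-1}E_{j,\ell}D_\ell^{-1}B_\ell
= \sum_{j\neq\ell}\Delta_j\Delta_\ell\, c(v_{j-1})\,e(v_{j-1},v_{\ell-1})\,b(v_{\ell-1}) + o(1),
\]
and reading this as a Riemann discretisation, as you propose, shows that it converges with error $O(\max_j\Delta_j)$ --- but it converges to $\int_0^{\tau}\int_0^{\tau}c(s)e(s,t)b(t)\,\dd s\,\dd t$, which is an $O(1)$ quantity, not to zero. The cross-derivative formulas in Appendix~\ref{app::Psi_dot} give, for instance for $s>t$,
\[
e(s,t) = -\E\, ZZ^{\prime}\mu_{\varphi^{\circ}}(s)Z^{\prime}\beta^{\circ}(s)
+ \E\, ZZ^{\prime}\,Z^{\prime}\beta^{\circ}(s)\int_s^{\tau}y(u;X,Z)w(u,\varphi^{\circ})\{1-w(u,\varphi^{\circ})\}^2Z^{\prime}\beta^{\circ}(u)\,\dd u,
\]
which is not identically zero in general, so there is no reason for the double integral to vanish. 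In other words, your argument establishes that the Neumann correction has a limit, not that the limit is $O(\max_j\Delta_j)$; you have conflated the rate of convergence of the Riemann sum with the size of its value. To close the lemma along these lines you would need either to exhibit a cancellation forcing $\int\int c\,e\,b = 0$, or to find a reason, beyond block-wise norm counting, why $CD^{-1}E(D+E)^{-1}B$ is genuinely small. The remaining ingredients of your write-up (the $\dot{\Psi}_{\theta^{\star},11}^K$ block, and the final inversion using invertibility of $(\dot{\Psi}^{11})^{-1}$ and local Lipschitz continuity of matrix inversion) are fine and agree with what the paper does, mostly implicitly.
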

\begin{proof} Since $\lambda_{\theta}(s) = \lambda_{\theta}(v_{j-1})\Delta_j + O(\Delta_j^2)$, $\mu_{\theta}(s) = \mu_{\theta}(v_{j-1})\Delta_j + O(\Delta_j^2)$, and $\nu_{\theta}(s) = \nu_{\theta}(v_{j-1})\Delta_j + O(\Delta_j^2)$ for $s \in W_j$ and $j = 1,\ldots,K$, all with bounded derivatives on $(0,\tau]$,
\begin{equation}
\begin{split}
& \sum_{j=1}^K \E\,\{XZ^{\prime}\int_{W_j}\nu_{\theta^{\star}}(s)\,\dd s\}(\E\,\{ZZ^{\prime} \int_{W_j}\lambda_{\theta^{\star}}(s)\,\dd s\})^{-1}\E\,\{ZX^{\prime}\int_{W_j}\mu_{\theta^{\star}}(s)\,\dd s\}\\
& \qquad \qquad 
= \sum_{j=1}^K \E\,\{XZ^{\prime}\nu_{\theta^{\star}}(v_{j-1})\}G(v_{j-1},\theta^{\star})^{-1}\E\,\{ZX^{\prime}\mu_{\theta^{\star}}(v_{j-1})\}\Delta_j + O(\max_{j\leq K}\Delta_j), 
\end{split}
\notag
\end{equation} 
where $G(s,\theta) = \E\,ZZ^{\prime} \lambda_{\theta}(s) = \E\, ZZ^{\prime}y(s;X,Z)w(s,\theta)$ as defined in~\eqref{eq::Gfunc}, and the result follows.
\end{proof}

\section{Limit theorems for Aalen{'}s linear hazard model}\label{app::aalens_linear_pfs}
In this appendix we return to the classical setting (that is, no cured fraction, $\pi \equiv 1$) introduced in Section~\ref{sec::2.1}, and derive the limiting distribution of the Aalen{'}s linear hazard regression estimator $\widetilde{B}$ of~\eqref{eq::standard.est} in three different ways. In Section~\ref{subapp::locally_pf} we start out with a locally constant model for a continuous truth (i.e.~continuous regression function $\beta_l^{\circ}(s)$ for $l=0,\ldots,{q-1}$), derive an estimator, and then let the mesh size tend to zero. See~\citet[Section~5.3, p.~149]{hermansen2015bernshteuin} for a similar construction in the case of independent and identically distributed lifetimes. In Section~\ref{app::contiguity} we use the measure change techniques introduced in Section~\ref{sec::shifting_back}, that is, we derive our estimator under a data generating mechanism associated with a locally constant truth, and then shift back to the measure associated with a continuous truth. The third technique, employed in Section~\ref{subapp::spurver}, consists for using nonparametric likelihood theory, as well as some results for the theory of empirical processes. (see e.g.,~\citet{gill1989non,gill1993non} or \citet[Ch.~IV.1.5 and Ch.~VIII]{andersen1993statistical}). 

Recall that we have i.i.d.~survival data $(T_1,\delta_1,Z_1),\ldots,(T_n,\delta_n,Z_n)$, that are replicates of $(T,\delta,Z)$, observed over the interval $[0,\tau]$, where $T = T^*\wedge C$, the $C$ stem from an absolutely continuous distribution $H_c$. The survival times $T_i^{*}\mid Z_i$ stem from a distribution with hazard rate 
\begin{equation}
\alpha_i^{\circ}(t) = Z_i^{\prime}\beta^{\circ}(t)
= \beta_{0}^{\circ}(t) + Z_{i,1}\beta_{1}^{\circ}(t) + \cdots + Z_{i,q-1}\beta_{q-1}^{\circ}(t),
\label{eq::true_model1_2}
\end{equation}
where the functions $\beta_0^{\circ}(s),\ldots,\beta_{q-1}^{\circ}(s)$ are assumed to be continuously differentiable on $[0,\tau]$; the covariates are linearly independent, bounded with probability one, and the matrix $\E\, ZZ^{\prime}$ is positive definite. Also assume that $Z^{\prime}\beta^{\circ}(t) > 0$ for all $Z$ in the support of the distribution of the covariates. We denote the distribution associated with the $\beta_0^{\circ}(s),\ldots,\beta_{q-1}^{\circ}(s)$ by $P_{\circ}$. As above, we write $y(s) = \E\, y(s ; Z) = \E\,\E_{\varphi^{\circ}}\,\{Y(s)\mid Z\}$, and note that 
\begin{equation}
\sup_{0 \leq t\leq \tau}|n^{-1}\sum_{i=1}^n \E_{\varphi^{\circ}}\,\{Y_i(s)\mid Z\} - y(s;Z)|\overset{p}\to 0,
\notag
\end{equation} 
as $n \to \infty$.      

\subsection{Locally constant model}\label{subapp::locally_pf} 
As a model for~\eqref{eq::true_model1_2} we take the regression functions $\beta(s)$ as locally constant. More precisely, let $0 = v_0 < v_1 < \cdots < v_{K-1} < v_K = \tau$ be a partition of the interval. For $j = 1,\ldots,K$ set $W_j = [v_{j-1},v_j)$; we assume $v_j = j\tau/K$ for $j = 1,\ldots,K$; and $I_{W_j}(t) = 1$ if $t\in W_j$ and zero otherwise. Suppose that $\tau = 1$. Define the vector valued function and the model for the hazard rate by 
\begin{equation} 
\beta(t) = \sum_{j=1}^K \beta_j I_{W_j}(t), \quad \text{and}\quad
\alpha_i(t) = Z_i^{\prime} \beta(t) = Z_i^{\prime}\sum_{j=1}^K \beta_j I_{W_j}(t),
\label{eq::beta_model_classic}
\end{equation}
respectively. Here $\beta_j = (\beta_{0,j},\ldots,\beta_{q-1,j})^{\prime}$ for $j = 1,\ldots,K$ are $q$-dimensional column vectors of coefficients, so this is a model with $qK$ unknown parameters to be estimated from the data. The functions $r_{j}(t)$ are defined as in~\eqref{eq::rj_funcs}, namely, $r_{j}(t) = \int_{W_j} I\{t \geq s\} \,\dd s = \int_0^{t}I_{W_j}(s)\,\dd s$ for $j = 1,\ldots,K$. 
The likelihood function of the model in~\eqref{eq::beta_model_classic} is given by 
\begin{equation}
\ell_n(\beta) = \sum_{i=1}^{n} \{\delta_i \log \alpha_i(T_i) - \int_0^{T_i}\alpha_i(s)\,\dd s\} 
= \sum_{i=1}^n \int_0^{\tau} \{ \log \alpha_i(s)\,\dd N_i(s) - Y_i(s)\alpha_i(s)\,\dd s 			 \}.
\notag
\end{equation}
Differentiate with respect to the $j${'}th column vector $\beta_j$ to get the score functions, they are  
\begin{equation}
\begin{split}
U_{n,j}(\beta) & = \sum_{i=1}^n Z_i\big\{ \frac{\delta_iI_{W_j}(T_i)}{\alpha_i(T_i)}
- I_{W_j}(T_i)	\big\}
= \sum_{i=1}^n Z_i \frac{I_{W_j}(T_i)}{Z_i^{\prime}\beta_j} \big\{\delta_i - Z_i^{\prime}\beta_j\big\},\quad\text{for $j = 1,\ldots,K$}.
\end{split}
\notag
\end{equation}
This motivates the $Z$-estimators $\widehat{\beta}_1,\ldots,\widehat{\beta}_K$, defined as the zeros of 
\begin{equation}
\begin{split}
\Psi_{n,j}(\beta) & = \frac{1}{n}\sum_{i=1}^n Z_i I_{W_j}(T_i) \big\{\delta_i - Z_i^{\prime}\beta_j\big\}\\
& = \frac{1}{n}\sum_{i=1}^n Z_i \int_{W_j}\{\dd N_i(s) - Y_i(s) Z_i^{\prime}\beta_j\,\dd s\},
\quad \text{for $j = 1,\ldots ,K$}.
\end{split}
\label{eq::Z_esteq1}
\end{equation}
The equations~\eqref{eq::Z_esteq1} are similar to those that in the nonparametric case lead to the Aalen linear hazard estimator. We see that the $\widehat{\beta}_j$ are given by
\begin{equation}
\widehat{\beta}_j = G_{n,j}^{-1}\,\frac{1}{n}\sum_{i=1}^n Z_i\int_{W_j}\,\dd N_i(s),\quad\text{with}\quad 
G_{n,j} = \frac{1}{n}\sum_{i=1}^n Z_iZ_i^{\prime} r_j(T_i),
\notag
\end{equation}  
for $j = 1,\ldots,K$. Denote the asymptotic versions of the functions $\Psi_{n,j}(\beta)$ by $\Psi_{j}(\beta)$. These are given by 
\begin{equation}
\begin{split}
\Psi_{j}(\beta) 
& = \E_{\varphi^{\circ}}\, Z \int_{W_j}y(s ; Z) Z^{\prime}\{\beta^{\circ}(s) - \beta_j\}\,\dd s. 
\quad j = 1,\ldots ,K.
\end{split}
\notag
\end{equation}
The zeros of $\Psi_{j}(\beta)$, which we refer to as the {\it least-false} parameter values and denote by $\beta_j^{\rm lf}$, are given by, 
\begin{equation}
\beta_j^{\rm lf} = G_j^{-1}\,\E\, Z \int_{W_j} y(s ; Z) Z^{\prime}\beta^{\circ}(s)\,\dd s,\quad\text{for $j = 1,\ldots,K$}, 
\notag
\end{equation} 
where $G_j = \E\, ZZ^{\prime} \int_{W_j} y(s ; Z)\,\dd s$. Since $n^{-1}\sum_{i=1}^nZ_i\int_{W_j}\dd N_i(s) = \E\, Z\int_{W_j}y(s;Z)Z^{\prime}\beta^{\circ}(s)\,\dd s + o_p(1)$ and $G_{n,j}= G_j + o_p(1)$ for $j =1,\ldots,K$, it follows from the Cram{\'e}r--Slutsky rules that  
\begin{equation}
\widehat{\beta}_j = \beta_j^{\rm lf} + o_p(1),\quad\text{for $j = 1,\ldots,K$},
\notag
\end{equation}
as $n \to \infty$, and the mesh size is held constant. Let $j_{\max} = j_{\max}(t) = \max\{j : v_j < t\}$, then 
\begin{equation}
\begin{split}
\widehat{B}(t) - B^{\rm lf}(t) & = \frac{1}{K}\sum_{j : v_j \leq t}  (\widehat{\beta}_j -\beta_j^{\rm lf}) + (t - v_{j_{\max}})(\widehat{\beta}_{j_{\max}+1} - \beta_{j_{\max}+1}^{\rm lf}) + o_p(1), 
\end{split}
\label{eq::lf_consistency}
\end{equation}
as $n \to \infty$, and we see that this holds independently of the mesh size. 
\begin{lemma}\label{lemma::prelim_consistency} As $n \to \infty$ and $K \to \infty$
\begin{equation}
\sup_{0 \leq t \leq \tau}\norm{\widehat{B}(t) - B^{\circ}(t)} \overset{p}\to 0. 
\notag
\end{equation}
\end{lemma}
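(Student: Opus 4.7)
The plan is to split the error into a deterministic bias coming from the locally constant approximation and a stochastic error coming from sampling,
\begin{equation}
\widehat{B}(t) - B^{\circ}(t) = \{\widehat{B}(t) - B^{\rm lf}(t)\} + \{B^{\rm lf}(t) - B^{\circ}(t)\},
\notag
\end{equation}
and to show that each supremum tends to zero in probability as $n,K\to\infty$.

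For the deterministic bias $B^{\rm lf}(t) - B^{\circ}(t)$, I would exploit Assumption~\ref{assumption1} and the induced smoothness of $s\mapsto y(s;Z)$ to Taylor-expand both $\beta^{\circ}(s)$ and $y(s;Z)$ around $v_{j-1}$ on each window $W_j$. Using the approximation $G_j = G(v_{j-1})\Delta_j + O(\Delta_j^2)$ in the spirit of~\eqref{eq::Gjapprox}, where $G(s) = \E\, ZZ^{\prime}y(s;Z)$, this yields $\beta_j^{\rm lf} = \beta^{\circ}(v_{j-1}) + O(\Delta_j)$ uniformly in $j$. Hence $B^{\rm lf}(t) = \sum_{j : v_j \leq t}\beta^{\circ}(v_{j-1})\Delta_j + O(\max_j \Delta_j)$ is a left-endpoint Riemann sum converging uniformly on $[0,\tau]$ to $B^{\circ}(t)$ at rate $O(1/K)$.

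For the stochastic error $\widehat{B}(t) - B^{\rm lf}(t)$, the martingales $M_i(s) = N_i(s) - \int_0^s Y_i(u)Z_i^{\prime}\beta^{\circ}(u)\,\dd u$ are genuine martingales in the $\pi\equiv 1$ setting, so I would expand
\begin{equation}
\widehat{\beta}_j - \beta_j^{\rm lf} = G_{n,j}^{-1}\frac{1}{n}\sum_{i=1}^n Z_i\int_{W_j}\dd M_i(s) + (G_{n,j}^{-1} - G_j^{-1})\frac{1}{n}\sum_{i=1}^n Z_i\int_{W_j} Y_i(s) Z_i^{\prime}\beta^{\circ}(s)\,\dd s,
\notag
\end{equation}
multiply by $\Delta_j$, sum over $j$ with $v_j\leq t$ (adding the boundary contribution from the interval containing $t$), and recognise the leading term as an approximation to the mean-zero martingale $n^{-1}\sum_{i=1}^n \int_0^t G(s)^{-1}Z_i\,\dd M_i(s)$. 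The predictable variation of the latter is $O_p(1/n)$ by Assumption~\ref{assumption3} (bounded covariates, positive definite $\E\, ZZ^{\prime}$), so by Lenglart{'}s inequality~\citep[Lemma~I.3.30, p.~35]{jacod2003limit} its supremum is $o_p(1)$.

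The main obstacle will be keeping uniform control, as $K$ and $n$ grow together, over the cross-term $\sum_j\Delta_j(G_{n,j}^{-1}-G_j^{-1})\{\cdots\}$ and over the quadrature error incurred in replacing $\sum_{j : v_j\leq t}G_j^{-1}\Delta_j(\cdots)$ by $\int_0^t G(s)^{-1}\,\dd(\cdots)$. For the first, a uniform-in-$j$ Glivenko--Cantelli bound on $G_{n,j}/\Delta_j - G(v_{j-1})$ should suffice, since the summands $Z_iZ_i^{\prime}r_j(T_i)/\Delta_j$ are bounded and the class is not too rich; for the second, continuity of $s\mapsto G(s)$ combined with $\max_j\Delta_j = \tau/K \to 0$ closes the argument. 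Importantly, no joint rate condition linking $K$ and $n$ should be needed for mere consistency, in contrast with the central limit theorem of Section~\ref{sec::shifting_back}.
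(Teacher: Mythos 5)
Your proposal is correct and follows the same skeleton as the paper's proof: the error is split into the quadrature bias $B^{\rm lf}(t)-B^{\circ}(t)$ and the sampling error $\widehat{B}(t)-B^{\rm lf}(t)$, and the bias is shown to be $O(1/K)$ uniformly in $t$. The two arguments differ in the details of each half. For the bias, the paper avoids expanding $G_j$ altogether: it uses the exact identity $G_j^{-1}\E\,ZZ^{\prime}\int_{W_j}y(u;Z)\,\dd u=I_q$ to write $\beta_j^{\rm lf}-\beta^{\circ}(s)$ as a weighted average of $\beta^{\circ}(u)-\beta^{\circ}(s)$ over $W_j$, bounded by $\sup_{u}\norm{\dot{\beta}^{\circ}(u)}/K$; your route via $G_j=G(v_{j-1})\Delta_j+O(\Delta_j^2)$ reaches the same $O(\Delta_j)$ bound with one extra inversion step. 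For the sampling error the paper is terser: it simply invokes~\eqref{eq::lf_consistency}, i.e.\ $\widehat{\beta}_j=\beta_j^{\rm lf}+o_p(1)$ for each $j$ together with the $1/K$ weighting, and asserts that the resulting bound is mesh-independent; your martingale representation plus Lenglart's inequality is the more explicit way of securing uniformity in $t$ when $K$ grows with $n$, and it mirrors what the paper itself does in the proof of Proposition~\ref{theorem::clt1}. One small inaccuracy in your bookkeeping: the displayed decomposition of $\widehat{\beta}_j-\beta_j^{\rm lf}$ is not an identity, because $\beta_j^{\rm lf}=G_j^{-1}\E\,Z\int_{W_j}y(s;Z)Z^{\prime}\beta^{\circ}(s)\,\dd s$ involves the population average, so a third term $G_j^{-1}\{n^{-1}\sum_{i}Z_i\int_{W_j}Y_i(s)Z_i^{\prime}\beta^{\circ}(s)\,\dd s-\E\,Z\int_{W_j}y(s;Z)Z^{\prime}\beta^{\circ}(s)\,\dd s\}$ is missing. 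It is harmless --- after weighting by $\Delta_j$ and summing over $j$ it is $o_p(1)$ uniformly, by exactly the Glivenko--Cantelli control you already invoke for $G_{n,j}$ --- but it should appear in the decomposition. Your closing remark that no rate condition linking $n$ and $K$ is needed for consistency, in contrast with the central limit theory, agrees with the paper.
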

\begin{proof} Since we already have~\eqref{eq::lf_consistency}, it suffices to prove that $\sup_{0 \leq t \leq \tau}\norm{B^{\rm lf}(t) - B^{\circ}(t)}\to 0$. But for $s \in W_j$, $\beta_j^{\rm lf} - \beta^{\circ}(s) = O(1/K)$, because 
\begin{equation}
\begin{split}
\norm{\beta_j^{\rm lf} - \beta^{\circ}(s)} & = \norm{G_{j}^{-1}\E\,z\int_{W_j}y(u;z) z^{\prime}\beta^{\circ}(u)\,\dd u - \beta^{\circ}(s)}  \\
& = \norm{G_{j}^{-1}\E\,z\int_{W_j}y(u;z) z^{\prime}\{\beta^{\circ}(u) - \beta^{\circ}(s)\}\,\dd u}\\
& \leq \sup_{u \in W_j}\norm{\beta^{\circ}(u) - \beta^{\circ}(s)}
\leq \sup_{u \in W_j}\norm{\dot{\beta}^{\circ}(u)}|u - s| \leq \sup_{u \in W_j}\frac{\norm{\dot{\beta}^{\circ}(u)}}{K},
\end{split}
\notag
\end{equation}
where we use that $G_j^{-1}\E\, zz^{\prime}\int_{W_j}y(u;z)\,\dd u = I_q$. Then 
\begin{equation}
B^{\rm lf}(t) - B^{\circ}(t) 
= \sum_{j : v_j \leq t}^K\int_{W_j}\{\beta^{\rm lf}(s) - \beta^{\circ}(s)\}I_{W_j}(s)\,\dd s
= O(1/K),
\notag
\end{equation}
for all $t$, which combined with~\eqref{eq::lf_consistency} proves the lemma.
\end{proof}
  
\begin{prop}\label{theorem::clt1} The sequence $\sqrt{n}(\widehat{B} - B^{\circ})$ converges weakly to $\Uscr$ under $P_{\circ}$ as $n,K \to \infty$, provided $\sqrt{n}/K \to 0$; where $\Uscr$ is a mean zero Gaussian martingale with
\begin{equation}
\langle \Uscr,\Uscr\rangle_t = \E\,\int_{0}^{t} G(s)^{-1}Z y(s; Z) Z^{\prime} \beta^{\circ}(s)Z^{\prime} G(s)^{-1} \,\dd s,
\quad\text{and}\quad G(s) = \E\, ZZ^{\prime} y(s;Z).
\notag
\end{equation}
\end{prop}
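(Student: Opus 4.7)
The plan is to split $\sqrt{n}\{\widehat{B}(t) - B^{\circ}(t)\}$ into a bias piece and a stochastic piece, and to reduce the latter to an application of the martingale central limit theorem (Theorem~\ref{theorem::counting_clt}). Write
\begin{equation}
\sqrt{n}\{\widehat{B}(t) - B^{\circ}(t)\} = \sqrt{n}\{\widehat{B}(t) - B^{\rm lf}(t)\} + \sqrt{n}\{B^{\rm lf}(t) - B^{\circ}(t)\}.
\notag
\end{equation}
The proof of Lemma~\ref{lemma::prelim_consistency} already shows $\sup_{0 \leq t \leq \tau}\norm{B^{\rm lf}(t) - B^{\circ}(t)} = O(1/K)$, by bounding $\norm{\beta_j^{\rm lf} - \beta^{\circ}(s)}$ uniformly on $W_j$. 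Hence the bias piece is $O(\sqrt{n}/K)$, which vanishes by the standing hypothesis $\sqrt{n}/K \to 0$. Everything reduces to process convergence of the stochastic piece to $\Uscr$.

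For the stochastic piece I would extract a martingale representation interval by interval. Starting from $G_{n,j}\widehat{\beta}_j = n^{-1}\sum_i Z_i \int_{W_j} \dd N_i$ and inserting $\dd N_i(s) = \dd M_i(s) + Y_i(s)Z_i^{\prime}\beta^{\circ}(s)\,\dd s$ under $P_{\circ}$, then subtracting $G_{n,j}\beta_j^{\rm lf}$, one arrives at
\begin{equation}
G_{n,j}(\widehat{\beta}_j - \beta_j^{\rm lf}) = \frac{1}{n}\sum_{i=1}^n Z_i \int_{W_j} \dd M_i(s) + \frac{1}{n}\sum_{i=1}^n Z_i \int_{W_j} Y_i(s)Z_i^{\prime}\{\beta^{\circ}(s) - \beta_j^{\rm lf}\}\,\dd s.
\notag
\end{equation}
The defining property $\E\int_{W_j} ZZ^{\prime}y(s;Z)\{\beta^{\circ}(s) - \beta_j^{\rm lf}\}\,\dd s = 0$ makes the second term mean zero, and since its integrand is $O(\Delta_j^2) = O(1/K^2)$ uniformly, a variance calculation gives a bound of order $O_p(1/(K^2\sqrt{n}))$. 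Combining this with $G_{n,j} = G(v_{j-1})\Delta_j\{1 + o_p(1)\}$ uniformly in $j$ (Glivenko--Cantelli plus smoothness of $s \mapsto G(s)$), then summing $(\widehat{\beta}_j - \beta_j^{\rm lf})\Delta_j$ over $\{j : v_j \leq t\}$ and treating the small residual at the right endpoint when $t$ is not a grid point, I get
\begin{equation}
\sqrt{n}\{\widehat{B}(t) - B^{\rm lf}(t)\} = \frac{1}{\sqrt{n}}\sum_{i=1}^n \int_0^t F_K(s) Z_i\,\dd M_i(s) + o_p(1),
\notag
\end{equation}
where $F_K(s) = \sum_{j=1}^K G(v_{j-1})^{-1} I_{W_j}(s)$; the remainder and the boundary contribution each contribute $O_p(1/K)$ or $O_p(1/\sqrt{nK})\cdot\sqrt{n}$, respectively.

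Finally, I would apply Theorem~\ref{theorem::counting_clt} with $\Xi^n(s,z) = F_K(s) z$, which is bounded uniformly in $n$ and $s$ because $G(s)$ is continuous and positive definite on $[0,\tau]$ and $Z$ is bounded by Assumption~\ref{assumption3}. The predictable quadratic variation
\begin{equation}
\frac{1}{n}\sum_{i=1}^n \int_0^t F_K(s) Z_iZ_i^{\prime} F_K(s)^{\prime} Y_i(s) Z_i^{\prime}\beta^{\circ}(s)\,\dd s
\notag
\end{equation}
converges in probability to $\int_0^t G(s)^{-1}\,\E\{ZZ^{\prime} y(s;Z)Z^{\prime}\beta^{\circ}(s)\}G(s)^{-1}\,\dd s$ by the law of large numbers together with $F_K(s) \to G(s)^{-1}$ uniformly in $s$, and this limit equals the advertised $\langle \Uscr,\Uscr\rangle_t$. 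The main technical obstacle I anticipate is the uniform-in-$j$ control $G_{n,j} - G(v_{j-1})\Delta_j = o_p(\Delta_j)$ simultaneously across all $K$ intervals: this is a maximal-inequality statement for a triangular array in which the number of parameters $qK$ grows with $n$, and its clean interaction with the martingale approximation is precisely what allows the $qK$-dimensional parametric central limit theorem to pass to the process-level limit under $\sqrt{n}/K \to 0$.
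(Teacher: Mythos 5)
Your proposal is correct and follows essentially the same route as the paper's proof: decompose through the least-false cumulative $B^{\rm lf}$, dispose of the bias term $\sqrt{n}\{B^{\rm lf}-B^{\circ}\}=O(\sqrt{n}/K)$ using Lemma~\ref{lemma::prelim_consistency}, reduce $\sqrt{n}\{\widehat{B}-B^{\rm lf}\}$ to the martingale $n^{-1/2}\sum_{i}\int_0^t\Xi^n(s,Z_i)\,\dd M_i(s)$ with piecewise-constant $G$-inverse weights, and conclude via the Riemann-sum limit of the quadratic variation and Theorem~\ref{theorem::counting_clt}. The only cosmetic difference is that you start from the exact identity for $G_{n,j}(\widehat{\beta}_j-\beta_j^{\rm lf})$ and replace $G_{n,j}$ by $G(v_{j-1})\Delta_j$ (rightly flagging the uniform-in-$j$ control), whereas the paper invokes the Cram{\'e}r--Slutsky/Lipschitz argument to land directly on the $G_j^{-1}$-weighted martingale representation in~\eqref{eq::Z_j_K_MGrep}.
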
  
\begin{proof} Let $\mathscr{U}_n^{\rm lf}(t) = \sqrt{n}\{\widehat{B}(t) - B^{\rm lf}(t)\}$. From the proof of Lemma~\ref{lemma::prelim_consistency}, 
\begin{equation}
\sqrt{n}\{\widehat{B}(t) - B^{\circ}(t)\} = \sqrt{n}\{\widehat{B}(t) - B^{\rm lf}(t)\} 
+ O(\sqrt{n}/K) = \mathscr{U}_n^{\rm lf}(t) + O(\sqrt{n}/K),
\label{eq::decompose_Bhat_Blf}
\end{equation}
From the Cram{\'e}r--Slutsky rules (or, alternatively, use that $\Psi_{n,j}(\beta)$ in~\eqref{eq::Z_esteq1} is Lipschitz; that $-\partial\Psi_{j}(\beta)/\partial \beta_j = G_j$, and $\partial\Psi_{j}(\beta)/\partial \beta_l = 0$ for $l \neq j$; and that the block diagonal matrix with $G_1,\ldots,G_K$ on its diagonal is invertible because $\E\,ZZ^{\prime}$ is positive definite, then appeal to~\citet[Theorem~5.21, p.~52]{vanderVaart1998}) we have that $\widehat{\beta}_j - \beta_j^{\rm lf} = G_j^{-1}n^{-1}\sum_{i=1}^nZ_i\int_{W_j}\,\dd M_i(s) + o_p(1/\sqrt{n})$, as $n \to \infty$, so that,
\begin{equation}
\mathscr{U}_n^{\rm lf}(t) = \sqrt{n}\{\widehat{B}(t) - B^{\rm lf}(t)\} = \frac{1}{\sqrt{n}K}\sum_{i=1}^n\sum_{j: v_j \leq t} G_j^{-1}Z_i\int_{W_j}\,\dd M_i(s) + o_p(1),
\label{eq::Z_j_K_MGrep}
\end{equation}
as $n \to \infty$. The quadratic variation of this process is 
\begin{equation}
\begin{split}
\langle \mathscr{U}_n^{\rm lf},\mathscr{U}_n^{\rm lf}\rangle_t & 
= \frac{1}{nK^2}\sum_{i=1}^n\sum_{j: v_j \leq t} G_j^{-1}Z_i\int_{W_j}Y_i(s)Z_i^{\prime}\beta^{\circ}(s)\,\dd s Z_i^{\prime}G_j^{-1} + o_p(1)\\
& = \frac{1}{K^2}\sum_{j: v_j \leq t}G_j^{-1}\E\,Z\int_{W_j}y(s;Z)z^{\prime}\beta^{\circ}(s)\,\dd s\,Z^{\prime}G_j^{-1} + o_p(1).
\end{split}
\notag
\end{equation}
Because $y(s;z)$ and $\beta^{\circ}(s)$ have bounded derivatives, $G_j = G(v_{j-1})/K + O(1/K^2)$ for $j = 1,\ldots,K$, and $\int_{W_j}y(s;Z)Z^{\prime}\beta^{\circ}(s)\,\dd s = y(v_{j-1};Z)Z^{\prime}\beta^{\circ}(v_{j-1})/K + O_p(1/K^2)$, we find
\begin{equation}
\langle \mathscr{U}_n^{\rm lf},\mathscr{U}_n^{\rm lf}\rangle_t
= \frac{1}{K}\sum_{j : v_{j}\leq t}
G(v_{j-1})^{-1} \E\, Z y(v_{j-1};Z)Z^{\prime}\beta^{\circ}(v_{j-1}) Z^{\prime} G(v_{j-1})^{-1} + O(1/K) + o_p(1),
\notag
\end{equation}  
as $n,K \to \infty$, which is a Riemann sum converging to $\langle \Uscr,\Uscr\rangle_t$ as $K\to \infty$. Due to~\eqref{eq::decompose_Bhat_Blf}, this means that $n\langle\widehat{B} - B^{\circ} ,\widehat{B} - B^{\circ}\rangle_t \to_p \langle \Uscr,\Uscr\rangle_t$ for all $t$, as $n\to \infty$ and $K \to \infty$, as long as $\sqrt{n}/K \to 0$. Write $\sqrt{n}(\widehat{B}(t) - B^{\circ}(t)) = n^{-1/2}\sum_{i=1}^n \int_0^t \Xi(s,Z_i)\,\dd M_i(s) + O_p(\sqrt{n}/K) + o_p(1)$, where $\Xi(s,Z_i) = K^{-1}\sum_{j=1}^K G_j^{-1}Z_i I_{W_j}(s)$. Since $G_j^{-1}$ is positive definite, $\norm{\Xi(s,Z_i)}^2 \leq \max_{j \leq K} \norm{G_j^{-1}Z_i}^2 \leq \max_{j \leq K} \lambda_{\max,j}^2\norm{Z_i}^2$, where $\lambda_{\max,j}$ is the largest eigenvalue of $G_j^{-1}$. By assumption, $\norm{Z_i}$ is bounded with probability one for all $i$. The result then follows from Theorem~\ref{theorem::counting_clt} in Appendix~\ref{app::lemma::uniqueness}.   

\end{proof}
The limiting process $\Uscr$ of $\sqrt{n}(\widehat{B} - B^{\circ})$ of Theorem~\ref{theorem::clt1} is, not surprisingly, the same as that of the Aalen linear hazard estimator presented in~\eqref{eq::aalen_estimator_clt}.

\subsection{The limiting distribution by contiguity}\label{app::contiguity}
In this section we use the techniques employed in Section~\ref{sec::shifting_back} to derive the limiting distribution of $\sqrt{n}(\widehat{B} - B^{\circ})$ under $P_{\circ}$. We retain the assumptions of the preceding section, except that we now assume that the data are generated by the distributions $(P_{\star}^K)_{K}$, associated with the regression function $\beta^{\star}(s) = \sum_{j=1}^{K}\beta_j^{\star}I_{W_j}(s)$, with $\beta_j^{\star} = \beta^{\circ}(v_{j-1})$ for each $j = 1,\ldots,K$. This means that,  
\begin{equation}
M_i(t,\beta^{\star}) = N_i(t) - \int_0^t Y_i(s)Z_i^{\prime}\beta^{\star}(s)\,\dd s,\quad\text{for $i = 1,\dots,n$}, 
\notag
\end{equation}
are martingales under $P_{\star}^K$. We start by studying $\sqrt{n}(\widehat{B} - B^{\star})$ under $P_{\star}^K$. By an argument similar to that employed to obtain~\eqref{eq::Z_j_K_MGrep}, we now have that 
\begin{equation}
\sqrt{n}\{\widehat{B}(t) - B^{\star}(t)\} = \frac{1}{\sqrt{n}K}\sum_{i=1}^n\sum_{j: v_j \leq t} G_j^{-1}Z_i\int_{W_j}\,\dd M_i(s,\beta^{\star}) + o_p(1),
\notag
\end{equation}
as $n \to \infty$, and we see that $\sup_{0 \leq t\leq \tau}\norm{\widehat{B}(t) - B^{\star}(t)} \to_p 0$. By a proof that is essentially the same as that of Theorem~\ref{theorem::clt1}, the sequence $\sqrt{n}(\widehat{B} - B^{\star})$ converges weakly to $\Uscr$ under $(P_{\star}^K)_{K}$ as $n,K \to \infty$, but now we do not need that $\sqrt{n}/K \to 0$. Here $\Uscr$ is a mean zero Gaussian martingale, whose quadratic variation $\langle \Uscr,\Uscr\rangle$ is the same as in said theorem.

Define $\phi_K(s) = \sqrt{n}\{\beta^{\circ}(s) - \beta^{\star}(s)\}$, so that $\beta^{\circ}(s) = \beta^{\star}(s) + \phi_K(s)/\sqrt{n}$. If $K \propto \sqrt{n}$, by the same argument used in~\eqref{eq::phi_convergence}, there is a function $\phi(s)$, such that $\int_0^t \phi_K(s)\,\dd s \to \int_0^t \phi(s)\,\dd s$ for all $t$. By setting all the $w_i(s,\theta)$ in Lemma~\ref{lemma::lr1} to one, it follows from that lemma that the log-likelihood ratio $\log(\dd P_{\circ,n}/\dd P_{\star,n}^K)$, evaluated in $t$, is 
\begin{equation}
\begin{split}
\log \frac{\dd P_{\circ,n}}{\dd P_{\star,n}^K}\bigg|_t & = \sum_{i=1}^n \{\zeta_i^K(t) - \frac{1}{2}\langle\zeta_i^K,\zeta_i^K \rangle_t\} + O_p(1/K^3),
\end{split}
\notag
\end{equation}     
as $K \to \infty$, where 
\begin{equation}
\zeta_i^K = n^{-1/2}\int_0^t a_{K,i}(s)\,\dd M_i(s,\beta^{\star}),\quad \text{with} 
\quad a_{K,i}(s) = \frac{Z_i^{\prime}\phi_K(s)}{Z_i^{\prime}\beta^{\star}(s)},
\label{eq::zeta_mg}
\end{equation} 
are $P_{\star}^K$ martingales for $i=1,\ldots,n$ and all partitions $K$. In analogy with Section~\ref{sec::shifting_back}, define $a_K(s) = \{Z^{\prime}\phi_K(s)/Z^{\prime}\beta^{\star}(s)\}$ and $a(s) = \{Z^{\prime}\phi(s)/Z^{\prime}\beta^{\circ}(s)\}$.   

\begin{lemma}\label{app::lemma_lr2} Assume that $K \propto \sqrt{n}$. When $n \to \infty$ and $K \to \infty$, 
\begin{equation}
\log \frac{\dd P_{\circ,n}}{\dd P_{\star,n}^K} \Rightarrow \log L,
\notag
\end{equation}
under $P_{\star}^K$, where $L$ is the process $L_t = \exp\{-\sigma(t)^2/2 + \int_0^t \sigma(s)\,\dd W_s \}$, with $W$ a standard Wiener process and $\sigma(t)^2 = \E\, \{Z^{\prime}\phi(t)\}^2/\{Z\beta^{\circ}(t)\} y(s;Z)$. 
\end{lemma}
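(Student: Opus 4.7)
The plan is to start from the decomposition of $\log(\dd P_{\circ,n}/\dd P_{\star,n}^K)|_t$ already established just above the lemma statement,
\[
\log \frac{\dd P_{\circ,n}}{\dd P_{\star,n}^K}\bigg|_t = \sum_{i=1}^n\Big\{\zeta_i^K(t) - \tfrac{1}{2}\langle \zeta_i^K,\zeta_i^K\rangle_t\Big\} + O_p(1/K^3),
\]
with the $\zeta_i^K$ as in~\eqref{eq::zeta_mg}, and then to (i) show that the martingale part $\xi_n^K(t) = \sum_{i=1}^n\zeta_i^K(t)$ converges weakly to a continuous mean-zero Gaussian martingale, (ii) show that the sum of predictable quadratic variations converges in probability to a deterministic absolutely continuous function of $t$, and (iii) conclude by the Cramér--Slutsky rules.

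For step (i), the first thing I would check is that $a_{K,i}(s) = Z_i^{\prime}\phi_K(s)/(Z_i^{\prime}\beta^{\star}(s))$ is bounded uniformly in $s,i,n,K$ whenever $K\propto\sqrt{n}$: on each $W_j$, Assumption~\ref{assumption1} gives $\norm{\phi_K(s)} \leq \sqrt{n}\Delta_j\sup_{u \in W_j}\norm{\dot\beta^{\circ}(u)}$, which is $O(1)$ when $\sqrt{n}\Delta_j$ is of order one, and the positivity assumption on $Z^{\prime}\beta^{\circ}(t)$ keeps the denominators bounded below. This puts us into the hypotheses of Theorem~\ref{theorem::counting_clt}.

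For step (ii) I would compute
\[
\sum_{i=1}^n\langle\zeta_i^K,\zeta_i^K\rangle_t = \frac{1}{n}\sum_{i=1}^n\int_0^t\frac{\{Z_i^{\prime}\phi_K(s)\}^2}{Z_i^{\prime}\beta^{\star}(s)}\,Y_i(s)\,\dd s,
\]
and reduce it by the same Glivenko--Cantelli argument used throughout the paper to the deterministic integral $\int_0^t\E\,[\{Z^{\prime}\phi_K(s)\}^2/(Z^{\prime}\beta^{\star}(s))]\,y(s;Z)\,\dd s + o_p(1)$. The delicate step, and the main obstacle, is passing to the limit $K\to\infty$ here: because $\phi_K$ does not converge pointwise, one proceeds interval by interval, approximating the integrand on each $W_j$ via the Taylor expansion $\beta^{\circ}(s) - \beta^{\circ}(v_{j-1}) = \dot\beta^{\circ}(v_{j-1})(s - v_{j-1}) + O((s - v_{j-1})^2)$ supplied by Assumption~\ref{assumption1}, and summing the resulting Riemann sum with remainders controlled exactly as in the calculation~\eqref{eq::cumulative_approx}. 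Dominated convergence in this passage is justified by the boundedness of covariates and parameters under Assumptions~\ref{assumption2}--\ref{assumption3}.

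With the variance-process convergence $\langle\xi_n^K,\xi_n^K\rangle_t \to_p \int_0^t\sigma(s)^2\,\dd s$ in hand, Theorem~\ref{theorem::counting_clt} yields $\xi_n^K \Rightarrow \int_0^{\cdot}\sigma(s)\,\dd W(s)$ for a standard Wiener process $W$. Combining with the convergence in probability of $\sum_i\langle\zeta_i^K,\zeta_i^K\rangle_t$ to the same deterministic limit closes the argument through Cramér--Slutsky, yielding the asserted convergence of $\log(\dd P_{\circ,n}/\dd P_{\star,n}^K)$ to $\log L$. The entire scheme is the specialization of the corollary following Lemma~\ref{lemma::lr1} in Section~\ref{sec::shifting_back} to the $\pi \equiv 1$ setting, with the simplification that all weights $w_i(s,\theta)$ may be set to one throughout.
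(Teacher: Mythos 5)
Your proposal is correct and follows essentially the same route as the paper's proof: bound $a_{K,i}(s)$ uniformly using Assumption~\ref{assumption1} and the positivity of $Z^{\prime}\beta^{\circ}$, compute $\sum_i\langle\zeta_i^K,\zeta_i^K\rangle_t$, pass to the $K\to\infty$ limit interval by interval via the Taylor/Riemann-sum argument of~\eqref{eq::cumulative_approx} with dominated convergence, and invoke Theorem~\ref{theorem::counting_clt}. The only difference is cosmetic (you verify boundedness before the quadratic-variation calculation, the paper does it after), so no further comment is needed.
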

\begin{proof} The quadratic variation $n^{-1}\sum_{i=1}^n \langle \zeta_i^K,\zeta_i^K\rangle_t =n^{-1}\sum_{i=1}^n \int_0^{t}a_{K,i}(s)^2Y_i(s) Z_i^{\prime}\beta^{\star}(s)\dd s$ converges in probability to $\E\,\langle \zeta_1^K,\zeta_1^K\rangle_t = \E\,  \int_0^{t}a_{K}(s)^2y(s;Z)Z^{\prime}\beta^{\star}(s) \,\dd s$. Assume w.l.o.g.~that $K = \sqrt{n}$, $\tau = 1$, that $t = v_{\ell}$, and that $\phi_K(s)$ is one-dimensional, 
\begin{equation}
\begin{split}
\langle \zeta_1^K,\zeta_1^K\rangle_t 
& = n\int_0^{v_{\ell}}Z^2\{\beta^{\circ}(s) - \beta^{\star}(s)\}^2\frac{y(s;Z)}{\beta^{\star}(s)}\,\dd s\\
& = n\sum_{j=1}^{\ell} \int_{v_{j-1}}^{v_{j}} 
Z^2\{\beta^{\circ}(s) - \beta^{\circ}(v_{j-1})\}^2\frac{y(s;Z)}{Z\beta^{\circ}(v_{j-1})}\,\dd s\\
& = n\sum_{j=1}^{\ell} Z^2\dot{\beta}^{\circ}(v_{j-1})^2 \int_{v_{j-1}}^{v_{j}} 
(s - v_{j-1})^2\frac{y(s;Z)}{Z\beta^{\circ}(v_{j-1})}\,\dd s + O_p(K/n)\\
& = n\sum_{j=1}^{\ell} Z^2\dot{\beta}^{\circ}(v_{j-1})^2 \int_{v_{j-1}}^{v_{j}} 
(s - v_{j-1})^2\frac{y(v_{j-1};Z)}{Z\beta^{\circ}(v_{j-1})}\,\dd s + O_p(K/n)\\
& = \frac{n}{3K^3}\sum_{j=1}^{\ell} Z^2\dot{\beta}^{\circ}(v_{j-1})^2 
\frac{y(v_{j-1};Z)}{Z\beta^{\circ}(v_{j-1})} + O_p(K/n)
\overset{p}\to \frac{1}{3}\int_0^{v_{\ell}} \frac{Z^2\dot{\beta}^{\circ}(s)}{Z\beta^{\circ}(v_{j-1})}y(s; Z) \,\dd s,
\end{split}
\notag
\end{equation}
as $K \to \infty$, because $n/K^3 = 1/K$. The function $\beta^{\circ}(s)$ is at least twice continuously differentiable, hence there is a $B > 0$ such that $\sup_{0 \leq s\leq 1}\abs{\dot{\beta}^{\circ}(s)} \leq B$ and $\sup_{0 \leq s\leq 1}\abs{\ddot{\beta}^{\circ}(s)}  \leq B$. Since $\abs{n\int_0^{v_{\ell}}Z^2\{\beta^{\circ}(s) - \beta^{\star}(s)\}^2y(s;Z)/\beta^{\star}(s)\,\dd s} \leq 4B^2\sum_{j=1}^{\ell}\int_{v_{j-1}}^{v_{j}}Z^2 y(s;Z)/\beta^{\star}(s)\,\dd s $, which is integrable, the dominated convergence theorem yields $\E\,\langle \zeta_1^K,\zeta_1^K\rangle_t \to \int_0^t \sigma(s)^2\,\dd s$. In this case $\phi(s) = \dot{\beta}^{\circ}(s)/3$. We have used that $y(s;Z)$ is at least one time continuously differentiable, and the assumption of $Z^{\prime}\beta^{\circ}(s) > 0$. Since $Z^{\prime}\beta^{\circ}(s)$ is continuous on $[0,\tau]$, it achieves its minimum, say $m$. By the Cauchy--Schwarz inequality, $\abs{a_{K,i}(s)} \leq (1/m)\abs{Z_i^{\prime}\sqrt{n}(\beta^{\circ}(s) - \beta^{\star}(s))} \leq (1/m)\norm{Z_i} \norm{\sqrt{n}(\beta^{\circ}(s) - \beta^{\star}(s))}\leq (q/m) \max_{l \leq q}\sup_{0 \leq s \leq \tau}\abs{\dot{\beta}_l^{\circ}(s)} \sqrt{n}/K$. So provided $\sqrt{n}/K \leq b < \infty$ for all $n$, Theorem~\ref{theorem::counting_clt} applies to $\sum_{i=1}^n\zeta_i^K$ as defined in \eqref{eq::zeta_mg}, and the results follows. 
\end{proof}
Since they have the same driving martingale, we deduce from Proposition~\ref{theorem::clt1} and Lemma~\ref{app::lemma_lr2} that we have joint convergence of 
\begin{equation}
\{\sqrt{n}(\widehat{B} - B^{\circ}),\log(\dd P_{\circ,n}/\dd P_{\star,n}^K)\},
\notag
\end{equation} 
under $(P_{\star}^K)_{K}$ as $n,K \to \infty$. By the general version of Le Cam{'}s third lemma that we used in Section~\ref{sec::shifting_back} (see e.g.~\citet[Theorem 6.6, p.~90]{vanderVaart1998} or \citet[Lemma V.1.13, p.~289]{jacod2003limit}), one finds that
\begin{equation}
\sqrt{n}(\widehat{B} - B^{\circ}) \Rightarrow  c + \Uscr,
\notag
\end{equation}
under $P_{\circ}$, provided $1/K \propto 1/\sqrt{n}$ or smaller, where $c$ is the function 
\begin{equation}
c_t = \E\,\int_0^t G(s)^{-1}z y(s;Z)Z^{\prime}\phi(s) \,\dd s\, Z^{\prime}G(s)^{-1},
\notag
\end{equation}
and $\Uscr = (\Uscr_1,\ldots,\Uscr_q)^{\prime}$ is a mean zero Gaussian martingale with 
\begin{equation}
\langle \Uscr,\Uscr\rangle_t = \E\,\int_0^{t}G(s)^{-1}Z y(s;Z)Z^{\prime}\beta^{\circ}(s) \,\dd s\, Z^{\prime}G(s)^{-1}.
\notag
\end{equation}   
If $1/K \to 0$ faster than $1/\sqrt{n}$, then $c_t$ is constant and equal to zero and we are back to the limit process of Proposition~\ref{theorem::clt1}.

\subsection{Nonparametric likelihood theory}\label{subapp::spurver}
The asymptotic theory for the proportional hazards cure model as developed by~\citet{fang2005maximum} and by~\citet{lu2008maximum} draw on the work of \citet{murphy1994consistency,murphy1995asymptotic} for the gamma frailty model. They all use a theorem due to \citet[Theorem 3.3.1, p.~310]{wellner1996weak}, also stated in~\citet[Theorem 19.26, p. 281]{vanderVaart1998}, and in~\citet[Theorem~2, p.~187]{murphy1995asymptotic}. It appears evident that this theorem could have been applied to reach the same result as we reach in Theorem~\ref{theorem::process_clt_II} (for the cure model) of this technical report. Such a theorem would allow one to circumvent going via parametric models, likelihood ratios, and contiguity, but would come with its own technical costs. In this last section we sketch how one may use nonparametric likelihood techniques, and Theorem 3.3.1 of~\citet[p.~310]{wellner1996weak} in particular, for the Aalen linear hazard estimator $\widetilde{B}(s)$ of \eqref{eq::standard.est}. In other words, we aim to reproduce the conclusions of Theorem~\ref{theorem::clt1} by other means. The remainder of this section is inspired by, and follows closely~\citet[Example 3, p.~119]{gill1989non} and~\citet[Example 2, p.~496]{breslow2015z}, who exemplify these techniques using the Nelson--Aalen estimator.    

First some notation. For independent variables $X_1,\ldots,X_n$ with distribution $P$, write $P f = \E\,f(X) = \int f\,\dd P$ and $P_n f = n^{-1}\sum_{i=1}^n f(X_i)$, for some real function $f$. A $Z$-estimator $\widehat{\theta}_n$ is the solution to a set of estimating equations $P_{n}\psi_{\theta,h} = 0$, where $h$ belongs to some set $H$. In a parametric setting $H$ is a finite index set $\{1,\ldots,p\}$, with $p$ the dimension of $\theta$; in the nonparametric setting $H$ indexes an infinite number of estimating equations.       

We have $n$ independent and identically distributed triplets of survival data $X_i = (T_i,\delta_i,Z_i)$ observed over a finite interval $[0,\tau]$; the counting process $N(t) = \delta I\{T \leq t\}$ has the additive hazard $\alpha(t) = z^{\prime}\beta(t)$ of \eqref{eq::linhazard1}; and $B_{l}(t) = \int_0^t\beta_l(s)\,\dd s$ for $l = 1,\ldots,q$. Denote the parameter space in which these $B_{1}(t),\ldots,B_{q}(s)$ live by $\Theta$. Assume that $Z_i$ and $\beta(t)$ are such that $\alpha(t)$ is positive and uniformly bounded over $[0,\tau]$, and that $\E\, [Y(s) ZZ^{\prime}]$ is positive definite for all $s\in[0,\tau]$. Define the one-dimensional parametric submodels $\beta_{\eta}(t) = \beta(t) + \eta h(t)$. Let $\text{BV}[0,\tau]$ be the set of all bounded variation functions on $[0,\tau]$. Equip this space with the norm $\norm{h}_{H} = V_0^{\tau}h + |h(0)|$, where $V_a^{b}h$ is the total variation of $h$ over $[a,b]$,\footnote{The total variation of a real function $h$ on $[a,b]$ is $V_{a}^{b}h = \sup\sum_{j=1}^{K}|h(v_{j}) - h(v_{j-1})|$, where the supremum is over all partitions $a = v_{0}<v_1 < \cdots < v_{K-1}< v_{K}= b$ of $[a,b]$. By the triangle inequality $V_0^{\tau}(h+ f) \leq V_0^{\tau}h+ V_0^{\tau}f$; that $\norm{ah}_H = |a|\norm{h}_H$ for constant $a$ is clear; and the addition of $|h(0)|$ ensures that $\norm{h}_H = 0$ implies $h = 0$, so $\norm{h}_H$ is a norm.} and let $H = \{h \in \text{BV}[0,\tau] \colon \norm{h} \leq 2\}$, and denote by $H^q$ the product space $H\times \cdots \times H$. Let $h = (h_1,\ldots,h_q)^{\prime} \in H^q$ and consider the estimating equations $P_n\psi_{B,h}  = 0$, where $\psi_{B,h}$ is the real-valued function given by
\begin{equation}
\psi_{B,h}(X)  = \int_0^{\tau} h(s)^{\prime} Z\{\dd N(s) - Y(s) Z^{\prime}\,\dd B(s)  \}. 
\label{eq::esteq2} 
\end{equation} 
We get \eqref{eq::esteq2} by following the same procedure as in \citet[p.~503]{mckeague1994partly}. Suppose that we have established that the sequence of roots $\widehat{B}_n$ of $P_n\psi_{B,h}  = 0$ is consistent for the true value $B^{\circ}$. We must first show that $\sqrt{n}(P_n \psi_{B^{\circ}} - P\psi_{B^{\circ}})$ converges to a tight Gaussian process in $\ell^{\infty}(H^q)$, with $\ell^{\infty}(H^q)$ the space of bounded real-valued functions on $H^q$ under the supremum norm $\norm{g} = \sup_{h\in H^q}|g(h)|$. In the case of \eqref{eq::esteq2} this may be done by arguing that $Zh$ and $\int h^{\prime} Y ZZ^{\prime}\,\dd B$ form Donsker classes since they are both of bounded variation (see Example 19.11 in \citet{vanderVaart1998}), and that sums of Donsker classes are Donsker. We then use weak convergence results suitable for doubly indexed martingales due to \citet{nishiyama2000weak}. By our choice of $H^q$, the conditions of \citet[Theorem 3.2, p. 697]{nishiyama2000weak} are satisfied, implying that $\sqrt{n}(P_n \psi_{B^{\circ}} - P\psi_{B^{\circ}})$ converges weakly to a Gaussian process in $\ell^{\infty}(H^q)$. 
The next step is to show that the map $B\mapsto P \psi_B$ is Fr{\'e}chet-differentiable at $B^{\circ}$, with a derivative $V \colon \Theta \to \ell^{\infty}(H^q)$ that has a continuous inverse on its range. We have that
\begin{equation}
P \psi_{B,h} = \E\,\int_0^{\tau}h^{\prime} Z \big(dN - Y Z^{\prime}\,\dd B\big) 
= - \int_0^{\tau} h^{\prime}\, \E\,[\,Y Z Z^{\prime}]\big(\dd B - \dd B^{\circ} \big).
\notag
\end{equation}
Since the map $B\mapsto P\psi_B$ is linear in $B$, that is 
\begin{equation}
V(B - B^*)h \coloneqq P\psi_{B-B^*,h} = P\psi_{B,h} - P\psi_{B^*,h}
=  - \int_0^{\tau} h^{\prime}\,\E\,[\,Y Z Z^{\prime}]\big(\dd B - \dd B^* \big),
\notag
\end{equation}
this difference equals the Fr{\'e}chet-derivative $V(B - B^{*})$. For a map $\xi$ in the range of $V$ (i.e.,~in a subset of $\ell^{\infty}(H^q)$) given by $\xi h = \int_0^{\tau} h^{\prime} \,\dd \xi$, the inverse map is thus 
\begin{equation}
V^{-1} (\xi) h = - \int_0^{\tau} h^{\prime}\,\E\,[YZ Z^{\prime}]^{-1} \,\dd \xi.
\notag
\end{equation} 
The theorem mentioned at the beginning of this section then gives that \citep[Theorem 3.3.1, p.~310]{wellner1996weak},
\begin{equation}
\sqrt{n} (\widehat{B}_n - B^{\circ})h = - V^{-1} \sqrt{n}\,P_n \psi_{B,h} + o_p(1) \Rightarrow G(V^{-1}\psi_{B^{\circ},h}), 
\notag
\end{equation}
with $G$ a Gaussian process on $\ell^{\infty}(H^q)$. We remark that we tacitly have identified the parameter space with a subset of $\ell^{\infty}(H^q)$, consisting of elements $B h = \int_0^{\tau}h^{\prime} \,\dd B$, that is, the functions in the original parameter space $\Theta$ indexes this auxiliary parameter space. Also note that
\begin{equation}
V^{-1}\psi_{B^{\circ},h} = \int_0^{\tau} h^{\prime} \E\,[YZ Z^{\prime}]^{-1} \dd\psi_{B^{\circ}}
= \int_0^{\tau} h^{\prime} \E\,[YZ Z^{\prime}]^{-1} Z \,\dd M.
\notag
\end{equation}
Let $h_t(s) = (I\{s \leq t\},\ldots,I\{s \leq t\})^{\prime}$, which is in $H^q$. One then finds that 
\begin{equation}
\sqrt{n} (\widehat{B}_n - B^{\circ})h_t =    \sum_{j=1}^q\sqrt{n} (\widehat{B}_{n,j}(t) - B_{j}^{\circ}(t)),
\notag
\end{equation} 
converges to a mean zero Gaussian process (indexed by time!) with covariance function 
\begin{equation}
\begin{split}
P\, \big(V^{-1}\psi_{B,h_t}V^{-1}  \psi_{B,h_s} \big)   &=  
P\,\big(\int_0^{\tau} h_t^{\prime}\E\,[YZ Z^{\prime}]^{-1} Z \dd M\int_0^{\tau} h_s^{\prime}  \E\,[YZ Z^{\prime}]^{-1} Z \,\dd M \big)\\
& = P \int_0^{\tau} h_t^{\prime} \E\,[YZ Z^{\prime}]^{-1} ZZ^{\prime} \,\dd\langle M,M\rangle \E\,[YZ Z^{\prime}]^{-1} h_s\\
& = \int_0^{\tau} h_t^{\prime} \,\E\,[YZ Z^{\prime}]^{-1} \E\,[ZZ^{\prime}Z^{\prime}]\,\dd B^{\circ} \,\E\,[YZ Z^{\prime}]^{-1} h_s
\end{split}
\notag
\end{equation} 
Indicator functions in the right places give the various elements of the covariance matrix, for example, $h_t(s) = (I\{s \leq t\},0,\ldots,0)^{\prime}$ gives the limiting variance of the first element of $\sqrt{n}(\widehat{B}_{n} - B^{\circ})$ and so on. From $P\, \big(V^{-1}\psi_{B,h_t}V^{-1}  \psi_{B,h_s} \big)$ above, we see that $\widehat{B}_n$ has the same limiting distribution as the estimator $\widetilde{B}$ in \eqref{eq::standard.est}. It is in fact the same estimator. 

\bibliography{refs_additiveCure}
\bibliographystyle{apalike}

\end{document}